\tikzset{->-/.style={decoration={  markings,  mark=at position #1 with
    {\arrow{>}}},postaction={decorate}}}
\tikzset{-<-/.style={decoration={  markings,  mark=at position #1 with
    {\arrow{<}}},postaction={decorate}}}
\theoremstyle{plain}
\newtheorem{theorem}{Theorem}[section]
\newtheorem*{thm}{Theorem~1}
\newtheorem{lemma}[theorem]{Lemma}
\newtheorem{corollary}[theorem]{Corollary}
\newtheorem{proposition}[theorem]{Proposition}
\newtheorem{conjecture}[theorem]{Conjecture}
\theoremstyle{definition}
\newtheorem{definition}[theorem]{Definition}
\newtheorem{example}[theorem]{Example}
\newtheorem{remark}[theorem]{Remark}
\newtheorem{notations}[theorem]{Notations}
\numberwithin{equation}{section}
\def\hua{\mathcal}
\def\kong{\mathbb}
\def\<{\langle}
\def\>{\rangle}
\def\NN{\mathbb{N}}
\def\ZZ{\mathbb{Z}}
\def\Aut{\operatorname{Aut}}
\def\Sim{\operatorname{Sim}}
\def\Hom{\operatorname{Hom}}
\def\End{\operatorname{End}}
\def\Ext{\operatorname{Ext}}
\def\Stab{\operatorname{Stab}}
\def\Stap{\operatorname{Stab}^\circ}
\def\diff{\operatorname{d}}
\def\Br{\operatorname{Br}}
\def\deg{\operatorname{deg}}
\newcommand{\h}{\operatorname{\hua{H}}}            
\renewcommand{\k}{\mathbf{k}}
\renewcommand{\mod}{\operatorname{mod}}
\newcommand{\tilt}[3]{{#1}^{#2}_{#3}}
\newcommand{\Cone}{\operatorname{Cone}}
\def\numbers{\begin{enumerate}[label=\arabic*{$^\circ$}.]}
\def\ends{\end{enumerate}}
\newcommand{\EG}{\operatorname{EG}}       
\newcommand{\EGp}{\operatorname{EG}^\circ}       
\newcommand{\SEGp}{\operatorname{SEG}^\circ}       
\newcommand{\C}{\hua{C}}
\newcommand{\CEG}[2]{\operatorname{CEG}_{#1}(#2)}             
\newcommand{\D}{\operatorname{\hua{D}}}
\newcommand{\per}{\operatorname{per}}
\newcommand\Sph{\operatorname{Sph}}
\def\zero{\hua{H}_\Gamma}
\newcommand{\Tri}{\bigtriangleup}
\def\arrow{red}
\def\surf{\mathbf{S}}                       
\newcommand{\ST}{\operatorname{ST}}        
\newcommand{\BT}{\operatorname{BT}}        
\newcommand{\MCG}{\operatorname{MCG}}
\newcommand{\Int}{\operatorname{Int}}
\newcommand\Dehn[1]{\operatorname{D}_{#1}}
\def\TT{\mathbf{T}}
\def\T{\kong{T}}
\def\M{\mathbf{M}}
\newcommand{\AT}{\operatorname{AT}}        
\def\RHom{\operatorname{RHom}}
\def\ee{\operatorname{\mathfrak{E}}}
\def\EE{\operatorname{\hua{E}}}
\def\surfo{{\mathbf{S}}_\Tri}
\def\cA{\operatorname{CA}}
\def\CA{\overline{\cA}}
\def\oAp{\operatorname{OA}^\circ}
\newcommand{\Q}[1]{\mathcal{Q}(#1)}
\newcommand\Bt[1]{\operatorname{B}_{#1}}
\newcommand\bt[1]{\operatorname{B}_{#1}^{-1}}
\def\ff{\operatorname{\mathrel{\Big|}}}
\newcommand{\MMCG}{\operatorname{MMCG}}
\newcommand{\Quad}{\operatorname{Quad}}
\newcommand{\Conf}{\operatorname{Conf}}
\title[Decorated marked surfaces]
{Decorated marked surfaces:\\ spherical twists versus braid twists}
\author{Yu Qiu}
\address{Yu Qiu, Institutt for matematiske fag, NTNU, N-7491 Trondheim, Norway.}
\email{Yu.Qiu@Bath.edu}
\date{\today}
\begin{document}

\begin{abstract}
    We are interested in the 3-Calabi-Yau categories $\D$ arising from quivers with potential
    associated to a triangulated marked surface $\surf$ (without punctures).
    We prove that the spherical twist group $\ST$ of $\D$ 
    is isomorphic to a subgroup (generated by braid twists) 
    of the mapping class group of the decorated marked surface $\surfo$.
    Here $\surfo$ is the surface obtained from $\surf$
    by decorating with a set of points,
    where the number of points equals the number of triangles in any triangulations
    of $\surf$.
    For instance, when $\surf$ is an annulus,
    the result implies that the corresponding spaces of stability conditions on $\D$
    are contractible.

    \vskip .3cm
    {\parindent =0pt
    \it Key words:} Calabi-Yau categories, spherical twists, quivers with potential,
    braid group, stability conditions

\end{abstract}
\maketitle
\tableofcontents\addtocontents{toc}{\setcounter{tocdepth}{1}}


\section{Introduction}

\subsection{Calabi-Yau~(CY) categories from mirror symmetry}
We are interested in a class of 3-Calabi-Yau categories $\D$
arising from (homological) mirror symmetry.
These 3-CY categories are not only interesting in mathematics (\cite{KS} and \cite{ST}),
but also in string theory (\cite{GMN}, cf. \cite{BS}).
On the symplectic geometry side, the category $\D$ (of type A) was first studied
by Khovanov-Seidel \cite{KS}.
They showed that there is a faithful braid group action on $\D$.
Moreover, when realizing $\D$ as the subcategory of the derived Fukaya category of
the Milnor fibre of a simple singularities of type A,
such a braid group is generated by the (higher) Dehn twists along certain Lagrangian spheres.
On the algebraic geometry side,
Seidel-Thomas \cite{ST} studied the mirror counterpart of \cite{KS} (also in type A).
They showed that $\D$ can be realized as a subcategory of the bounded derived category of
coherent sheaves of the mirror variety with a faithful braid group action.
Recently, Smith \cite{S} showed that if $\D$ is coming from triangulations of marked surfaces $\surf$,
then it also can be embedded into some derived Fukaya category.
This class of cases are the ones we will study.
Our focus is on the spherical twist group $\ST\subset\Aut\D$,
a subgroup of the auto-equivalence group of $\D$
generated by Khovanov-Seidel-Thomas~(KST) spherical twists.
The aim is to generalize KST's result,
that $\ST$ is `faithful',
in the sense that $\ST$ is isomorphic to the classical (type A) braid group
(and in general, isomorphic to a subgroup of a certain mapping class group).
We need to restrict ourselves in the case when marked surfaces are unpunctured.
In the twin paper \cite{KQ1}, we will make an effort to attack the problem when the marked surfaces are punctured.

Note that the spherical twist group $\ST$
acts freely on the space $\Stap\D$ of Bridgeland's stability condition of $\D$.
This is one of our main motivations to study such a group.
In fact, Bridgeland-Smith~(BS) \cite{BS} recently
showed that the quotient (orbifold) $\Stap\D/\Aut^\circ$ is isomorphic to
the moduli space $\Quad_\heartsuit(\surf)$ of meromorphic quadratic differentials
with simple zeroes on the marked surfaces $\surf$,
where $\Aut^\circ\D$ is the extension of the (tagged) mapping class group of $\surf$ on top of $\ST$.
And one would expect that the faithfulness of spherical twist group actions will imply
the simply connectedness of $\Stap\D$.
For instance, this implication holds for the (3-CY) Dynkin case (see \cite{Q2});
also, such faithfulness (and its implication of simply connectedness)
was proved by Brav-Thomas \cite{BT} for the 2-CY Dynkin case
and by Ishii-Ueda-Uehara \cite{IUU} for the 2-CY affine $\widetilde{A}$ case.

Our main result says that $\ST$ is isomorphic to a subgroup of
the mapping class group of some surface.
As an example, we will show the contractibility of the corresponding $\Stap\D$ in this paper.
In the sequel,
we will prove that this result indeed
implies the simply connectedness of $\Stap\D$ for any unpuncutred marked surface $\surf$.

\subsection{Quivers with potential and categorification of cluster algebras}
Quiver mutation was invented by Fomin-Zelevinsky~(FZ) around 2000,
as the combinatorial aspect of cluster algebras.
Later, mutation was developed by Derksen-Weyman-Zelevinsky~(DWZ) for quivers with potential.

The first (additive) categorification of cluster algebras (with certain associated acyclic quivers)
was due to Buan-Marsh-Reineke-Reiten-Todorov,
via representations of the corresponding quivers.
Amiot introduced the generalized cluster categories via Ginzburg dg algebras
for quivers with potential.
In her construction, the cluster category $\C(\Gamma)$ is defined by the following
short exact sequence of triangulated categories
\begin{equation}\label{SES}
    0 \to \D_{fd}(\Gamma) \to \per\Gamma  \xrightarrow{\pi} \C(\Gamma) \to 0,
\end{equation}
where $\Gamma=\Gamma(Q,W)$ is the Ginzburg dg algebra of the quiver with potential and
$\per\Gamma$ (resp. $\D_{fd}(\Gamma)$) are the perfect (resp. finite-dimensional)
derived category of $\Gamma$.
Here, $\D_{fd}(\Gamma)$ is the 3-CY category we mentioned above
and it also provides a categorification for cluster algebras.

There is an exchange graph associated to each of the categories in \eqref{SES}, namely:
\begin{itemize}
  \item the reachable hearts/t-structures in $\D_{fd}(\Gamma)$ as vertices
  and simple tilting as edges for the exchange graph $\EGp(\D_{fd}(\Gamma))$;
  \item the reachable silting sets in $\per\Gamma$ as vertices
  and mutation as edges for the silting exchange graph $\SEGp(\per(\Gamma))$;
  \item the cluster tilting sets in $\C(\Gamma)$ as vertices
  and mutation as edges for the cluster exchange graph $\CEG{}{\C(\Gamma)}$.
\end{itemize}
They play a crucial role in categorifying cluster algebras,
understanding quantum dilogarithm identities and computing stability conditions.
By simple-projective duality,
there is a canonical isomorphism between the first two graphs.
Moreover, they are coverings of the third (cf. \cite{KQ})
by the spherical twist group action we mentioned above.

\subsection{Triangulations of marked surfaces}
A geometric aspect of cluster theory was explored by Fomin-Shapiro-Thurston~(FST).
They constructed a quiver $Q_\TT$ for each (tagged) triangulation $\TT$
of a marked surface $\surf$ and showed that
flipping triangulations corresponds to FZ mutation of quivers.
Here, the marked surface $\surf$ is a surface with marked points on its boundaries and
punctures in its interior.
Further, Labardini-Fragoso gave a rigid potential $W_\TT$ for each FST quiver $Q_\TT$,
which is the unique `good' (rigid, to be precise) one (cf. \cite{GLFS}), that is compatible with DWZ mutation.
Then one can construct the Ginzburg dg algebra $\Gamma_\TT=\Gamma(Q_\TT,W_\TT)$
and the associated categories, as in \eqref{SES}.

In this paper, we will deal the case when $\surf$ is unpunctured
and introduce a new surface from $\surf$ by decorating it with a set $\Tri$ of points
as a topological model for these categories.
The number of points in $\Tri$ equals the number of triangles in
any triangulation of $\surf$.
This decorating idea already appeared in various contexts
(e.g. Krammer \cite{Kr} and Gaiotto-Moore-Neitzke \cite{GMN}).
In the theory of BS (\cite{BS}),
these decorating points are simple zeroes of quadratic differentials (cf. Figure~\ref{fig:WH});
the boundary components of $\surf$ are the real blow-up of higher order ($\geq 3$) poles of quadratic differentials.
Further, when considering the mapping class group of $\surfo$, these decorating points
are serving as punctures in topology; however, we reserve the terminology `punctures'
for the FST setting of marked surfaces.

Denote such a surface by $\surfo$ and call it the \emph{decorated marked surface}.
A triangulation of $\surfo$ is a maximal collection of simple open arcs that divides
$\surfo$ into triangles such that each triangle contains exactly one decorating point.
One important feature of $\surfo$ is that flipping a triangulation has directions (cf. \S~\ref{sec:Kra}).
Then we obtain a list of correspondences, as shown in Table~\ref{table}
(some of the correspondences will be given in the second part of the paper).
Simple closed arcs, i.e. the simple arcs connecting different decorating points, play a crucial role
in the construction/proof of these correspondences.
In the theory of BS, they should correspond to stable objects
(w.r.t. some stability conditions)
and saddle connections (w.r.t. some quadratic differentials).

\begin{table}[ht]
\caption{Correspondences}
\label{table}
\setlength{\extrarowheight}{2pt}
\begin{tabular}{ccc}
\\Topological side&&Categorical side\\[1ex]
\hline
Braid twists&$\xymatrix@C=2pc{\ar[r]^{\cong}&}$&Spherical twists\\
\hline\hline
Simple closed arcs in $\surfo$&$\xymatrix@C=2pc{\ar[r]^{\text{\tiny{1-1}}}_{\text{\tiny{up to $[1]$}}}&}$
&Spherical obj. in $\D_{fd}(\Gamma_\TT)$\\
\footnotesize{Dual Tri. with Whitehead moves} &&
\footnotesize{Hearts with simple tilting}
\\ \hline\hline
\text{\tiny{graph dual}}$\;\;\,\mathrel{\Bigg\updownarrow}\qquad\quad\;\;$
&&$\;\;\;\qquad\qquad\mathrel{\Bigg\updownarrow}\;\;\;$\text{\tiny{sim.-proj. dual}}
\\ \hline\hline
Reachable open arcs in $\surfo$&$\xymatrix@C=2pc{\ar[r]^{\text{\tiny{1-1}}}&}$&
Reachable ind. in $\per\Gamma_\TT$\\
\footnotesize{Triangulations with flips}&&
\footnotesize{Silting with mutation}\\
\hline\hline
\text{\tiny{forgetful map}}
$\begin{smallmatrix}\\\surfo\\\mathrel{\bigg\downarrow}\\ \surf\end{smallmatrix}
\qquad\qquad$
&&
\text{\tiny{}}$\qquad\qquad
\begin{smallmatrix}\\\per\Gamma_\TT\\\mathrel{\bigg\downarrow}\\ \C_\surf\end{smallmatrix}$
\text{\tiny{quotient map}}\\[4ex]
\hline\hline
Open arcs in $\surf$&$\xymatrix@C=2pc{\ar[r]^{\text{\tiny{1-1}}}&}$&
Rigid ind. in $\C_\surf$\\
\footnotesize{Triangulations with flips}&&
\footnotesize{Cluster tilting with mutation}\\
\hline\hline
\end{tabular}
\end{table}

\subsection{The project: decorated marked surfaces}
This paper initiates a project: \textbf{DMS}=decorated marked surfaces.
In the first paper, we prove the following theorem.
\begin{thm}
Suppose $\surf$ is a marked surface without punctures and $\TT$ a triangulation of
its decorated version $\surfo$.
There is a canonical isomorphism
\begin{gather}\label{1}
    \iota\colon\BT(\TT)\to\ST(\Gamma_\TT),
\end{gather}
sending the standard generators (i.e. braid twists of the closed arcs $\eta$ in the dual $\TT^*$)
to the standard generators (i.e. spherical twists of the corresponding spherical objects $X_\eta$).
\end{thm}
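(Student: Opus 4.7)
\medskip
\noindent\textbf{Proof proposal.}

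The plan is to build $\iota$ on generators using the string model of \S~\ref{sec:5}, verify that the defining relations of $\BT(\TT)$ are sent to genuine relations in $\ST(\Gamma_\TT)$, and then separately establish surjectivity and injectivity.

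First, I would define $\iota$ on generators by sending the braid twist $\Bt{\eta}$ along a closed arc $\eta$ in the dual triangulation $\TT^\ast$ to the Seidel--Thomas spherical twist $\Phi_{X_\eta}$, where $X_\eta$ is the spherical object in $\D_{fd}(\Gamma_\TT)$ produced by the string construction of \S~\ref{sec:5}. To show this is well-defined as a group homomorphism, I would translate the defining relations of $\BT(\TT)$ (which, by \S~\ref{sec:4}, come in two flavours: commutation of $\Bt{\eta}$ and $\Bt{\eta'}$ when the closed arcs are disjoint, and the length-three braid relation when they share exactly one endpoint) into statements about the dimensions of $\Hom$ and $\Ext^1$ groups between the associated spherical objects. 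Using the string/intersection dictionary from \S~\ref{sec:5}, disjoint closed arcs correspond to an $\Ext^\bullet$-orthogonal pair (so the associated twists commute), while closed arcs meeting at one decorating point correspond to a pair with $\dim\Ext^1=1$ forming an $A_2$-configuration of spherical objects, for which Seidel--Thomas already guarantees the braid relation.

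Next, for surjectivity I would pick a reachable heart $\h\subset\D_{fd}(\Gamma_\TT)$ and use the table/correspondence recalled in the introduction: the simples of $\h$ are precisely of the form $X_\eta$ for $\eta$ ranging over the closed arcs in some $\TT^\ast$, and the spherical twists along the simples of reachable hearts generate $\ST(\Gamma_\TT)$ by construction. Hence the image of $\iota$ contains a generating set of $\ST(\Gamma_\TT)$.

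The main obstacle, as usual for faithfulness statements of this type, is injectivity. My plan is to compare natural actions on combinatorial objects on the two sides. The braid twist group $\BT(\TT)$ acts on the set of (isotopy classes of) simple closed arcs in $\surfo$, which should correspond bijectively (up to shift) to the reachable spherical objects in $\D_{fd}(\Gamma_\TT)$, and the $\ST(\Gamma_\TT)$-action on the latter is compatible with $\iota$. So if $b\in\BT(\TT)$ lies in $\ker\iota$, then $b$ fixes every reachable closed arc up to isotopy; equivalently, $b$ acts trivially on the (reachable part of the) arc complex or equivalently on the exchange graph $\EGp(\D_{fd}(\Gamma_\TT))\cong\SEGp(\per\Gamma_\TT)$. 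I would then upgrade this to show $b$ fixes a full triangulation pointwise (using enough disjoint closed arcs to pin down all decorating points and boundary data), and conclude via the topological fact that a braid that fixes a triangulation is trivial in $\BT(\TT)$. Making this last step rigorous — in particular identifying $\BT(\TT)$ as a subgroup of $\MCG(\surfo)$ acting faithfully on the reachable arcs — will be the technical core of the argument, and the preparation in \S~\ref{sec:4} (together with the key Proposition~\ref{pp:key} whose proof is deferred to \S~\ref{app:B}) is what makes this comparison possible.
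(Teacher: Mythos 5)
Your proposal has two genuine gaps, both at points you identify as the crux. First, the construction of $\iota$ itself does not work as described. $\BT(\TT)$ is \emph{defined} as a subgroup of $\MCG(\surfo)$ generated by the twists $\Bt{\eta}$, $\eta\in\TT^*$; it is not given by the Artin-type presentation you assume, and \S~\ref{sec:4} does not supply one (indeed the paper only conjectures presentations of this kind, and even then with extra cyclic relations coming from the $3$-cycles of the potential, cf.\ \S~\ref{app:ATG}). Verifying commutation and length-three braid relations among the $\phi_{X_\eta}$ therefore only produces a homomorphism out of the Artin group $\Br(Q_\TT)$, which maps \emph{onto} $\BT(\TT)$ with a possibly nontrivial kernel; it does not define a map out of $\BT(\TT)$. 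What one must show is that \emph{every} relation holding in $\MCG(\surfo)$ among the generators is respected, and this is exactly what the paper does instead: it proves the intertwining $\widetilde{X}(b(\eta))=\iota(b)(\widetilde{X}(\eta))$ (Lemma~\ref{lem:actions}, Corollary~\ref{cor:actions}), so that a word $b$ in the $\Bt{s_i}$ with $b=1$ in $\MCG(\surfo)$ forces the corresponding product $\phi$ of spherical twists to fix every $S_i$ up to shift, hence $\phi=[t]$, and then $\phi=1$ by the (non)triviality of $Z_0^{\ST}=\ST(\Gamma_0)\cap\ZZ[1]$ (Proposition~\ref{pp:iota}). Your relation-checking step cannot be repaired without proving such a presentation of $\BT(\TT)$, which is an open problem here.

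Second, your injectivity step contains a false implication: from ``$b$ fixes every closed arc up to isotopy'' you cannot conclude that $b$ fixes an open-arc triangulation pointwise. Closed arcs have endpoints at decorating points and can be isotoped off a collar of $\partial\surfo$, so Dehn twists along boundary-parallel curves fix \emph{all} closed arcs while acting nontrivially on open arcs; fixing all closed arcs only places $b$ in the subgroup generated by boundary Dehn twists. The paper's route is different and is the one that closes this gap: $b(\eta)=\eta$ for all $\eta$ gives $\Bt{b(\eta)}=b\Bt{\eta}b^{-1}=\Bt{\eta}$, so $b$ lies in the center $Z_0^{\BT}$ of $\BT(\surfo)$; then Lemma~\ref{lem:cut} (cutting along $\TT_0^*$) and $\BT(\surfo)\subset\ker F_*$ show $Z_0^{\BT}=1$ unless $\surfo$ is a polygon or an annulus (Proposition~\ref{pp:trivial}), with the polygon case quoted from \cite{KS,ST} and the excluded annulus/once-marked-torus cases treated separately in \S~\ref{sec:Kro}. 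Finally, note that the string model and Proposition~\ref{pp:key} are only set up for an initial triangulation whose quiver has no double arrows (Lemma~\ref{lem:ini} is used in Lemma~\ref{lem:dcp} and Lemma~\ref{lem:wd}), so for a general $\TT$ one must transfer the statement from such a $\TT_0$ via the Keller--Yang equivalences and the conjugation formulae for generators under flips (Remark~\ref{rem:general}); your proposal applies the string model to arbitrary $\TT$ without this reduction.
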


The topics/plan for the sequels are:
\begin{description}
\item[DMS (Part B)] We give a geometric realization of
    silting objects in $\per(\Gamma_\TT)$, simple-projective duality for $\Gamma_\TT$ and
    Amoit's quotient $\pi$ in \eqref{SES} that defines cluster categories.
\item[DMS II] We prove Conjecture~\ref{con0} and Conjecture~\ref{con1},
that the dimensions of homomorphisms between objects in $\D(\Gamma)$ equals
the intersection numbers between the corresponding arcs in $\surfo$.
This is a joint work with Yu~Zhou.
\item[DMS III] We show that there is a unique canonical way to identify
$\D(\Gamma_{\TT})$, for any triangulation $\TT$ in $\EGp(\surfo)$.
Thus, one can associate a unique 3-Calabi-Yau category $\D_{fd}(\surfo)$ to $\surfo$.
As an application, we show that the spherical twist group $\ST(\surfo)$ acts faithfully on
the corresponding space $\Stap\D_{fd}(\surfo)$ of stability conditions.
This is a joint work with Aslak~Buan.
\end{description}
We will prove the simply connectedness of $\Stap\D_{fd}(\surfo)$
by calculating the fundamental group of the space $\Quad(\surf)$ of quadratic differentials
in \cite{KQ1}.
\subsection*{Acknowledgements}
This work was inspired during joint working with Alastair King on the twin paper \cite{KQ1},
which deals with punctured marked surfaces.
I would like to thank my collaborators mentioned above, as well as
Tom~Bridgeland, Ivan~Smith, Dong~Yang, Idun Reiten and Bernhard~Keller
for inspiring conversations.

\section{Preliminaries}\label{sec:bg}
\subsection{Quivers with potential and Ginzburg algebras}\label{sec:QP}
Fix an algebraically closed field $\k$ and all categories are $\k$-linear.
Denote by $\Gamma=\Gamma(Q,W)$ the \emph{Ginzburg dg algebra (of degree 3)} associated to
a quiver with potential $(Q,W)$,
which is constructed as follows (cf. \cite{KY}):
\begin{itemize}
\item   Let $Q^3$ be the graded quiver whose vertex set is $Q_0$
and whose arrows are:
\begin{itemize}
\item the arrows in $Q_1$ with degree $0$;
\item an arrow $a^*:j\to i$ with degree $-1$ for each arrow $a:i\to j$ in $Q_1$;
\item a loop $e_i^*:i\to i$ with degree $-2$ for each vertex $i$ in $Q_0$.
\end{itemize}
\item The underlying graded algebra of $\Gamma(Q,W)$ is the completion of
the graded path algebra $\k Q^3$ in the category of graded vector spaces
w.r.t. the ideal generated by the arrows of $Q^3$.
\item The differential of $\Gamma(Q,W)$ is the unique continuous linear endomorphism,
homogeneous of degree $1$, which satisfies the Leibniz rule and takes the following values
\begin{itemize}
  \item $\diff a = 0$ for any $a\in Q_1$,
  \item $\diff a^* = \partial_a W$ for any $a\in Q_1$ and
  \item $\diff \sum_{e\in Q_0} e^*=\sum_{a\in Q_1} \, [a,a^*]$.
\end{itemize}
\end{itemize}

\begin{example}\label{ex:1}
Let $Q$ be a 3-cycle with edges $a,b,c$ and the potential $W=abc$.
Then the (graded) quiver $Q^3$ is
\begin{gather}\label{eq:ex1}
\xymatrix{\\Q^3\colon\\\\}
    \xymatrix@R=3pc@C=2.3pc{ &2  \ar@(ur,ul)[]_{f_2}
    \ar@<.8ex>[dr]^{a}\ar@<.2ex>[dl]^{b^*}
      \\ 1\ar@(l,d)[]_{f_1}\ar@<.2ex>[rr]^{c^*}\ar@<.8ex>[ur]^{b}&&
        3\ar@(d,r)[]_{f_3}\ar@<.2ex>[ul]^{a^*}
          \ar@<.8ex>[ll]^{c}
  }
\end{gather}
and the (non-trivial) differentials are
\begin{equation}
\begin{array}{c}
  d(a^*)=bc,\; d(b^*)=ca,\;d(c^*)=ab,\; \\
  d(f_1)=cc^*-b^*b,\;d(f_2)=bb^*-a^*a,\;d(f_1)=aa^*-c^*c.
\end{array}\label{eq:ex}
\end{equation}
\end{example}

In this paper, the quivers with potential we are considering
are \emph{rigid} (and hence \emph{non-degenerated}),
which basically means that they behave nicely under mutation,
in the sense of DWZ.
For details about these notions, see, e.g. \cite{KY} and \cite{GLFS}.

\subsection{The 3-Calabi-Yau categories}\label{sec:DC}
A triangulated category $\D$ is called \emph{$N$-Calabi-Yau} ($N$-CY)
if, for any objects $X,X'$ in $\hua{D}$ we have a natural isomorphism
\begin{gather}\label{eq:serre}
    \mathfrak{S}:\Hom_{\hua{D}}^{\bullet}(X,X')
        \xrightarrow{\sim}\Hom_{\hua{D}}^{\bullet}(X',X)^\vee[N].
\end{gather}
Note that the graded dual of a graded $\k$-vector space
$V=\oplus_{k\in\ZZ} V_k[k]$ is
\[V^\vee=\bigoplus_{k\in\ZZ} V_k^*[-k].\]
Further, an object $S$ is \emph{$N$-spherical} if $\Hom^{\bullet}(S, S)=\k \oplus \k[-N]$
and \eqref{eq:serre} holds functorially for $X=S$ and $X'$ in $\D$.
Denote by $\D_{fd}(\Gamma)$ the finite-dimensional derived category of $\Gamma$.
It is well-known that this is a 3-CY category.
We also know that (see, e.g. \cite{KN}) $\D_{fd}(\Gamma)$ admits a canonical heart $\zero$ generated
by simple $\Gamma$-modules $S_e$, for $e\in Q_0$, each of which is 3-spherical.
Recall that the \emph{twist functor $\phi$ of a spherical object} $S$
is defined by
\begin{gather}\label{eq:phi}
    \phi_S(X)=\Cone\left(S\otimes\Hom^\bullet(S,X)\to X\right)
\end{gather}
with inverse
\[
    \phi_S^{-1}(X)=\Cone\left(X\to S\otimes\Hom^\bullet(X,S)^\vee \right)[-1]
\]
Denote by $\ST(\Gamma)$ the spherical twist group of $\D_{fd}(\Gamma)$
in $\Aut\D_{fd}(\Gamma)$, generated by $\{\phi_{S_e}\mid e\in Q_0\}$.
By \cite[Lemma~2.11]{ST}, we have the formula
\begin{equation}\label{eq:212}
    \phi_{\psi(S)}=\psi\circ\phi_{S}\circ\psi^{-1}
\end{equation}
for any spherical object $S$ and $\psi\in\Aut\D_{fd}(\Gamma)$.

Denote by $\Sph(\Gamma)$ the set of \emph{reachable} spherical objects
in $\D_{fd}(\Gamma)$, that is,
\begin{gather}\label{eq:sph=st}
    \Sph(\Gamma)=\ST(\Gamma)\cdot\Sim\zero,
\end{gather}
where $\Sim\h$ denotes the set of simples of an abelian category $\h$.

We have the following observations.
\begin{itemize}
  \item The the twist functor is well-defined on $\Sph(\Gamma)/[1]$, i.e. $\phi_S=\phi_{S[1]}$.
  \item 
  Clearly, for any $\phi\in\ST(\Gamma)$ and $X\in\Sph(\Gamma)$,
  $\phi(X)$ is still in $\Sph(\Gamma)$.
  \item By \eqref{eq:212}, $\ST(\Gamma)$ is also generated by all $\phi_X$ for $X\in\Sph(\Gamma)$ (cf. \cite{KQ}).
\end{itemize}

\begin{remark}\label{def:iso}
Two elements $\psi$ and $\psi'$ in $\Aut\D_{fd}(\Gamma)$ are \emph{isotopic},
denote by $\psi\sim\psi'$,
if $\psi^{-1}\circ\psi'$ acts trivially on $\Sph(\Gamma)$.
In this paper, we will only consider the auto-equivalences up to isotopy,
i.e. we will consider $\ST(\Gamma)$ as a subgroup of
\begin{gather}\label{eq:Aut0}
    \Aut^\circ\D_{fd}(\Gamma)=\Aut\D_{fd}(\Gamma)/\sim.
\end{gather}
However, we will show in the sequel that: the identity is the only spherical twist
which acts trivially on $\Sph(\Gamma)$ in our case.
\end{remark}

\subsection{Triangulations of marked surfaces}\label{sec:MS}
Throughout the paper,
$\surf$ denotes a \emph{marked surface} without punctures in the sense of \cite{FST},
that is, a connected surface with a fixed orientation and
a finite set $\M$ of marked point on the (non-empty) boundary $\partial\surf$
satisfying that 
each connected component of $\partial\surf$ contains at least one marked point.
Up to homeomorphism, $\surf$ is determined by the following data
\begin{itemize}
\item the genus $g$;
\item the number $|\partial\surf|$ of boundary components;
\item the integer partition of $|\M|$ into $|\partial\surf|$
parts describing the number of marked points
on its boundary.
\end{itemize}
As in \cite[p5]{FST}, we will exclude the case when
there is no triangulation or there is no arcs in the triangulation.
In other words, we require $n\geq1$ in \eqref{eq:n}.

An (open) \emph{arc} in $\surf$ is a curve (up to homotopy) that connects two marked points in $\M$,
which is neither isotopic to a boundary segment nor to a point.
The \emph{intersection number} is defined to be
\[
    \Int(\gamma_1,\gamma_2)=
    \min\{ |\gamma_1'\cap\gamma_2'\cap(\surf-\M)| \ff  \gamma_i\sim\gamma_i' \}.
\]
An \emph{(ideal) triangulation} $\T$ of $\surf$
is a maximal collection of compatible simple arcs.
Here, compatible means any two arcs in $\T$ that do not intersect.
Moreover, it is well-known that any triangulation $\T$ of $\surf$ consists of
\begin{gather}\label{eq:n}
n=6g+3|\partial\surf|+|\M|-6
\end{gather}
(simple) arcs and divides $\surf$ into
\begin{gather}\label{eq:Tri}
    \aleph=\frac{2n+|\M|}{3}
\end{gather}
triangles.
Denote by $\EG(\surf)$ the \emph{exchange graph} of triangulations of $\surf$,
that is, the unoriented graph whose vertices are triangulation of $\surf$
and whose edges correspond to flips (see the lower pictures in Figure~\ref{fig:flip} for a flip).
It is known that $\EG(\surf)$ is connected.
If $\surf$ is an $(n+3)$-gon, then $\EG(\surf)$ is the associahedron of dimension $n$
(cf. Figure~\ref{fig:Pent}).

\begin{figure}
\begin{tikzpicture}[xscale=.4,yscale=.5]
  \foreach \j in {0, 8, 16}
  {\draw[thick](0+\j,0)to(2+\j,0)to(3+\j,2)to(1+\j,3)to(-1+\j,2)to(0+\j,0);}
  \foreach \j in {4, 12}
  {\draw[thick](0+\j,0-6)to(2+\j,0-6)to(3+\j,2-6)to(1+\j,3-6)to(-1+\j,2-6)to(0+\j,0-6);}
  \foreach \j in {0, 8, 16}
  {
    \draw[NavyBlue](0+\j,0)node{\footnotesize{$\bullet$}}
        (2+\j,0)node{\footnotesize{$\bullet$}}(3+\j,2)node{\footnotesize{$\bullet$}}
    (1+\j,3)node{\footnotesize{$\bullet$}}(-1+\j,2)node{\footnotesize{$\bullet$}}(0+\j,0);
  }
  \foreach \j in {4, 12}
  {
    \draw[NavyBlue](0+\j,0-6)node{\footnotesize{$\bullet$}}
        (2+\j,0-6)node{\footnotesize{$\bullet$}}(3+\j,2-6)node{\footnotesize{$\bullet$}}
    (1+\j,3-6)node{\footnotesize{$\bullet$}}(-1+\j,2-6)node{\footnotesize{$\bullet$}}(0+\j,0-6);
  }
\draw[NavyBlue,thick](0,0)to(1,3)to(2,0) (0,4.5)node{};
\draw[NavyBlue,thick](-1+8,2)to(2+8,0)to(1+8,3);
\draw[NavyBlue,thick](2+8+8,0)to(-1+8+8,2)to(3+8+8,2);
\draw[NavyBlue,thick](1+4,3-6)to(0+4,0-6)to(3+4,2-6);
\draw[NavyBlue,thick](0+12,0-6)to(3+12,2-6)to(-1+12,2-6);

\draw(1+4-1.5,1.5)to(1+4+1.5,1.5);
\draw(1+12-1.5,1.5)to(1+12+1.5,1.5);
\draw(1+2-1,1-3+1.5)to(1+2+.5,1-3-1);
\draw(1+6+8-.5,1-3-1)to(1+6+8+1,1-3+1.5);
\draw(1+8-1.5,1.5-6)to(1+8+1.5,1.5-6);
\end{tikzpicture}
\caption{The exchange graph of triangulations of a pentagon}
\label{fig:Pent}
\end{figure}
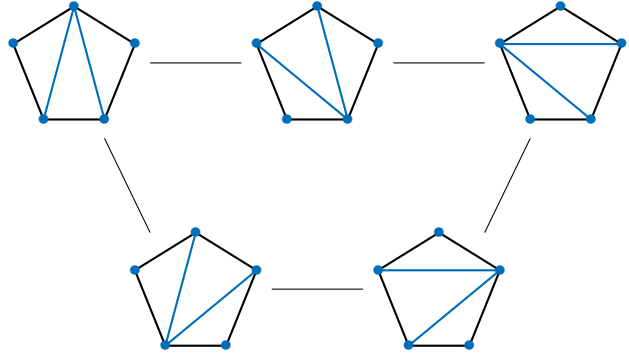

Let $\surf$ be a marked surface and $\T$ a triangulation of $\surf$.
Then there is an associated quiver $Q_\T$ with a potential $W_\T$, constructed as follows
(See, e.g. \cite{GLFS} or \cite{QZ} for the precise definition):
\begin{itemize}
\item the vertices of $Q_\T$ are (indexed by) the arcs in $\T$;
\item for each triangle $T$ in $\T$, there are three arrows
    between the corresponding vertices as shown in Figure~\ref{fig:quiver};
\item these three arrows form a 3-cycle in $Q_\T$ and
$W_\T$ is the sum of all such 3-cycles.
\end{itemize}
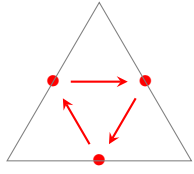
\begin{figure}[ht]\centering
  \begin{tikzpicture}[scale=.7]
  \foreach \j in {1,...,3}  { \draw (120*\j-30:2) coordinate (v\j);}
    \path (v1)--(v2) coordinate[pos=0.5] (x3)
              --(v3) coordinate[pos=0.5] (x1)
              --(v1) coordinate[pos=0.5] (x2);
    \foreach \j in {1,...,3}{\draw (x\j) node[red] (x\j){$\bullet$};}
    \draw[->,>=stealth,red,thick] (x1) to (x3);
    \draw[->,>=stealth,red,thick] (x3) to (x2);
    \draw[->,>=stealth,red,thick] (x2) to (x1);
    \draw[gray,thin] (v1)--(v2)--(v3)--cycle;
  \end{tikzpicture}
\caption{The (sub-)quiver associated to a triangle (with a potential)}
\label{fig:quiver}
\end{figure}

\section{Triangulations of decorated marked surfaces}\label{sec:3}
\subsection{Decorated marked surfaces}
Recall that any triangulation of $\surf$ consists of $\aleph$ triangles,
where $\aleph$ is given by the formula \eqref{eq:Tri}.
\begin{definition}\label{def:arcs}
The \emph{decorated marked surface} $\surfo$ is a marked surface $\surf$ together with
a fixed set $\Tri$ of $\aleph$ `decorating' points (in the interior of $\surf$,
where $\aleph$ is defined in \eqref{eq:Tri}),
which serve as punctures.
Moreover,
\begin{itemize}
\item An \emph{open arc} in $\surfo$ is (the isotopy class of) a curve in $\surfo-\Tri$
that connects two marked points in $\M$, which is neither isotopic to a boundary segment nor to a point.
\item a \emph{closed arc} in $\surfo$ is (the isotopy class of) a curve in $\surfo-\Tri$
that connects different decorating points in $\Tri$.
Denote by $\cA(\surfo)$ the set of simple closed arcs.
\item An \emph{L-arc} $\eta$ in $\surfo$ is (the isotopy class of) a curve in $\surfo-\Tri$
such that its endpoints coincide at a decorating point in $\Tri$ and it is not isotopic to a point.
\item A \emph{general closed arc} in $\surfo$ is
either a closed arc or an L-arc;
denote by $\CA(\surfo)$ the set of simple general closed arcs.
\end{itemize}
\end{definition}
The \emph{intersection numbers} between arcs in $\surfo$ are defined as follows:
\begin{itemize}
\item For an open arc $\gamma$ and any arc $\eta$,
their intersection number is the geometric intersection number in $\surfo-\M$:
\[
    \Int(\gamma,\eta)=\min\{ |\gamma'\cap\eta'\cap(\surfo-\M)|
      \ff \gamma'\sim\gamma,\eta'\sim\eta  \}.
\]
\item For two general closed arcs $\alpha,\beta$ in $\CA(\surfo)$,
their intersection number is an half integer in $\tfrac{1}{2}\ZZ$ and defined as follows
(following \cite{KS}):
\[
    \Int(\alpha,\beta)
    =\tfrac{1}{2}\Int_{\Tri}(\alpha,\beta)+\Int_{\surf-\Tri}(\alpha,\beta),
\]
where
\begin{gather}\label{eq:GIT}
    \Int_{\surfo-\Tri}(\alpha,\beta)=\min\{ |\alpha'\cap\beta'\cap \surfo-\Tri|
        \ff \alpha'\sim\alpha,\beta'\sim\beta \}
\end{gather}
and $$\Int_{\Tri}(\alpha,\beta)=\sum_{Z\in\Tri} |\{t\mid\alpha(t)=Z\}|\cdot|\{r\mid\beta(r)=Z\}|.$$
\end{itemize}
\subsection{Triangulations and flips (after Krammer)}\label{sec:Kra}

\begin{definition}
A \emph{triangulation} $\TT$ of $\surfo$ is a maximal collection of open arcs such that
\begin{itemize}
\item for any $\gamma_1,\gamma_2\in\TT$, $\Int(\gamma_1,\gamma_2)=0$;
\item $\TT$ is compatible with $\Tri$ in the sense that
the open arcs in $\TT$ divide $\surfo$ into $\aleph$ triangles,
each of which contains exactly one point in $\Tri$.
\end{itemize}
Let $\TT$ be a triangulation of $\surfo$ (consisting of $n$ open arcs).
The \emph{dual triangulation} $\TT^*$ of $\TT$ is the collection of $n$ closed arcs
in $\cA(\surfo)$, such that every closed arc only intersects one open arc in $\TT$
and with intersection one.
See the left picture of Figure~\ref{fig:ex0} for an example.
More precisely, for $\gamma$ in $\TT$, the corresponding closed arc in $\TT^*$
is the unique open arc $s$ that is contained in
the quadrilateral $A$ with diagonal $\gamma$,
connecting the two decorating points in $A$ and intersecting $\gamma$ only once.
We will call $s$ and $\gamma$ the dual of each other, w.r.t. $\TT$ (or $\TT^*$),
cf. left picture of Figure~\ref{fig:ex0}.
\end{definition}

There is a canonical map, the \emph{forgetful map} $$F\colon\surfo\to\surf,$$
forgetting the decorating points.
Clearly, $F$ induces a map from the set of open arcs in $\surfo$
to the set of open arcs in $\surf$.
And the image of a triangulation $\TT$ is still a triangulation $\T=F(\TT)$.
The (FST) quiver $Q_\TT$ associated to $\TT$ is defined to be
the (FST) quiver $Q_\T$ associated to $\T=F(\TT)$.
We proceed to introduce the notion of forward/backward flip of triangulations
(after \cite{Kr} and cf. \cite{KQ1}).

\begin{figure}[ht]\centering
\begin{tikzpicture}[scale=.4]
    \path (-135:4) coordinate (v1)
          (-45:4) coordinate (v2)
          (45:4) coordinate (v3);
\draw[NavyBlue,very thick](v1)to(v2)node{$\bullet$}to(v3);
    \path (-135:4) coordinate (v1)
          (45:4) coordinate (v2)
          (135:4) coordinate (v3);
\draw[NavyBlue,very thick](v2)node{$\bullet$}to(v3)node{$\bullet$}to(v1)node{$\bullet$}
(45:1)node[above]{$\gamma$};
\draw[>=stealth,NavyBlue,thick](-135:4)to(45:4);
\draw[red,thick](135:1.333)node{\tiny{$\circ$}}(-45:1.333)node{\tiny{$\circ$}};
\end{tikzpicture}
\begin{tikzpicture}[scale=1.2, rotate=180]
\draw[blue,<-,>=stealth](3-.6,.7)to(3+.6,.7);
\draw(3,.7)node[below,black]{\footnotesize{in $\surfo$}};
\draw[blue](3-.25,.5-.5)rectangle(3+.25,.5);\draw(3,1.5)node{};
\draw[blue,->,>=stealth](3-.25,.5-.5)to(3+.1,.5-.5);
\draw[blue,->,>=stealth](3+.25,.5)to(3-.1,.5);
\end{tikzpicture}
\begin{tikzpicture}[scale=.4];
    \path (-135:4) coordinate (v1)
          (-45:4) coordinate (v2)
          (45:4) coordinate (v3);
\draw[NavyBlue,very thick](v1)to(v2)node{$\bullet$}to(v3)
(45:1)node[above right]{$\gamma^\sharp$};
    \path (-135:4) coordinate (v1)
          (45:4) coordinate (v2)
          (135:4) coordinate (v3);
\draw[NavyBlue,very thick](v2)node{$\bullet$}to(v3)node{$\bullet$}to(v1)node{$\bullet$};
\draw[>=stealth,NavyBlue,thick](135:4).. controls +(-10:2) and +(45:3) ..(0,0)
                             .. controls +(-135:3) and +(170:2) ..(-45:4);
\draw[red,thick](135:1.333)node{\tiny{$\circ$}}(-45:1.333)node{\tiny{$\circ$}};
\end{tikzpicture}

\begin{tikzpicture}[scale=.4]
\draw[thick,>=stealth,->](0,5)to(0,3.6);\draw(0,4.3)node[left]{$^F$};
    \path (-135:4) coordinate (v1)
          (-45:4) coordinate (v2)
          (45:4) coordinate (v3);
\draw[NavyBlue,very thick](v1)to(v2)node{$\bullet$}to(v3);
    \path (-135:4) coordinate (v1)
          (45:4) coordinate (v2)
          (135:4) coordinate (v3);
\draw[NavyBlue,very thick](v2)node{$\bullet$}to(v3)node{$\bullet$}to(v1)node{$\bullet$};
\draw[>=stealth,NavyBlue,thick](-135:4)to(45:4);
\end{tikzpicture}
\begin{tikzpicture}[scale=1.2, rotate=180]
\draw(3,1.5)node{}(3,.5)node[above]{\footnotesize{in $\surf$}};
\draw(3-.6,.5)to(3+.6,.5);;
\end{tikzpicture}
\begin{tikzpicture}[scale=.4]
\draw[thick,>=stealth,->](0,5)to(0,3.6);\draw(0,4.3)node[right]{$^F$};
    \path (-135:4) coordinate (v1)
          (-45:4) coordinate (v2)
          (45:4) coordinate (v3);
\draw[NavyBlue,very thick](v1)to(v2)node{$\bullet$}to(v3);
    \path (-135:4) coordinate (v1)
          (45:4) coordinate (v2)
          (135:4) coordinate (v3);
\draw[NavyBlue,very thick](v2)node{$\bullet$}to(v3)node{$\bullet$}to(v1)node{$\bullet$};
\draw[>=stealth,NavyBlue,thick](135:4)to(-45:4);
\end{tikzpicture}
\caption{The flip}
\label{fig:flip}
\end{figure}
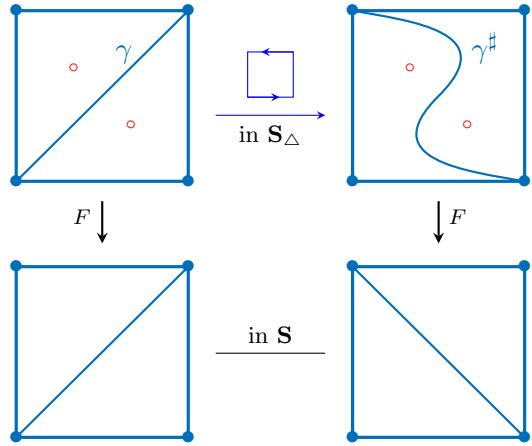

\begin{definition}
Let $\gamma$ be an open arc in a triangulation $\TT$ of $\surfo$.
The arc $\gamma^\sharp=\gamma^\sharp(\TT)$ is the arc obtained from $\gamma$
by anticlockwise moving its endpoints
along the quadrilateral in $\TT$ whose diagonal is $\gamma$
(cf. upper pictures of Figure~\ref{fig:flip}),
to the next marked points.
The \emph{forward flip} of a triangulation $\TT$ of $\surfo$ at $\gamma\in\TT$
is the triangulation $\tilt{\TT}{\sharp}{\gamma}$ obtained from $\TT$
by replacing the arc $\gamma$ with $\gamma^\sharp$.
Similarly, we can define arc $\gamma^\flat=\gamma^\flat(\TT)$ to be the arc obtained from $\gamma$
by clockwise moving its endpoints,
and the \emph{backward flip} $\tilt{\TT}{\flat}{\gamma}$ of $\TT$ at $\gamma\in\TT$
is the triangulation $\tilt{\TT}{\flat}{\gamma}$ obtained from $\TT$
by replacing the arc $\gamma$ with $\gamma^\flat$.
\end{definition}

Clearly, these two flips are inverse operations.
Also note that under the forgetful map $F$, a forward/backward flip in $\surfo$
becomes a normal flip (without direction) of $\surf$, cf. Figure~\ref{fig:flip},
which is an involution.
\begin{definition}
The exchange graph $\EG(\surfo)$ is the oriented graph whose vertices
are triangulations of $\surfo$ and whose edges correspond to forward flips between them.
\end{definition}

\begin{remark}
Recall that $\pi_1\EG(\surf)$ is generated by squares and pentagons (\cite[Theorem~3.10]{FST}).
By \cite{Kr}, forward flips also satisfy the square and pentagon relations
(cf. Figure~\ref{fig:5+}).
We believe that $\pi_1\EG(\surfo)$ is also generated by squares and pentagons.
\end{remark}
\begin{figure}
\begin{tikzpicture}[xscale=.6,yscale=.75]
  \foreach \j in {0, 8, 16}
  {
    \draw[thick](0+\j,0)node[NavyBlue]{$\bullet$}to(2+\j,0)node[NavyBlue]{$\bullet$}to(3+\j,2)node[NavyBlue]{$\bullet$}
    to(1+\j,3)node[NavyBlue]{$\bullet$}to(-1+\j,2)node[NavyBlue]{$\bullet$}to(0+\j,0);
    \draw(1+\j,1)node[red]{$\circ$}(\j,1.666)node[red]{$\circ$}(2+\j,1.666)node[red]{$\circ$};
  }
  \foreach \j in {4, 12}
  {
    \draw[thick](0+\j,0-6)node[NavyBlue]{$\bullet$}to(2+\j,0-6)node[NavyBlue]{$\bullet$}to(3+\j,2-6)node[NavyBlue]{$\bullet$}
    to(1+\j,3-6)node[NavyBlue]{$\bullet$}to(-1+\j,2-6)node[NavyBlue]{$\bullet$}to(0+\j,0-6);
    \draw(1+\j,1-6)node[red]{$\circ$}(\j,1.666-6)node[red]{$\circ$}(2+\j,1.666-6)node[red]{$\circ$};
  }
\draw[NavyBlue,thick](0,0)to(1,3)to(2,0) (0,4.5)node{};
\draw[NavyBlue,thick](-1+8,2).. controls +(5:3) and +(170:3) ..(2+8,0)to(1+8,3);
\draw[NavyBlue,thick](2+8+8,0).. controls +(170:3) and +(5:3) ..(-1+8+8,2).. controls +(15:3.5) and +(-135:2.5) ..(3+8+8,2);
\draw[NavyBlue,thick](1+4,3-6)to(0+4,0-6).. controls +(65:3.6) and +(-120:3) ..(3+4,2-6);
\draw[NavyBlue,thick](0+12,0-6).. controls +(65:3.6) and +(-120:3) ..(3+12,2-6)
    .. controls +(-135:2.5) and +(15:3.5) ..(-1+12,2-6);

\draw[blue,->,>=stealth] (1+4-1.5,1.5)to(1+4+1.5,1.5);
\draw[blue,->,>=stealth] (1+12-1.5,1.5)to(1+12+1.5,1.5);
\draw[blue,->,>=stealth] (1+2-1,1-3+1.5)to(1+2+.5,1-3-1);
\draw[blue,->,>=stealth] (1+6+8-.5,1-3-1)to(1+6+8+1,1-3+1.5);
\draw[blue,->,>=stealth] (1+8-1.5,1.5-6)to(1+8+1.5,1.5-6);

    \path (1+4,1.5+1) coordinate (v);

\draw[xshift=4.5cm,yshift=2cm,blue,>=stealth]
    (0,0)to(2*.4,0)to(3*.4,2*.4)to(1*.4,3*.4)to(-1*.4,2*.4)to(0,0)
    (.8,0)to(.4,1.2)
    (0,0)edge[->-=.7](.8,0) (.4,1.2)edge[->-=.7](-.4,.8);
\draw[xshift=12.5cm,yshift=2cm,blue,>=stealth]
    (0,0)to(2*.4,0)to(3*.4,2*.4)to(1*.4,3*.4)to(-1*.4,2*.4)to(0,0)
    (.8,0)to(-.4,.8)
    (1.2,.8)edge[-<-=.7](.8,0) (.4,1.2)edge[->-=.7](-.4,.8);
\draw[xshift=1cm,yshift=-2.5cm,blue,>=stealth]
    (0,0)to(2*.4,0)to(3*.4,2*.4)to(1*.4,3*.4)to(-1*.4,2*.4)to(0,0)
    (1.2,.8)edge[-<-=.7](.8,0) (.4,1.2)edge[->-=.7](0,0);
\draw[xshift=16.2cm,yshift=-2.5cm,blue,>=stealth]
    (0,0)to(2*.4,0)to(3*.4,2*.4)to(1*.4,3*.4)to(-1*.4,2*.4)to(0,0)
    (0,0)edge[->-=.7](.8,0) (-.4,.8)edge[-<-=.5](1.2,.8);
\draw[xshift=8.5cm,yshift=-6cm,blue,>=stealth]
    (0,0)to(2*.4,0)to(3*.4,2*.4)to(1*.4,3*.4)to(-1*.4,2*.4)to(0,0)
    (1.2,.8)edge[-<-=.7](0,0) (.4,1.2)edge[->-=.7](-.4,.8);
\end{tikzpicture}
\caption{The pentagon relation for forward flips}
\label{fig:5+}
\end{figure}
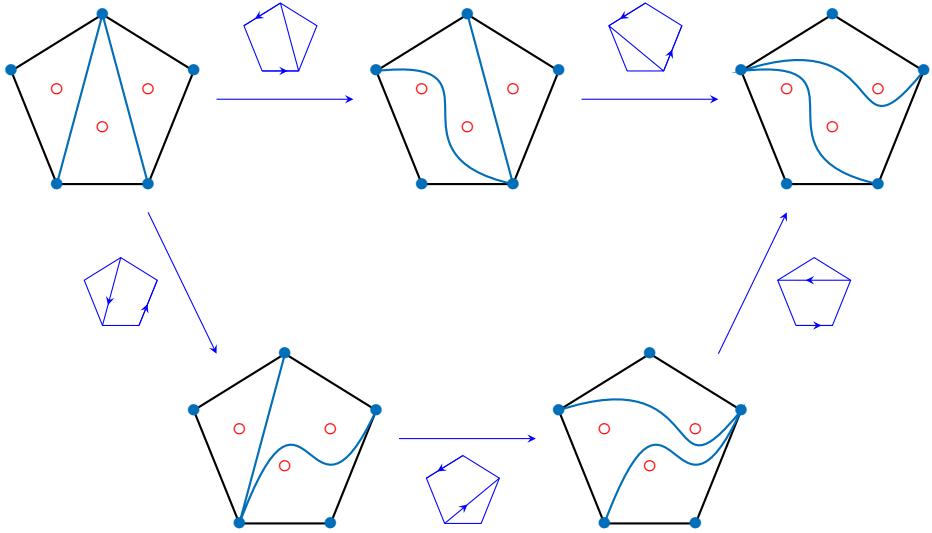
\subsection{The braid twists}
The mapping class group $\MCG(\surfo)$ is the group of isotopy classes of
(orientation preserving) homeomorphisms of $\surfo$,
where all homeomorphisms and isotopies are required to:
i) fix $\partial\surfo(\supset\M)$ pointwise;
ii) fix the decorating points set $\Tri$ (but allow to permutate points in it).
Note that the mapping class group $\MCG(\surf)$ of $\surf$ will require only the first condition
and thus there is a canonical map
\begin{gather}\label{eq:surjection}
    F_*\colon\MCG(\surfo)\twoheadrightarrow\MCG(\surf)
\end{gather}
induced by the forgetful map $F$.
As $\MCG(\surf)$ is generated by 
Dehn twists along simple closed curves (which misses
the decorating points), $F_*$ is clearly surjective.

For any closed arc $\eta\in\cA(\surfo)$, there is the (positive) \emph{braid twist}
$\Bt{\eta}\in\MCG(\surfo)$ along $\eta$, which is shown in Figure~\ref{fig:Braid twist}.
\begin{figure}[ht]\centering
\begin{tikzpicture}[scale=.3]
  \draw[very thick,NavyBlue](0,0)circle(6)node[above,black]{$_\eta$};
  \draw(-120:5)node{+};
  \draw(-2,0)edge[red, very thick](2,0)  edge[cyan,very thick, dashed](-6,0);
  \draw(2,0)edge[cyan,very thick,dashed](6,0);
  \draw(-2,0)node[white] {$\bullet$} node[red] {$\circ$};
  \draw(2,0)node[white] {$\bullet$} node[red] {$\circ$};
  \draw(0:7.5)edge[very thick,->,>=latex](0:11);\draw(0:9)node[above]{$\Bt{\eta}$};
\end{tikzpicture}\;
\begin{tikzpicture}[scale=.3]
  \draw[very thick, NavyBlue](0,0)circle(6)node[above,black]{$_\eta$};
  \draw[red, very thick](-2,0)to(2,0);
  \draw[cyan,very thick, dashed](2,0).. controls +(0:2) and +(0:2) ..(0,-2.5)
    .. controls +(180:1.5) and +(0:1.5) ..(-6,0);
  \draw[cyan,very thick,dashed](-2,0).. controls +(180:2) and +(180:2) ..(0,2.5)
    .. controls +(0:1.5) and +(180:1.5) ..(6,0);
  \draw(-2,0)node[white] {$\bullet$} node[red] {$\circ$};
  \draw(2,0)node[white] {$\bullet$} node[red] {$\circ$};
\end{tikzpicture}
\caption{The Braid twist}
\label{fig:Braid twist}
\end{figure}
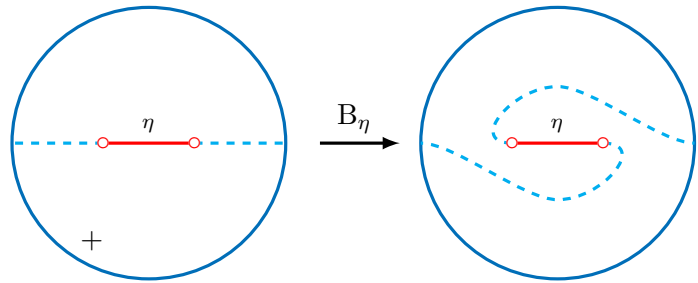
Further, there is the following well-known formula
\begin{gather}\label{eq:formulaB}
    \Bt{\Psi(\eta)}=\Psi\circ\Bt{\eta}\circ\Psi^{-1},
\end{gather}
for any $\Psi\in\MCG(\surfo)$.

\begin{definition}\label{def:bt}
The \emph{braid twist group} $\BT(\surfo)$ of the decorated marked surface $\surfo$
is the subgroup of $\MCG(\surfo)$ generated by the braid twists $\Bt{\eta}$
for $\eta\in\cA(\surfo)$.
\end{definition}

\begin{example}\label{ex:1/2}
If $\Int(\alpha,\beta)=\frac{1}{2}$, there is a closed arc $\eta$ (cf. Figure~\ref{fig:eta}) such that
\begin{gather}\label{eq:tt}
    \eta=\Bt{\alpha}(\beta)=\bt{\beta}(\alpha),\quad
    \alpha=\Bt{\beta}(\eta)=\bt{\eta}(\beta),\quad
    \beta=\Bt{\eta}(\alpha)=\bt{\alpha}(\eta).
\end{gather}
Note that $\eta$ is the closed arc such that
the interior of the triangle bounded by $\alpha,\beta,\eta$ is contractible.
In fact, there is exactly one more such closed arc
(dashed arc in Figure~\ref{fig:eta}), namely
\[\eta'=\bt{\alpha}(\beta)=\Bt{\beta}(\alpha),\]
satisfying the triangle bounded by these three arcs is contractible.

\begin{figure}[ht]\centering
\begin{tikzpicture}[scale=1]
\draw[red,thick](0,0).. controls +(90:1) and +(-90:1) ..(3,2);
\draw[red,thick](6,0).. controls +(180:1) and +(0:1) ..(3,2);

\draw[red,thick,dashed](0,0).. controls +(90:4) and +(90:4) ..(6,0);

\draw[red,thick](0,0).. controls +(90:.7) and +(210:1) ..(3,1)
    .. controls +(30:1) and +(180:1) ..(6,0);

\draw(0,0)node[white] {$\bullet$} node[red]{$\circ$}node[left]{$Z_2$}
     (3,2)node[white] {$\bullet$} node[red]{$\circ$}node[above]{$Z_0$}
     (3,3)node[above,black]{$\eta'$}
     (6,0)node[white] {$\bullet$} node[red]{$\circ$}node[right]{$Z_1$}
     (4.8,1)node {$\alpha$}(1.5,1.3)node {$\beta$}(3,0.7)node[below] {$\eta$};
\end{tikzpicture}
\caption{Intersecting one half}
\label{fig:eta}
\end{figure}
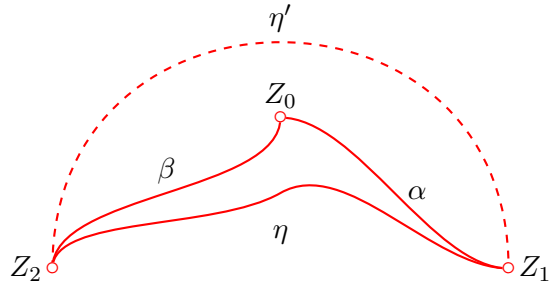
\end{example}

We have the following straightforward observation:
\begin{lemma}
Let $\gamma$ be a open arc in $\TT$ and $s$ be its dual in $\TT^*$.
Then in the triangulation $\tilt{\TT}{\sharp}{\gamma}$, the dual of $\gamma^\sharp$ is still $s$.
Moreover, let $\tilt{\TT}{\sharp}{\gamma}$ and $\tilt{\TT}{\flat}{\gamma}$ be
the two flips of $\TT$ at $\gamma$. Then
$$\gamma^\flat=\Bt{s}(\gamma^\sharp)\quad\text{and}\quad
\tilt{\TT}{\flat}{\gamma}=\Bt{s}(\tilt{\TT}{\sharp}{\gamma}).$$
\end{lemma}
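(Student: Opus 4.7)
The proof will be entirely local: everything interesting happens inside the quadrilateral $A \subset \surfo$ whose diagonal is $\gamma$, made up of the two triangles of $\TT$ adjacent to $\gamma$, each containing one decorating point (call them $Z_1, Z_2$). Outside $A$, the triangulations $\TT$, $\tilt{\TT}{\sharp}{\gamma}$ and $\tilt{\TT}{\flat}{\gamma}$ agree, and the arc $s \in \TT^*$, being isotopic to a curve entirely contained in $A$, is disjoint from all of this exterior data.

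\textbf{First claim (duality persists).} I would verify from the definition that $s$ is characterized inside $A$ as the unique (up to isotopy in $A - \{Z_1,Z_2\}$) simple closed arc from $Z_1$ to $Z_2$ which crosses the diagonal $\gamma$ exactly once and is disjoint from $\partial A$. When $\gamma$ is replaced by $\gamma^\sharp$ the region $A$, its boundary, and the two decorating points $Z_1,Z_2$ are unchanged, so I can read off from the upper-right picture in Figure~\ref{fig:flip} that the same representative of $s$ crosses $\gamma^\sharp$ exactly once and is disjoint from every other arc of $\tilt{\TT}{\sharp}{\gamma}$. By the defining property of the dual in Definition at the start of \S\ref{sec:Kra}, $s$ is therefore the dual of $\gamma^\sharp$ with respect to $\tilt{\TT}{\sharp}{\gamma}$.

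\textbf{Second claim (braid twist identifies the two flips).} Under the forgetful map $F$, both $\gamma^\sharp$ and $\gamma^\flat$ map to the same arc in $\surf$, namely the other diagonal of the quadrilateral $F(A)$; hence in $\surfo$ the two arcs $\gamma^\sharp$ and $\gamma^\flat$ coincide off a disk neighbourhood $N \subset A$ of $s$ and differ only by how they wind around $Z_1$ and $Z_2$. Since $\Bt{s}$ is, by definition (Figure~\ref{fig:Braid twist}), the half-twist supported in $N$, I only need to check on the local model depicted in Figure~\ref{fig:flip}: applying the half-twist along $s$ to the anticlockwise-routed diagonal $\gamma^\sharp$ produces the clockwise-routed diagonal $\gamma^\flat$. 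This is precisely the elementary local picture that already appears in Example~\ref{ex:1/2} (with $\Int(\alpha,\beta) = \tfrac12$ playing the role of the half intersection between $s$ and the relevant diagonal), and I would just draw it out and read off the equality.

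\textbf{Third claim (twist of the whole triangulation).} Since $\Bt{s}$ is supported in the disk $N \subset A$ and $\gamma^\sharp$ is the unique arc of $\tilt{\TT}{\sharp}{\gamma}$ meeting $N$ essentially, $\Bt{s}$ fixes every other arc of $\tilt{\TT}{\sharp}{\gamma}$ pointwise up to isotopy. Combined with $\Bt{s}(\gamma^\sharp) = \gamma^\flat$ from the second claim, this gives $\Bt{s}(\tilt{\TT}{\sharp}{\gamma}) = \tilt{\TT}{\flat}{\gamma}$.

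The only non-routine point is the second claim, and even there the difficulty is purely pictorial: one must fix consistent conventions for the orientation ``anticlockwise'' used in the definition of $\gamma^\sharp$ and the ``positive'' sense of the braid twist in Figure~\ref{fig:Braid twist}, and then verify that these conventions match so that $\Bt{s}$, not $\bt{s}$, sends $\gamma^\sharp$ to $\gamma^\flat$. This is the main (and essentially only) obstacle, and it is settled by a single careful local picture.
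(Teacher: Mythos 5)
Your proposal is correct and follows essentially the same route as the paper, whose proof is simply a citation of the local pictures (the upper part of Figure~\ref{fig:flip} for the persistence of the dual and Figure~\ref{fig:flips} for the braid-twist identity); your write-up just makes the same local check inside the quadrilateral explicit, including the convention check that the positive twist $\Bt{s}$ (rather than $\bt{s}$) sends $\gamma^\sharp$ to $\gamma^\flat$.
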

\begin{proof}
The first claim follows from the upper pictures in Figure~\ref{fig:flip}
and the equations follow from Figure~\ref{fig:flips}.
\end{proof}

\begin{figure}[ht]\centering
\begin{tikzpicture}[xscale=-.4,yscale=.4,rotate=90]
    \path (-135:4) coordinate (v1)
          (-45:4) coordinate (v2)
          (45:4) coordinate (v3);
\draw[NavyBlue,very thick](v1)to(v2)node{$\bullet$}to(v3);
    \path (-135:4) coordinate (v1)
          (45:4) coordinate (v2)
          (135:4) coordinate (v3);
\draw[NavyBlue,very thick](v2)node{$\bullet$}to(v3)node{$\bullet$}to(v1)node{$\bullet$};
\draw[>=stealth,NavyBlue,thick](135:4).. controls +(-10:2) and +(45:3) ..(0,0)
                             .. controls +(-135:3) and +(170:2) ..(-45:4);
\draw[red,thick](135:1.333)node{\tiny{$\circ$}}(-45:1.333)node{\tiny{$\circ$}};
\end{tikzpicture}
\begin{tikzpicture}[scale=1.2, rotate=180]
\draw[blue,<-,>=stealth](3-.6,2.2)to(3+.6,2.2);
\draw[blue](3-.25,1.5+.5)rectangle(3+.25,1.5);\draw(3,3)node{};
\draw[blue,->,>=stealth](3-.25,1.5+.5)to(3-.25,1.5+.15);
\draw[blue,->,>=stealth](3+.25,1.5)to(3+.25,1.5+.35);
\end{tikzpicture}
\begin{tikzpicture}[scale=.4]
    \path (-135:4) coordinate (v1)
          (-45:4) coordinate (v2)
          (45:4) coordinate (v3);
\draw[NavyBlue,very thick](v1)to(v2)node{$\bullet$}to(v3);
    \path (-135:4) coordinate (v1)
          (45:4) coordinate (v2)
          (135:4) coordinate (v3);
\draw[NavyBlue,very thick](v2)node{$\bullet$}to(v3)node{$\bullet$}to(v1)node{$\bullet$};
\draw[>=stealth,NavyBlue,thick](-135:4)to(45:4);
\draw[red,thick](135:1.333)node{\tiny{$\circ$}}(-45:1.333)node{\tiny{$\circ$}};
\end{tikzpicture}
\begin{tikzpicture}[scale=1.2, rotate=180]
\draw[blue,<-,>=stealth](3-.6,.7)to(3+.6,.7);
\draw[blue](3-.25,.5-.5)rectangle(3+.25,.5);\draw(3,1.5)node{};
\draw[blue,->,>=stealth](3-.25,.5-.5)to(3+.1,.5-.5);
\draw[blue,->,>=stealth](3+.25,.5)to(3-.1,.5);
\end{tikzpicture}
\begin{tikzpicture}[scale=.4]
    \path (-135:4) coordinate (v1)
          (-45:4) coordinate (v2)
          (45:4) coordinate (v3);
\draw[NavyBlue,very thick](v1)to(v2)node{$\bullet$}to(v3);
    \path (-135:4) coordinate (v1)
          (45:4) coordinate (v2)
          (135:4) coordinate (v3);
\draw[NavyBlue,very thick](v2)node{$\bullet$}to(v3)node{$\bullet$}to(v1)node{$\bullet$};
\draw[>=stealth,NavyBlue,thick](135:4).. controls +(-10:2) and +(45:3) ..(0,0)
                             .. controls +(-135:3) and +(170:2) ..(-45:4);
\draw[red,thick](135:1.333)node{\tiny{$\circ$}}(-45:1.333)node{\tiny{$\circ$}};
\end{tikzpicture}
\caption{Composition of forward flips as a negative braid twist}
\label{fig:flips}
\end{figure}
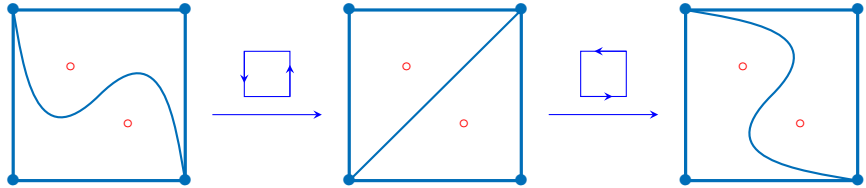

As a consequence, we obtain a map between exchange graphs.

\begin{lemma}
As graphs, we have the following surjective map induced by the forgetful map $F$:
\begin{gather}
    F_*\colon\EG(\surfo)/\BT(\surfo)\twoheadrightarrow\EG(\surf).
\end{gather}
\end{lemma}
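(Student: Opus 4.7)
The plan is to define $F_*$ on both vertices and edges of $\EG(\surfo)$, verify that it descends through the quotient by $\BT(\surfo)$, and then check surjectivity in both dimensions.

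On vertices, I set $F_*([\TT]) := F(\TT)$; this is a triangulation of $\surf$ by the discussion preceding Definition~\ref{def:arcs}. On edges, the lower half of Figure~\ref{fig:flip} shows that a forward flip $\TT \to \tilt{\TT}{\sharp}{\gamma}$ projects to the (undirected) flip of $F(\TT)$ at $F(\gamma)$, so $F_*$ sends edges to edges. To see that $F_*$ descends to the quotient $\EG(\surfo)/\BT(\surfo)$, I show that $\BT(\surfo) \subseteq \ker F_*$ inside $\MCG(\surfo)$: each generator $\Bt{\eta}$ is supported in an embedded disk $D_\eta \subset \surfo$ whose only decorating points are the two endpoints of $\eta$ (see Figure~\ref{fig:Braid twist}), so under the forgetful map $D_\eta$ becomes an unpunctured disk in $\surf$ and the image $F_*(\Bt{\eta})$ is represented by a homeomorphism supported in $D_\eta$ fixing $\partial D_\eta$ pointwise, which Alexander's trick identifies with the identity. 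Consequently $F(\psi(\TT)) = F(\TT)$ for every $\psi \in \BT(\surfo)$, so the induced map on the quotient is well-defined.

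For surjectivity I argue by connectedness. Fix any $\TT_0 \in \EG(\surfo)$ and set $\T_0 := F(\TT_0)$. Given an arbitrary $\T \in \EG(\surf)$, pick a sequence of flips $\T_0 \to \T_1 \to \cdots \to \T_k = \T$ in $\EG(\surf)$ (possible by its connectedness). I lift this inductively: assuming $\TT_i \in \EG(\surfo)$ with $F(\TT_i) = \T_i$, let $\gamma_i$ be the arc of $\T_i$ flipped to produce $\T_{i+1}$, let $\widetilde{\gamma}_i \in \TT_i$ be the unique arc with $F(\widetilde{\gamma}_i) = \gamma_i$, and set $\TT_{i+1} := \tilt{\TT_i}{\sharp}{\widetilde{\gamma}_i}$. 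By the edge-level statement above, $F(\TT_{i+1}) = \T_{i+1}$, so at $i = k$ I obtain a vertex of $\EG(\surfo)$ mapping to $\T$. The same procedure simultaneously realizes each edge of $\EG(\surf)$ as the image of a forward flip, completing surjectivity.

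The only genuinely delicate point is the kernel inclusion $\BT(\surfo) \subseteq \ker F_*$; the rest is bookkeeping on top of the connectedness of $\EG(\surf)$ and the lower half of Figure~\ref{fig:flip}. What makes the passage to the quotient \emph{necessary} rather than merely sufficient is the preceding lemma: the backward flip $\tilt{\TT}{\flat}{\gamma}$ and the forward flip $\tilt{\TT}{\sharp}{\gamma}$ cover the same undirected edge of $\EG(\surf)$ but differ by the braid twist $\Bt{s}$, so quotienting identifies the two oriented lifts and yields a single unoriented edge of $\EG(\surf)$, as required for a graph map.
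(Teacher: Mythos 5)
Your proof is correct and follows essentially the same route as the paper: establish $\BT(\surfo)\subset\ker F_*$ so that the map descends to the quotient, observe via the lower half of Figure~\ref{fig:flip} that forward/backward flips in $\surfo$ project to the ordinary flip in $\surf$, and then use connectedness of $\EG(\surf)$ to get surjectivity on vertices and edges. The differences are only in presentation. Where the paper dismisses the kernel inclusion as straightforward, you supply the supporting argument (each $\Bt{\eta}$ is supported in a disk whose only decorations are the endpoints of $\eta$, so after forgetting them its image is killed by the Alexander trick) --- a correct and welcome expansion. For surjectivity the paper argues by comparing the oriented $(n,n)$-regularity of $\EG(\surfo)$ with the $n$-regularity of the connected graph $\EG(\surf)$, while you lift an arbitrary flip sequence starting from $F(\TT_0)$ step by step, using that $F$ restricts to a bijection from the arcs of $\TT$ to those of $F(\TT)$ (which indeed follows from $F(\TT)$ being a triangulation with $n$ arcs). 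Both arguments rest on the same two facts --- connectedness of $\EG(\surf)$ and the fact that every flip at $F(\TT)$ is covered by a forward flip at $\TT$ --- so yours is a more constructive phrasing of the paper's compressed counting argument; your closing remark about the forward and backward lifts of an edge becoming identified in the quotient matches the paper's convention of treating the resulting $2$-cycles as unoriented edges and does not affect the proof.
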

\begin{proof}
Recall that there is a canonical surjection $F_*\colon\MCG(\surfo)\twoheadrightarrow\MCG(\surf)$
in \eqref{eq:surjection}.
By definition, it is straightforward to see that
\begin{gather}\label{eq:F=1}
    \BT(\surfo)\subset\ker F_*.
\end{gather}
Thus, $F$ induces a quotient map $F_*\colon\EG(\surfo)/\BT(\surfo)\to\EG(\surf)$
between sets.
Next, the $F_*$ preserves the edges (cf. Figure~\ref{fig:flip}),
in the sense that the forward and backward flips of a triangulation $\TT$ at some
closed $\gamma$ both become the flip of $\T=F(\TT)$ at $F(\gamma)$.
Thus, $F_*$ is a map between graphs.
Finally, by definition, $\EG(\surfo)$ is an oriented $(n,n)$-regular graph
(that is, every vertex has $n$ arrows in and $n$ arrow out) and
$\EG(\surf)$ is an unoriented $n$-regular graph.
Therefore we deduce that $F$ is surjective.
\end{proof}

\begin{remark}\label{rem:c.c.}
In fact, if we take any connected component $\EG^{\chi}(\surfo)$ of $\EG(\surfo)$,
then $F_*$ induces an isomorphism
\[
    F_*\colon\EG^{\chi}(\surfo)/\BT(\surfo)\cong\EG(\surf)
\]
since $\EG(\surf)$ is connected and both graphs are $n$-regular.
\end{remark}

\subsection{The initial triangulation}
\begin{remark}\label{rem:sp}
For technical reasons, we will exclude two special cases for the moment:
\begin{itemize}
\item[I).] an annulus with one marked point on each of its boundary components;
\item[II).] a torus with only boundary component and one marked point.
\end{itemize}
These cases will be discussed independently in \S~\ref{sec:Kro}.
\end{remark}

\begin{lemma}\label{lem:ini}
There exists a triangulation $\TT$ of $\surfo$ such that
any two triangles share at most one edge.
In other words, the quiver $Q_\TT$ has no double arrows.
\end{lemma}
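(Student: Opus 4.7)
The plan is to exhibit an explicit triangulation with the required property. Since $Q_\TT$ is by definition $Q_{F(\TT)}$, the quiver depends only on the image $\T = F(\TT)$ in $\surf$. So it suffices to find a triangulation $\T$ of the (unpunctured) marked surface $\surf$ such that no two triangles share more than one edge, and then lift it to any triangulation of $\surfo$.

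First, I would characterize the obstruction topologically. Two triangles $T_1, T_2$ sharing two common arcs $\gamma_1, \gamma_2$ form a subsurface $R = T_1 \cup T_2$ whose topological boundary in $\surf$ consists of the two remaining sides. As an abstract surface, $R$ is two disks glued along two arcs of their boundaries, hence an annulus. Embedded in $\surf$, its two boundary components are forced to be closed loops based at the two endpoints of $\gamma_1$ and $\gamma_2$, each running through either remaining arcs of $\T$ or boundary segments of $\partial\surf$. The two excluded surfaces of Remark~\ref{rem:sp} are exactly those for which every triangulation is forced to contain such a configuration, and I would record this as the topological input that justifies the exclusion.

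Assuming $\surf$ is not excluded, I would construct the desired triangulation by induction on the complexity $n = 6g + 3|\partial\surf| + |\M| - 6$ from \eqref{eq:n}. The base cases are the disk, where every triangulation gives a type $A$ quiver with no double arrows, together with the low-complexity surfaces lying immediately outside the excluded list (e.g.\ annuli with $(1,2)$, $(1,3)$ or $(2,2)$ marked points on the two boundaries, and the torus with one boundary and two marked points), each handled by an explicit picture. For the inductive step, I choose a simple arc $\gamma$ whose cutting either reduces the genus by one or merges two boundary components into one, leaving a surface of strictly lower complexity that itself remains outside the excluded list. Applying the inductive hypothesis and gluing back along $\gamma$, the only possible new double arrow must involve the two triangles adjacent to $\gamma$, and the choice of $\gamma$ is designed to rule this out.

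The main obstacle is making sure that, after the cut, the pieces stay outside the excluded cases and that the two triangles flanking $\gamma$ in the reglued triangulation never collapse to an annulus-with-$(1,1)$-marked-points region. This is secured by picking $\gamma$ with endpoints on boundary components that each carry at least one additional marked point; equivalently, $\gamma$ is chosen so that neither of its sides is locally parallel to a marked-point-scarce portion of $\partial\surf$. Verifying that such a $\gamma$ always exists (given that $\surf$ is not one of the two excluded surfaces) is the technical heart of the argument and occupies the inductive analysis.
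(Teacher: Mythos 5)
Your identification of the obstruction is correct (a double arrow in $Q_\T$ comes from two triangles glued along two sides, whose union is an annulus bounded by the two remaining sides), and the reduction to finding a suitable triangulation of $\surf$ and then lifting it to $\surfo$ is harmless. But note first that the paper itself does not construct anything: its proof is a one-line citation of \cite[Proposition~7.13]{GLFS}, which supplies exactly the existence statement once the two surfaces of Remark~\ref{rem:sp} are excluded. Reproving that result by hand is a legitimate alternative, but your proposal does not actually do it: you explicitly defer both the verification of the base cases and the existence of the cutting arc $\gamma$ ("the technical heart of the argument"), and you also treat as known input the claim that the two excluded surfaces are precisely those on which every triangulation has a double arrow -- which is (half of) the content of the lemma itself, so it cannot simply be "recorded."

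More seriously, the inductive step as formulated would not go through even if a good $\gamma$ were produced. The inductive hypothesis only yields \emph{some} double-arrow-free triangulation $\T'$ of the cut surface, with no control over the two triangles of $\T'$ adjacent to the two boundary segments created by the cut. After regluing, these are exactly the two triangles flanking $\gamma$; the hypothesis does not prevent them from already sharing one arc $\delta$ of $\T'$ (they are only guaranteed not to share two), in which case the reglued triangulation has a double arrow between them ($\gamma$ and $\delta$), nor does it prevent both copies of $\gamma$ from lying on a single triangle of $\T'$, which after regluing does not even yield an admissible triangle of the unpunctured surface. Choosing $\gamma$ "with endpoints on boundary components carrying extra marked points" cannot repair this, because the failure lives in the triangulation of the cut surface, not in the isotopy class of $\gamma$. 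To make your scheme work you would need a strengthened inductive statement (existence of a double-arrow-free triangulation with prescribed behaviour along specified boundary segments) or explicit constructions for each topological type -- which is precisely what the cited result of \cite{GLFS} provides, and why the paper settles the lemma by quoting it.
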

\begin{proof}
The second statement, which is equivalent to the first one,
follows from \cite[Proposition~7.13]{GLFS}, noticing that we have excluded
the two special cases (a torus with one marked point and an annulus with two marked points).
\end{proof}

\begin{notations}
We will fix a triangulation $\TT_0$ such that its image $\T_0=F(\TT_0)$ (a triangulation of $\surf$)
satisfies the condition in Lemma~\ref{lem:ini}.
Let
\[\TT_0=\{\gamma_1,\ldots,\gamma_n\}\quad\text{and}\quad\TT_0^*=\{s_1,\ldots,s_n\},\]
where $s_i$ is the dual of $\gamma_i$ w.r.t. $\TT_0$.
Denote by $\EGp(\surfo)$ the connected component of $\EG(\surfo)$
that contains $\TT_0$.
\end{notations}

We say a curve is in a \emph{minimal position} w.r.t. $\TT_0$,
if it has minimal intersections with (arcs in) $\TT_0$.
Let $\Int(\TT_0,\eta)=\sum_{i=1}^n \Int(\gamma_i,\eta)$.
Then a representative $\eta$ is in a minimal position if
it intersects $\TT_0$ exactly $\Int(\TT_0,\eta)$ times.

We will repeatedly use induction on $\Int(\TT_0,\eta)$ later.
The next lemma is the basic idea of those inductions.

\begin{lemma}\label{lem:dcp}
Suppose that a general closed arc $\eta$ in $\CA(\surfo)$ is not a closed arc $s$ in $\TT_0^*$.
Then there are two closed arcs $\alpha,\beta$ in $\cA(\surfo)$
such that
\numbers
\item$\Int(\TT_0,\eta)=\Int(\TT_0,\alpha)+\Int(\TT_0,\beta)$ and
\item $\alpha,\beta,\eta$ form a contractible triangle in $\surfo$.
\ends
In the case when $\eta\in\cA(\surfo)$, $2^\circ$ is equivalent to
\begin{itemize}
\item[$\widetilde{2}^\circ$] $\Int(\alpha,\beta)=\frac{1}{2}$ and $\eta=\Bt{\alpha}(\beta)$.
\end{itemize}
\end{lemma}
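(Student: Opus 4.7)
The plan is to take a representative of $\eta$ in minimal position with respect to $\TT_0$ and decompose it at a well-chosen intersection point with $\TT_0$. First I would show that $\Int(\TT_0,\eta)\geq 2$. Indeed, if $\eta$ were disjoint from $\TT_0$ it would sit inside a single triangle of $\TT_0$, but such a triangle has a unique decorating point and an easy disk argument rules out any non-trivial simple general closed arc confined there; and if $\Int(\TT_0,\eta)=1$, then $\eta$ is contained in the quadrilateral formed by two triangles of $\TT_0$ sharing an edge $\gamma$ and crosses $\gamma$ exactly once, which forces $\eta$ to be the dual arc $s\in\TT_0^*$ of $\gamma$, contrary to hypothesis.

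Label the successive crossings of $\eta$ with $\TT_0$ as $p_1,\ldots,p_k$ (with $k\geq 2$) and let $T^{(0)},\ldots,T^{(k)}$ be the triangles that $\eta$ successively traverses, with decorating points $Z^{(0)}=Z_0,\,Z^{(1)},\ldots,Z^{(k)}=Z_1$; since distinct triangles contain distinct decorating points, consecutive $Z^{(j)}$ are distinct. The core step is to pick an index $j\in\{1,\ldots,k-1\}$ with $Z^{(j)}\notin\{Z_0,Z_1\}$; then $\alpha$ is defined by following $\eta$ from $Z_0$ just past $p_j$ and then running inside $T^{(j)}$ on one side of $\eta$ to $Z^{(j)}$, and symmetrically $\beta$ is defined by leaving $Z^{(j)}$ inside $T^{(j)}$ on the other side of $\eta$, meeting $\eta$ just past $p_j$, and then following $\eta$ to $Z_1$. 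By the choice of $j$ both $\alpha$ and $\beta$ connect distinct decorating points, so $\alpha,\beta\in\cA(\surfo)$; by construction $\alpha,\beta,\eta$ cobound a small disk near $Z^{(j)}$ inside $T^{(j)}$, establishing $2^\circ$. Counting intersections gives $\Int(\TT_0,\alpha)\leq j$ and $\Int(\TT_0,\beta)\leq k-j$, while the homotopy $\eta\simeq\alpha\cdot\beta$ rel endpoints in $\surfo-\Tri$ (obtained by perturbing the concatenation through the cobounding disk off $Z^{(j)}$) yields $\Int(\TT_0,\eta)\leq\Int(\TT_0,\alpha)+\Int(\TT_0,\beta)$, so all three bounds are tight and $1^\circ$ holds.

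The main obstacle is the existence of such an index $j$. If no such $j$ existed, then $Z^{(0)},\ldots,Z^{(k)}$ would take only the two values $Z_0,Z_1$ and, being consecutively distinct, would alternate; this means $\eta$ bounces repeatedly between the two triangles $T^{(0)}$ and $T^{(k)}$ across a shared edge $\gamma$. I would rule this out using Lemma~\ref{lem:ini} (no two triangles of $\TT_0$ share more than one edge, so alternation forces all crossings to lie on the same $\gamma$) combined with a bigon argument: successive crossings of $\gamma$ by $\eta$ bound sub-disks inside $T^{(0)}$ or $T^{(k)}$ whose interiors must enclose a decorating point to avoid contradicting minimal position, and the only available decorating points $Z_0,Z_1$ are endpoints of $\eta$, forcing $\eta$ to wind around $Z_0$ or $Z_1$ in a manner incompatible with simplicity in $\surfo-\Tri$. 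Finally, when $\eta\in\cA(\surfo)$, Example~\ref{ex:1/2} shows that $2^\circ$ is equivalent to $\widetilde{2}^\circ$: three simple closed arcs bound a contractible triangle iff they pairwise meet with intersection number $\tfrac12$ and each one is the braid twist of another, in particular $\eta=\Bt{\alpha}(\beta)$.
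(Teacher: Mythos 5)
Your overall strategy is the same as the paper's: since $\eta\notin\TT_0^*$, locate a triangle met by $\eta$ whose decorating point is not an endpoint of $\eta$, split $\eta$ there into two shorter arcs $\alpha,\beta$ forming a contractible triangle, and get $1^\circ$ from an intersection count; the degenerate situation where no such point exists reduces, via the property of $\TT_0$ that two triangles share at most one edge, to $\eta$ living in two triangles, i.e.\ to the excluded case $\eta\in\TT_0^*$ (your bigon/winding sketch there is vaguer than necessary: the two triangles glued along $\gamma$ form a disk containing exactly two decorating points, so a simple arc joining them is isotopic to the dual arc, contradicting either minimality or the hypothesis). The equivalence $2^\circ\Leftrightarrow\widetilde{2}^\circ$ via Example~\ref{ex:1/2} is fine, up to swapping $\alpha$ and $\beta$ so that the resolution is $\Bt{\alpha}(\beta)$ rather than $\bt{\alpha}(\beta)$.

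The genuine gap is in the construction of $\alpha$ and $\beta$. You first fix an index $j$ with $Z^{(j)}\notin\{Z_0,Z_1\}$ and then assert that from a point of the strand of $\eta$ between $p_j$ and $p_{j+1}$ you can run inside $T^{(j)}$ to $Z^{(j)}$ ``on one side of $\eta$''. But $\eta$ may traverse $T^{(j)}$ several times, and the other strands of $\eta\cap T^{(j)}$ can separate $Z^{(j)}$ from the strand you chose; then any connecting segment must cross $\eta$, so $\alpha$ and $\beta$ need not be simple and $\alpha,\beta,\eta$ need not cobound a contractible triangle, which breaks $2^\circ$ and the tightness argument for $1^\circ$. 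The paper avoids this by making the choices in the opposite order: fix the triangle $\Lambda_0$ whose decorating point $Z_0$ is not an endpoint, draw a segment $l$ from $Z_0$ and stop at the \emph{first} point $Y$ of $\eta$ it meets (so $l$ meets $\eta$ only at $Y$), and take $\alpha,\beta$ isotopic to $l$ concatenated with the two halves of $\eta$ cut at $Y$. Your argument is repaired by the same adjustment: among the strands of $\eta$ through $T^{(j)}$, use the one adjacent to the region of $T^{(j)}\setminus\eta$ containing $Z^{(j)}$; its index $j'$ still satisfies $Z^{(j')}=Z^{(j)}\notin\{Z_0,Z_1\}$. Finally, your subadditivity step $\Int(\TT_0,\eta)\le\Int(\TT_0,\alpha)+\Int(\TT_0,\beta)$ for minimal representatives needs the smoothing at $Z^{(j)}$ to be handled with some care; the paper's shortcut is simply that the constructed $\alpha,\beta$ inherit minimal position from $\eta$ (a decoration-free bigon for $\alpha$ or $\beta$ with an arc of $\TT_0$ would already be one for $\eta$), which yields $1^\circ$ directly.
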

\begin{proof}
Recall that we require that any two triangles in $\TT_0$
share at most one edge.
Thus if $\eta$ only intersects two triangles of $\TT_0$,
then $\eta=s_j\in\TT_0$ for some $j$ which we will exclude. 
Now suppose that $\eta$ intersects at least three triangles of $\TT_0$.
Then one of the decorating points in these triangles is not an endpoint of $\eta$.
Denote the triangle by $\Lambda_0$ with the decorating point $Z_0$ inside.
Choose a representative of $\eta$, also denoted by $\eta$,
when there is no confusion, such that it is in a minimal position
w.r.t. $\TT_0$.
One can draw a line segment $l$ from $Z_0$ to some point $Y$ of $\eta$
within $\Lambda_0$ such that $l$ doesn't intersect $\eta$ except
at the endpoints (cf. Figure~\ref{fig:line}).
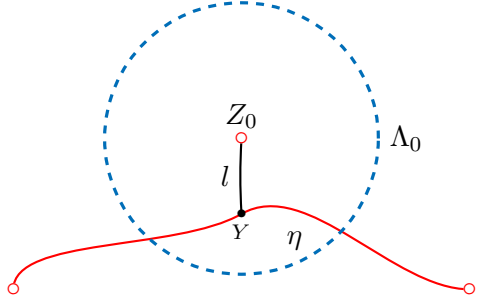
\begin{figure}[t]\centering
\begin{tikzpicture}[scale=1]
\draw[red,thick](0,0).. controls +(90:.7) and +(210:1) ..(3,1)
    node[below,black]{$^Y$}node[black]{\tiny{$\bullet$}}
    .. controls +(30:1) and +(180:1) ..(6,0);
\draw[bend right=-3,thick](3,1)to(3,2);
\draw[NavyBlue,dashed,very thick](3,2)circle(1.8cm)
    (4.8,2)node[right,black]{$\Lambda_0$}(3,1.5)node[left,black]{$l$};

\draw(0,0)node[white] {$\bullet$} node[red]{$\circ$}node[left]{}
     (3,2)node[white] {$\bullet$} node[red]{$\circ$}node[above]{$Z_0$}
     (6,0)node[white] {$\bullet$} node[red]{$\circ$}node[right]{}
     (3.7,0.9)node[below] {$\eta$};
\end{tikzpicture}
\caption{The line segment $l$}
\label{fig:line}
\end{figure}
Let $Z_1$ and $Z_2$ be the endpoints of $\eta$
such that $l$ is in the left side when we pass from $Z_2$ to $Z_1$.
Consider two closed arcs $\alpha$ and $\beta$ which are
isotopic to $l\cup\eta\mid_{Z_1 Y}$ and $l\cup\eta\mid_{Z_2 Y}$
respectively (cf. Figure~\ref{fig:alphabeta}).
Clearly, $2^\circ$ is satisfied.
Since $\eta$ is in a minimal position (w.r.t. $\TT_0$),
so are $\alpha$ and $\beta$. Thus $1^\circ$ is also satisfied.

Moreover, $\eta$ is one of $\Bt{\alpha}(\beta)$ and $\bt{\alpha}(\beta)$
when the endpoints of $\eta$ do not coincide (i.e. $\eta\in\cA(\surfo)$).
Thus, by choosing $\alpha$ and $\beta$ in some order we will obtain $\widetilde{2}^\circ$ as required.
\end{proof}

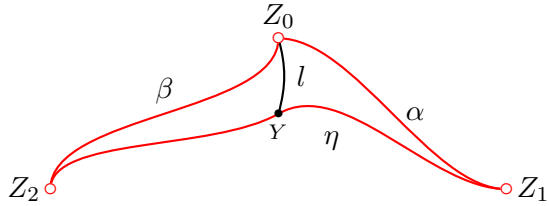
\begin{figure}[ht]\centering
\begin{tikzpicture}[scale=1]
\draw[red,thick](0,0).. controls +(90:1) and +(-90:1) ..(3,2);
\draw[red,thick](6,0).. controls +(180:1) and +(0:1) ..(3,2);

\draw[red,thick](0,0).. controls +(90:.7) and +(210:1) ..(3,1)
    node[below,black]{$^Y$}node[black]{\tiny{$\bullet$}}
    .. controls +(30:1) and +(180:1) ..(6,0);
\draw[bend right=15,thick](3,1)to(3,2) (3.1,1.5)node[right]{$l$};

\draw(0,0)node[white] {$\bullet$} node[red]{$\circ$}node[left]{$Z_2$}
     (3,2)node[white] {$\bullet$} node[red]{$\circ$}node[above]{$Z_0$}
     (6,0)node[white] {$\bullet$} node[red]{$\circ$}node[right]{$Z_1$}
     (4.8,1)node {$\alpha$}(1.5,1.3)node {$\beta$}(3.7,0.9)node[below] {$\eta$};
\end{tikzpicture}
\caption{Decomposing $\eta$}
\label{fig:alphabeta}
\end{figure}

\section{On the braid twist groups}\label{sec:4}
\subsection{Generators}
Recall that we have the braid twist group for $\surfo$.
Now we define the braid twist group for $\TT_0$.
\begin{definition}\label{def:BT}
Let $\TT$ be a triangulation of $\surfo$.
The \emph{braid twist group} $\BT(\TT)$ of the triangulation $\TT$
is the subgroup of $\MCG(\surfo)$ generated by the braid twists $\Bt{s}$
for the closed arcs $s$ in $\TT^*$.
\end{definition}

In fact, these two groups are the same.

\begin{lemma}\label{lem:gg}
$\BT(\surfo)=\BT(\TT_0)$.
\end{lemma}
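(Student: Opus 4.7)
The plan is to prove the non-trivial inclusion $\BT(\surfo)\subseteq\BT(\TT_0)$ by showing that each generator $\Bt{\eta}$ of $\BT(\surfo)$ (with $\eta\in\cA(\surfo)$) lies in $\BT(\TT_0)$, proceeding by induction on $\Int(\TT_0,\eta)$. The reverse inclusion $\BT(\TT_0)\subseteq\BT(\surfo)$ is immediate from Definitions~\ref{def:bt} and~\ref{def:BT}, since each $s\in\TT_0^*$ lies in $\cA(\surfo)$.

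The engine of the induction is the conjugation identity \eqref{eq:formulaB}: for any $\Psi\in\MCG(\surfo)$ and any closed arc,
\begin{gather*}
\Bt{\Psi(\eta)}=\Psi\circ\Bt{\eta}\circ\Psi^{-1}.
\end{gather*}
Combined with the identity $\eta=\Bt{\alpha}(\beta)$ from Example~\ref{ex:1/2}, this yields
\begin{gather*}
\Bt{\eta}=\Bt{\alpha}\circ\Bt{\beta}\circ\Bt{\alpha}^{-1}
\end{gather*}
whenever $\Int(\alpha,\beta)=\tfrac{1}{2}$ and $\alpha,\beta,\eta$ bound a contractible triangle in $\surfo$. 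Thus if both $\Bt{\alpha}$ and $\Bt{\beta}$ already lie in $\BT(\TT_0)$, then so does $\Bt{\eta}$.

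For the base case $\Int(\TT_0,\eta)=1$, the arc $\eta$ crosses exactly one open arc of $\TT_0$, so $\eta$ passes between the two triangles of $\TT_0$ adjacent along that edge and connects their (distinct) decorating points; hence $\eta\in\TT_0^*$ and $\Bt{\eta}\in\BT(\TT_0)$ by definition. For the inductive step, assume $\Int(\TT_0,\eta)\geq 2$ and that the statement holds for all smaller intersection values. By Lemma~\ref{lem:dcp} applied to $\eta$ (using condition $\widetilde{2}^\circ$, valid since $\eta\in\cA(\surfo)$), there exist $\alpha,\beta\in\cA(\surfo)$ with $\Int(\alpha,\beta)=\tfrac{1}{2}$, $\eta=\Bt{\alpha}(\beta)$ and
\begin{gather*}
\Int(\TT_0,\alpha)+\Int(\TT_0,\beta)=\Int(\TT_0,\eta).
\end{gather*}
Since $\Int(\TT_0,\alpha),\Int(\TT_0,\beta)\geq 1$ (as each is a closed arc in $\cA(\surfo)$), both are strictly smaller than $\Int(\TT_0,\eta)$. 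The inductive hypothesis then gives $\Bt{\alpha},\Bt{\beta}\in\BT(\TT_0)$, and the conjugation identity above yields $\Bt{\eta}\in\BT(\TT_0)$.

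The only subtlety is the base case: a simple closed arc crossing $\TT_0$ exactly once must be forced to be the dual arc. This uses crucially the choice of $\TT_0$ from Lemma~\ref{lem:ini} (any two triangles share at most one edge), so that crossing a single edge uniquely determines the pair of endpoints. This is precisely why the special cases in Remark~\ref{rem:sp}, for which no such $\TT_0$ exists, must be deferred to \S~\ref{sec:Kro}. Beyond this point the argument is a routine descent driven by Lemma~\ref{lem:dcp} and the conjugation formula, with no further geometric input required.
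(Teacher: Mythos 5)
Your proof is correct and follows essentially the same route as the paper: induction on $\Int(\TT_0,\eta)$, decomposing $\eta$ via Lemma~\ref{lem:dcp} as $\eta=\Bt{\alpha}(\beta)$ and concluding with the conjugation formula \eqref{eq:formulaB}. One small remark: the hypothesis from Lemma~\ref{lem:ini} (no two triangles sharing two edges) is really invoked inside the proof of Lemma~\ref{lem:dcp} (to guarantee an arc not in $\TT_0^*$ meets at least three triangles, so a free decorating point exists), rather than being needed for the base case $\Int(\TT_0,\eta)=1$, but this does not affect the validity of your argument.
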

\begin{proof}
Use induction on $\Int(\TT_0,\eta)$ to show that $\Bt{\eta}$ is in $\BT(\TT_0)$.
If so, then $\BT(\surfo)\subset\BT(\TT_0)$. Clearly, $\BT(\surfo)\supset\BT(\TT_0)$
and therefore the lemma follows.

If $\Int(\TT_0,\eta)=1$, then $\eta\in\TT_0^*$ and the claim is trivial.
Suppose that the claim holds for any $\eta'$ with $\Int(\TT_0,\eta')<m$.
Consider a closed arc $\eta\in\cA(\surfo)$ with $\Int(\TT_0,\eta)=m$.
Applying Lemma~\ref{lem:dcp}, we have $\eta=\Bt{\alpha}(\beta)$ for some $\alpha,\beta$.
By the inductive assumption, $\Bt{\alpha}$ and $\Bt{\beta}$ are in $\BT(\TT_0)$.
By formula \eqref{eq:formulaB}, we have
\[
    \Bt{\eta}=\Bt{\Bt{\alpha}(\beta)}=\Bt{\alpha}\circ\Bt{\beta}\circ\Bt{\alpha}^{-1}\in\BT(\TT_0),
\]
which completes the proof.
\end{proof}

\begin{proposition}\label{pp:gg}
$\BT(\surfo)=\BT(\TT)$ for any $\TT\in\EG(\surfo)$.
\end{proposition}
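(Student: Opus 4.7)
The plan is to reduce the statement to Lemma~\ref{lem:gg} by showing that $\BT(\TT)$ is preserved under flips, and then invoking flip-connectedness of the exchange graph. The inclusion $\BT(\TT) \subset \BT(\surfo)$ is immediate from the definitions, so it suffices to establish $\BT(\TT) \supset \BT(\surfo) = \BT(\TT_0)$.

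The key step I would carry out is to show $\BT(\TT) = \BT(\tilt{\TT}{\sharp}{\gamma})$ for any forward flip at an arc $\gamma \in \TT$ with dual $s \in \TT^*$. By the preceding lemma, $s$ is also the dual of $\gamma^\sharp$ in $\tilt{\TT}{\sharp}{\gamma}$, so $\Bt{s}$ is a common generator of both groups. For each remaining arc $\gamma_j \in \TT$, I would compare its dual $s_j \in \TT^*$ with its new dual $s'_j \in (\tilt{\TT}{\sharp}{\gamma})^*$. When $\gamma_j$ does not bound the quadrilateral of $\gamma$, the dual is unchanged; when it does, the decorating point in the adjacent triangle inside the quadrilateral is either $Z_1$ or $Z_2$ (the endpoints of $s$), and a local picture analysis should give $s'_j = \Bt{s}^{\epsilon_j}(s_j)$ for some $\epsilon_j \in \{-1, 0, 1\}$. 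The conjugation formula \eqref{eq:formulaB} then yields $\Bt{s'_j} = \Bt{s}^{\epsilon_j} \circ \Bt{s_j} \circ \Bt{s}^{-\epsilon_j} \in \BT(\TT)$, so every generator of $\BT(\tilt{\TT}{\sharp}{\gamma})$ lies in $\BT(\TT)$. The reverse inclusion follows by the symmetric argument applied to the backward flip from $\tilt{\TT}{\sharp}{\gamma}$ back to $\TT$ at $\gamma^\sharp$.

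Combining the above with Lemma~\ref{lem:gg} and the connectedness of $\EGp(\surfo)$, an induction on the flip distance yields $\BT(\TT) = \BT(\surfo)$ for every $\TT \in \EGp(\surfo)$. For $\TT$ in another connected component, I would invoke the normality of $\BT(\surfo)$ in $\MCG(\surfo)$, which follows from formula \eqref{eq:formulaB}: conjugating any braid twist $\Bt{\eta}$ by $\psi \in \MCG(\surfo)$ gives $\Bt{\psi(\eta)}$, again a braid twist along a closed arc. Hence if $\TT = \psi(\TT_1)$ for some $\psi \in \MCG(\surfo)$ and $\TT_1 \in \EGp(\surfo)$, then
\[
    \BT(\TT) = \psi \BT(\TT_1) \psi^{-1} = \psi \BT(\surfo) \psi^{-1} = \BT(\surfo).
\]
The main obstacle will be the local case analysis of the key step, in particular verifying the formula $s'_j = \Bt{s}^{\epsilon_j}(s_j)$ in all configurations, including the sub-cases where the four sides of the quadrilateral around $\gamma$ are not all distinct (which can occur due to nontrivial topology of the underlying surface).
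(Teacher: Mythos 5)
Your treatment of the principal component is essentially the paper's own argument: the paper proves flip-invariance of $\BT(\TT)$ by observing that a flip induces a Whitehead move on the dual triangulation (Figure~\ref{fig:WH}), under which the dual arcs change by $\Bt{s}^{\pm1}$ applied to the old ones, so \eqref{eq:formulaB} gives $\BT(\TT_1)=\BT(\TT_2)$; combined with Lemma~\ref{lem:gg} and connectedness this settles every $\TT\in\EGp(\surfo)$. Your local analysis of the duals (only the sides of the flipped quadrilateral change, and they change by a power of $\Bt{s}$) is the same idea, and the degenerate configurations you flag (identified sides of the quadrilateral) are harmless because all you need is that each new dual is the image of an old one under an element of $\langle\Bt{s}\rangle$, not the precise exponent.

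Where you genuinely diverge from the paper is the non-principal components, and there your argument has a gap: you conjugate by a mapping class, \emph{assuming} that $\TT=\psi(\TT_1)$ for some $\psi\in\MCG(\surfo)$ and $\TT_1\in\EGp(\surfo)$, but you never establish that such $\psi$ exists --- and this existence is exactly the nontrivial content for those components. It is true and fillable: by Remark~\ref{rem:c.c.}, $F_*$ is surjective from every component, so the component of $\TT$ contains some $\TT''$ with $F(\TT'')=F(\TT_0)$; a change-of-coordinates argument (simultaneously ambient-isotope both arc systems onto a common representative of $\T_0$ in $\surf$, then move the decorating points within the triangles) produces $\psi\in\MCG(\surfo)$ with $\psi(\TT_0)=\TT''$; since flips commute with mapping classes, $\psi(\EGp(\surfo))$ is the whole component of $\TT$, giving the desired $\TT_1$. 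The normality step itself ($\psi\BT(\surfo)\psi^{-1}=\BT(\surfo)$ and $\psi\BT(\TT_1)\psi^{-1}=\BT(\psi(\TT_1))$ via \eqref{eq:formulaB}) is fine. The paper avoids all of this more cheaply: it notes that the proof of Lemma~\ref{lem:gg} only uses that the chosen triangulation satisfies the no-double-arrow condition of Lemma~\ref{lem:ini}, picks such a triangulation inside the given component, and reruns that argument there, then spreads the conclusion over the component by flip-invariance. So either supply the change-of-coordinates lemma or adopt the paper's shortcut; as written, the final step is an unproven assumption.
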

\begin{proof}
First, if $\TT_1$ and $\TT_2$ are related by a flip,
then their dual graphs are related by a Whitehead move, with respect to the corresponding
closed arc $\eta$ (which is unchanged under the flip), see Figure~\ref{fig:WH}.
Notice that the changes of closed arcs in $\TT_i^*$ are given by the braid twist
$\operatorname{B}_{\eta}^{\pm1}$.
Then by \eqref{eq:formulaB} it is straightforward to see that $\BT(\TT_1)=\BT(\TT_2)$.
By Lemma~\ref{lem:gg}, the proposition holds for any $\TT\in\EGp(\surfo)$.

As for $\TT$ in other connected components of $\EG(\surfo)$,
we can always find one triangulation in that component satisfying
the condition in Lemma~\ref{lem:ini}.
Then Lemma~\ref{lem:gg} would apply to that triangulation and thus
the proposition holds for any $\TT\in\EG(\surfo)$.
\end{proof}

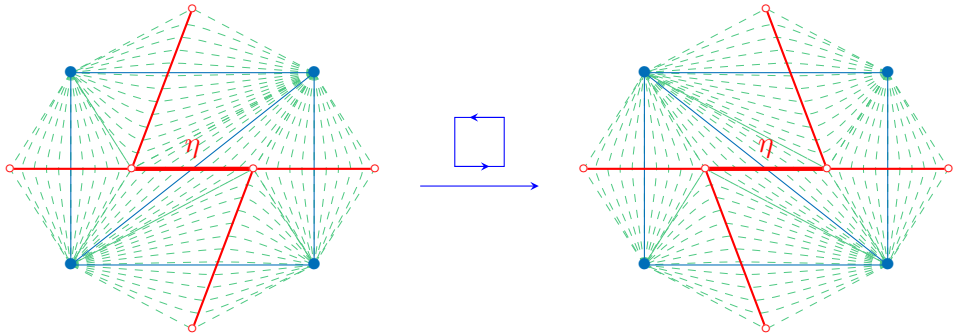
\begin{figure}[ht]\centering
\begin{tikzpicture}[xscale=-.4,yscale=.425]
    \path (4,3) coordinate (v2)
          (-4,3) coordinate (v1)
          (2,0) coordinate (v3)
          (0,5) coordinate (v4);
  \draw[blue!30!green!70, dashed,very thin] plot [smooth,tension=0] coordinates {(v1)(v3)(v2)};
  \draw[blue!30!green!70, dashed,very thin] plot [smooth,tension=0] coordinates {(v1)(v4)(v2)};
  \foreach \j in {.1, .18, .26, .34, .42, .5,.58, .66, .74, .82, .9}
    {
      \path (v3)--(v4) coordinate[pos=\j] (m0);
      \draw[blue!30!green!70, dashed,very thin] plot [smooth,tension=.3] coordinates {(v1)(m0)(v2)};
    }
    \path (4,-3) coordinate (v1)
          (-4,-3) coordinate (v2)
          (-2,0) coordinate (v3)
          (0,-5) coordinate (v4);
  \draw[blue!30!green!70, dashed,very thin] plot [smooth,tension=0] coordinates {(v1)(v3)(v2)};
  \draw[blue!30!green!70, dashed,very thin] plot [smooth,tension=0] coordinates {(v1)(v4)(v2)};
  \foreach \j in {.1, .18, .26, .34, .42, .5,.58, .66, .74, .82, .9}
    {
      \path (v3)--(v4) coordinate[pos=\j] (m0);
      \draw[blue!30!green!70, dashed,very thin] plot [smooth,tension=.3] coordinates {(v1)(m0)(v2)};
    }
    \path (4,-3) coordinate (v2)
          (-4,3) coordinate (v1)
          (2,0) coordinate (v3)
          (-2,0) coordinate (v4);
  \draw[blue!30!green!70, dashed,very thin] plot [smooth,tension=0] coordinates {(v1)(v3)(v2)};
  \draw[blue!30!green!70, dashed,very thin] plot [smooth,tension=0] coordinates {(v1)(v4)(v2)};
  \foreach \j in {.13,.26,.39,.87,.74,.61}
    {
      \path (v3)--(v4) coordinate[pos=\j] (m0);
      \draw[blue!30!green!70, dashed,very thin] plot [smooth,tension=.3] coordinates {(v1)(m0)(v2)};
    }
    \path (4,3) coordinate (v1)
          (4,-3) coordinate (v2)
          (2,0) coordinate (v3)
          (6,0) coordinate (v4);
  \draw[blue!30!green!70, dashed,very thin] plot [smooth,tension=0] coordinates {(v1)(v3)(v2)};
  \draw[blue!30!green!70, dashed,very thin] plot [smooth,tension=0] coordinates {(v1)(v4)(v2)};
  \foreach \j in {.1,.2,.3,.4,.5,.6,.7,.8,.9}
    {
      \path (v3)--(v4) coordinate[pos=\j] (m0);
      \draw[blue!30!green!70, dashed,very thin] plot [smooth,tension=.3] coordinates {(v1)(m0)(v2)};
    }
    \path (-4,-3) coordinate (v2)
          (-4,3) coordinate (v1)
          (-2,0) coordinate (v3)
          (-6,0) coordinate (v4);
  \draw[blue!30!green!70, dashed,very thin] plot [smooth,tension=0] coordinates {(v1)(v3)(v2)};
  \draw[blue!30!green!70, dashed,very thin] plot [smooth,tension=0] coordinates {(v1)(v4)(v2)};
  \foreach \j in {.1,.2,.3,.4,.5,.6,.7,.8,.9}
    {
      \path (v3)--(v4) coordinate[pos=\j] (m0);
      \draw[blue!30!green!70, dashed,very thin] plot [smooth,tension=.3] coordinates {(v1)(m0)(v2)};
    }
\draw[NavyBlue,thin]
  (4,-3)node{$\bullet$}to(4,3)node{$\bullet$}to(-4,3)node{$\bullet$}to(-4,-3)node{$\bullet$}to(4,-3)to(-4,3);
\draw[red,ultra thick](2,0)to(-2,0);
\draw[red,thick]
  (-6,0)node[white]{\tiny{$\bullet$}}node{\tiny{$\circ$}}to
  (-2,0)node[white]{\tiny{$\bullet$}}node{\tiny{$\circ$}}to
  (2,0)node[white]{\tiny{$\bullet$}}node{\tiny{$\circ$}}to
  (6,0)node[white]{\tiny{$\bullet$}}node{\tiny{$\circ$}}
  (0,5)node[white]{\tiny{$\bullet$}}node{\tiny{$\circ$}}to(2,0)
  (0,-5)node[white]{\tiny{$\bullet$}}node{\tiny{$\circ$}}to(-2,0)
  (0,0)node[above]{$\eta$};
\end{tikzpicture}\quad
\begin{tikzpicture}[scale=1.3, rotate=180]
\draw[blue,<-,>=stealth](3-.6,.7)to(3+.6,.7);
\draw[blue](3-.25,.5-.5)rectangle(3+.25,.5);\draw(3,2.2)node{};
\draw[blue,->,>=stealth](3-.25,.5-.5)to(3+.1,.5-.5);
\draw[blue,->,>=stealth](3+.25,.5)to(3-.1,.5);
\end{tikzpicture}\quad
\begin{tikzpicture}[xscale=.4,yscale=.425]
    \path (4,3) coordinate (v1)
          (-4,3) coordinate (v2)
          (2,0) coordinate (v3)
          (0,5) coordinate (v4);
  \draw[blue!30!green!70, dashed,very thin] plot [smooth,tension=0] coordinates {(v1)(v3)(v2)};
  \draw[blue!30!green!70, dashed,very thin] plot [smooth,tension=0] coordinates {(v1)(v4)(v2)};
  \foreach \j in {.1, .18, .26, .34, .42, .5,.58, .66, .74, .82, .9}
    {
      \path (v3)--(v4) coordinate[pos=\j] (m0);
      \draw[blue!30!green!70, dashed,very thin] plot [smooth,tension=.3] coordinates {(v1)(m0)(v2)};
    }
    \path (4,-3) coordinate (v1)
          (-4,-3) coordinate (v2)
          (-2,0) coordinate (v3)
          (0,-5) coordinate (v4);
  \draw[blue!30!green!70, dashed,very thin] plot [smooth,tension=0] coordinates {(v1)(v3)(v2)};
  \draw[blue!30!green!70, dashed,very thin] plot [smooth,tension=0] coordinates {(v1)(v4)(v2)};
  \foreach \j in {.1, .18, .26, .34, .42, .5,.58, .66, .74, .82, .9}
    {
      \path (v3)--(v4) coordinate[pos=\j] (m0);
      \draw[blue!30!green!70, dashed,very thin] plot [smooth,tension=.3] coordinates {(v1)(m0)(v2)};
    }
    \path (4,-3) coordinate (v2)
          (-4,3) coordinate (v1)
          (2,0) coordinate (v3)
          (-2,0) coordinate (v4);
  \draw[blue!30!green!70, dashed,very thin] plot [smooth,tension=0] coordinates {(v1)(v3)(v2)};
  \draw[blue!30!green!70, dashed,very thin] plot [smooth,tension=0] coordinates {(v1)(v4)(v2)};
  \foreach \j in {.13,.26,.39,.87,.74,.61}
    {
      \path (v3)--(v4) coordinate[pos=\j] (m0);
      \draw[blue!30!green!70, dashed,very thin] plot [smooth,tension=.3] coordinates {(v1)(m0)(v2)};
    }
    \path (4,3) coordinate (v1)
          (4,-3) coordinate (v2)
          (2,0) coordinate (v3)
          (6,0) coordinate (v4);
  \draw[blue!30!green!70, dashed,very thin] plot [smooth,tension=0] coordinates {(v1)(v3)(v2)};
  \draw[blue!30!green!70, dashed,very thin] plot [smooth,tension=0] coordinates {(v1)(v4)(v2)};
  \foreach \j in {.1,.2,.3,.4,.5,.6,.7,.8,.9}
    {
      \path (v3)--(v4) coordinate[pos=\j] (m0);
      \draw[blue!30!green!70, dashed,very thin] plot [smooth,tension=.3] coordinates {(v1)(m0)(v2)};
    }
    \path (-4,-3) coordinate (v1)
          (-4,3) coordinate (v2)
          (-2,0) coordinate (v3)
          (-6,0) coordinate (v4);
  \draw[blue!30!green!70, dashed,very thin] plot [smooth,tension=0] coordinates {(v1)(v3)(v2)};
  \draw[blue!30!green!70, dashed,very thin] plot [smooth,tension=0] coordinates {(v1)(v4)(v2)};
  \foreach \j in {.1,.2,.3,.4,.5,.6,.7,.8,.9}
    {
      \path (v3)--(v4) coordinate[pos=\j] (m0);
      \draw[blue!30!green!70, dashed,very thin] plot [smooth,tension=.3] coordinates {(v1)(m0)(v2)};
    }
\draw[NavyBlue,thin]
  (4,-3)node{$\bullet$}to(4,3)node{$\bullet$}to(-4,3)node{$\bullet$}to(-4,-3)node{$\bullet$}to(4,-3)to(-4,3);
\draw[red,ultra thick](2,0)to(-2,0);
\draw[red,thick]
  (-6,0)node[white]{\tiny{$\bullet$}}node{\tiny{$\circ$}}to
  (-2,0)node[white]{\tiny{$\bullet$}}node{\tiny{$\circ$}}to
  (2,0)node[white]{\tiny{$\bullet$}}node{\tiny{$\circ$}}to
  (6,0)node[white]{\tiny{$\bullet$}}node{\tiny{$\circ$}}
  (0,5)node[white]{\tiny{$\bullet$}}node{\tiny{$\circ$}}to(2,0)
  (0,-5)node[white]{\tiny{$\bullet$}}node{\tiny{$\circ$}}to(-2,0)
  (0,0)node[above]{$\eta$};
\end{tikzpicture}
\caption{The Whitehead move, as the flip of the dual triangulations (red)}
\label{fig:WH}
\end{figure}

Besides, the closed arcs are `reachable', in the following sense.
\begin{proposition}\label{pp:oa}
Let $\TT\in\EG(\surfo)$.
For any $\eta\in\cA(\surfo)$,
there exists $s\in\TT^*$ and $b\in\BT(\surfo)$ such that $\eta=b(s)$, i.e.
$$\cA(\surfo)=\BT(\surfo)\cdot\TT^*.$$
\end{proposition}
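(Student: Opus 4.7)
The plan is to first establish the statement for the initial triangulation $\TT_0$ by induction on $\Int(\TT_0,\eta)$ using Lemma~\ref{lem:dcp}, and then propagate the result along edges of the exchange graph, using the fact that a flip of $\TT$ induces a Whitehead move on $\TT^*$ realized by a braid twist.

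First I would treat $\TT=\TT_0$. In the base case $\Int(\TT_0,\eta)=1$, the arc $\eta$ lies in $\TT_0^*$, so we can take $b=\id$. For the inductive step, given $\eta\in\cA(\surfo)$ with $\Int(\TT_0,\eta)\geq 2$, apply Lemma~\ref{lem:dcp}~$\widetilde{2}^\circ$ to obtain $\alpha,\beta\in\cA(\surfo)$ with $\Int(\alpha,\beta)=\tfrac12$, $\eta=\Bt{\alpha}(\beta)$, and $\Int(\TT_0,\alpha),\Int(\TT_0,\beta)<\Int(\TT_0,\eta)$. By induction, $\beta=b_\beta(s)$ for some $s\in\TT_0^*$ and $b_\beta\in\BT(\surfo)$; since $\alpha\in\cA(\surfo)$, we also have $\Bt{\alpha}\in\BT(\surfo)$, hence
\[
\eta=\Bt{\alpha}(\beta)=\bigl(\Bt{\alpha}\circ b_\beta\bigr)(s)\in\BT(\surfo)\cdot\TT_0^*.
\]

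Next I would extend the result to every $\TT$ in the connected component $\EGp(\surfo)$ by induction on the flip-distance from $\TT_0$. Suppose the claim holds for $\TT_1$ and $\TT_2$ is obtained from $\TT_1$ by a single (forward or backward) flip at some $\gamma\in\TT_1$ with dual $s_\gamma\in\TT_1^*$. As noted in the proof of Proposition~\ref{pp:gg} (cf.\ Figure~\ref{fig:WH}), $\TT_2^*$ coincides with $\TT_1^*$ except for the arcs sharing an endpoint with $s_\gamma$, and each such modified arc in $\TT_1^*$ is obtained from the corresponding arc in $\TT_2^*$ by the braid twist $\Bt{s_\gamma}^{\pm 1}\in\BT(\surfo)$. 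Thus every $s\in\TT_1^*$ can be written as $s=b_{12}(s')$ for some $s'\in\TT_2^*$ and $b_{12}\in\BT(\surfo)$. Combining with the inductive hypothesis, any $\eta\in\cA(\surfo)$ equals $b(s)=(b\circ b_{12})(s')$ with $s'\in\TT_2^*$, as required.

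Finally, for a triangulation $\TT$ in another connected component of $\EG(\surfo)$, the proof of Proposition~\ref{pp:gg} provides a triangulation $\TT_0'$ in the same component satisfying Lemma~\ref{lem:ini}; the argument of the first paragraph applies verbatim (with $\TT_0'$ in place of $\TT_0$) to give $\cA(\surfo)=\BT(\surfo)\cdot(\TT_0')^*$, and the second paragraph then transports the conclusion to $\TT$.

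The only step that requires genuine care is the inductive decomposition: we must check that when we invoke the induction on $\beta$, the resulting element $\Bt{\alpha}\circ b_\beta$ really lies in $\BT(\surfo)$ — this is automatic from $\alpha\in\cA(\surfo)$ and Definition~\ref{def:bt}. The propagation along flips is bookkeeping once Figure~\ref{fig:WH} is accepted, so the heart of the proof is the $\Int(\TT_0,\eta)$-induction built on Lemma~\ref{lem:dcp}.
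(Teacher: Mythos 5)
Your proposal is correct and follows essentially the same route as the paper: induction on $\Int(\TT_0,\eta)$ via Lemma~\ref{lem:dcp} for the initial triangulation, propagation along flips through the Whitehead move (so that $\BT(\surfo)\cdot\TT_1^*=\BT(\surfo)\cdot\TT_2^*$), and the same reduction for other connected components as in the proof of Proposition~\ref{pp:gg}. The only difference is that you spell out the bookkeeping (e.g.\ applying the induction only to $\beta$ and using $\Bt{\alpha}\in\BT(\surfo)$ directly), which the paper leaves implicit.
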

\begin{proof}
Consider the case when $\TT=\TT_0$ first.
Then this follows easily by induction on $\Int(\TT_0,\eta)$, using Lemma~\ref{lem:dcp}.
Second, by the Whitehead move (cf. Figure~\ref{fig:WH}),
if $\TT_1$ and $\TT_2$ are related by a flip, then
$$\BT(\surfo)\cdot\TT_1^*=\BT(\surfo)\cdot\TT_2^*.$$
Therefore the proposition holds for $\TT\in\EGp(\surfo)$.
Finally, as in the last paragraph of the proof of Proposition~\ref{pp:gg},
we deduce that the proposition holds for any $\TT\in\EG(\surfo)$.
\end{proof}

\subsection{Centers}
We recall the definition of the braid groups (a.k.a. Artin groups) of type $A$ and $\widetilde{A}$.
\begin{definition}
Suppose that $Q$ is a quiver or a diagram as in \eqref{eq:labeling}:
\begin{gather}\label{eq:labeling}
\begin{array}{llr}
    \xymatrix{A_{n}:} \quad &
    \xymatrix{
        1 \ar@{-}[r]& 2 \ar@{-}[r]& \cdots \ar@{-}[r]& n }         \\
   \xymatrix@R=0.5pc{\widetilde{A_{n}}:} \quad &
    \xymatrix{
        1 \ar@{-}[r]& 2 \ar@{-}[r]& \cdots& \ar@{-}[r]& n\text{-}1 \\
        &&0\ar@{-}[urr]\ar@{-}[ull]}                                                           .
\end{array}
\end{gather}
Denote by $\Br(Q)$ the braid group associated to $Q$,
generated by $\mathbf{b}=\{b_i\mid i\in Q_0\}$ subject to the relations
\begin{gather*}
\begin{cases}
    b_j b_i b_j=b_i b_j b_i,  &
    \text{there is exactly one arrow between $i$ and $j$ in $Q$},\\
    b_i b_j=b_j b_i,  &  \text{otherwise}.
\end{cases}
\end{gather*}
\end{definition}
Let $Z_0^{\BT}$ be the center of $\BT(\surfo)$ and $\BT_*(\surfo)=\BT(\surfo)/Z_0^{\BT}$.
\begin{itemize}
\item If $\surfo$ is a polygon, then $\BT(\surfo)\cong\Br({A_{n}})$
and $Z_0^{\BT}$ is the infinite cyclic group generated by $\Dehn{\partial\surfo}$.
\item If $\surfo$ is an annulus, then $\BT(\surfo)\cong\Br({\widetilde{A_{n}}})$
and $Z_0^{\BT}=1$ (\cite{KP}).
\end{itemize}
We will show that $Z_0^{\BT}=1$ holds for all other cases.
Denote the boundary components of $\surfo$ by $\partial_j ,1\leq j\leq |\partial\surf|$.

\begin{lemma}\label{lem:cut}
By cutting along the (initial) closed arcs in $\TT_0^*$,
$\surfo$ will be divided into $m$ annuli $\mathbf{A}_i, 1\leq i\leq m$,
such that $\partial_i$ is a boundary component of $\mathbf{A}_i$.
\end{lemma}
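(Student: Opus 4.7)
The plan is to combine an Euler characteristic computation with a connectedness argument. Let $\Sigma_0$ denote the surface with boundary obtained from $\surfo$ by cutting along all closed arcs of $\TT_0^*$.

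First I would compute the Euler characteristic of $\TT_0^*$ viewed as a $1$-complex with $\aleph$ vertices (the decorating points) and $n$ edges. Substituting \eqref{eq:n} and \eqref{eq:Tri} gives $\chi(\TT_0^*) = \aleph - n = 2 - 2g - |\partial\surf| = \chi(\surfo)$. Taking a tubular neighborhood $N$ of $\TT_0^*$ (which retracts onto $\TT_0^*$) and applying additivity of Euler characteristic to $\surfo = N \cup \Sigma_0$, with $N\cap \Sigma_0$ a disjoint union of circles, one obtains $\chi(\Sigma_0) = \chi(\surfo) - \chi(\TT_0^*) = 0$.

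Next I would show that every connected component of $\Sigma_0$ is an annulus. Inside each triangle $T$ of $\TT_0$ the half-arcs of $\TT_0^*$ emanating from $Z_T$ partition $T$ into regions, each of which contains a vertex of $T$ and hence a marked point on $\partial\surfo$; so every component of $\Sigma_0$ meets $\partial\surfo$. Since decorating points lie in the interior of $\surfo$ and marked points lie on $\partial\surfo$, each boundary circle of $\Sigma_0$ is either entirely an original $\partial_j$ or entirely a new \emph{cut cycle} produced by the cutting, with no mixing. A hypothetical disk component would have a single boundary cycle: it cannot be a cut cycle (the component meets $\partial\surfo$ at a marked point) and it cannot be a $\partial_j$ (this would force $\surfo$ to be a disk with $\TT_0^* = \emptyset$, contradicting $n\geq 1$). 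Hence no disk components; every component has $\chi\leq 0$; and the constraint $\sum\chi=0$ forces each component to be an annulus.

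Finally I would match components to boundary components. Each $\partial_j$ lies in a unique component, so there are at most $|\partial\surf|$ components, each carrying at least one $\partial_j$. If some annulus component $\mathbf{A}$ had both its boundary circles among the $\partial_j$'s, then $\partial\mathbf{A}\subset\partial\surfo$ would make $\mathbf{A}$ both open and closed in $\surfo$; connectedness forces $\mathbf{A}=\surfo$ and $\TT_0^*=\emptyset$, again contradicting $n\geq 1$. So each annulus contains exactly one $\partial_j$, giving the bijection $\mathbf{A}_i\leftrightarrow\partial_i$. The delicate part, I expect, is the Euler characteristic bookkeeping for the decorating points, which simultaneously serve as $0$-cells of $\TT_0^*$ and interior points of $\surfo$; once that is pinned down, the rest of the argument is a clean combinatorial--topological chase.
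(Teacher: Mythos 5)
Your argument is correct, but it takes a genuinely different route from the paper. The paper's proof enlarges the cut locus: it forms the \emph{full dual} $\widehat{\TT_0^*}$ by adding, inside each triangle, the arc joining its decorating point to the midpoint of each boundary segment; cutting along $\widehat{\TT_0^*}$ visibly yields $|\M|$ disk components, one marked point apiece, and re-gluing along the boundary-segment duals assembles these disks into exactly one annulus per boundary component. You instead run an Euler-characteristic count ($\chi(\TT_0^*)=\aleph-n=\chi(\surfo)$, hence the cut surface has total $\chi=0$) and then eliminate disk components, and components carrying two original boundary circles, by the marked-point and connectedness arguments. Both are sound: the paper's version produces an explicit cell decomposition from which the ``one $\partial_i$ per annulus'' statement is immediate, while yours avoids the re-gluing step and isolates the numerical coincidence $\aleph-n=\chi(\surfo)$ that drives the result. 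Two small points you should pin down to make the bookkeeping you worry about airtight: (i) the identity $\chi(\TT_0^*)=\aleph-n$ uses that every decorating point is an endpoint of some dual closed arc, i.e.\ that no triangle of $\TT_0$ has all three sides on $\partial\surfo$ --- this holds because such a triangle would be an entire connected component of $\surf$, forcing $n=0$, which is excluded; (ii) a compact component with nonempty boundary and $\chi=0$ could a priori be a M\"obius band, which is ruled out since $\surfo$ carries a fixed orientation.
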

\begin{proof}
For each boundary segment $\gamma\subset\partial\surfo$ that is in a triangle $T$ in $\TT_0$
with decorating point $Z$, denote by $\gamma^*$ its dual, which is
the simple arc in $T$ (unique up to isotopy) connecting $Z$ and the midpoint of $\gamma$.
Call the union of $\TT_0^*$ and the arcs $\gamma^*$ as above
(for all segments $\gamma$ in $\partial\surfo$) the full dual of $\TT_0$.
Denote it by $\widehat{\TT_0^*}$, see Figure~\ref{fig:full dual} for example.

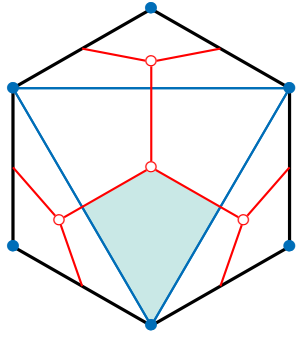
\begin{figure}[ht]\centering
\begin{tikzpicture}[scale=.35,rotate=60]
\draw[white, fill=Emerald!20](0,0)to(150:3)to(-150:6)to(-90:3)to(0,0);
\foreach \j in {1,...,6}{\draw[very thick](60*\j+30:6)to(60*\j-30:6);}
\foreach \j in {1,...,6}{\draw[NavyBlue,thick](120*\j-30:6)node{$\bullet$}to(120*\j+90:6)
    (120*\j-90:6)node{$\bullet$};}
\foreach \j in {1,...,3}{  \draw[thick,red](30+120*\j:4)to(0,0)
    (30+120*\j:4)to(60+120*\j:5.196)(30+120*\j:4)to(120*\j:5.196);}
\foreach \j in {1,...,3}{  \draw(30-120*\j:4)node[white]{$\bullet$}node[red]{$\circ$};}
\draw(0,0)node[white]{$\bullet$}node[red]{$\circ$};
\end{tikzpicture}
  \caption{The full dual of a triangulation}
  \label{fig:full dual}
\end{figure}

Then the surface $\surfo-\widehat{\TT_0^*}$ obtained from $\surfo$
by cutting along all arcs in $\widehat{\TT_0^*}$ satisfies the following:
\begin{itemize}
\item it consists of $|\M|$ connected components,
each of which contains exactly one marked point in $\M$;
\item each component is a disk, since it can be obtained
by gluing many quadrilaterals (cf. the shaded area in Figure~\ref{fig:full dual})
along some segment containing the marked point in that component.
\end{itemize}
Further, by gluing back along the arcs dual to boundary segments in $\surfo$,
we deduce that the surface $\surfo-{\TT_0^*}$ obtained from $\surfo$
by cutting along all arcs in ${\TT_0^*}$ satisfies the following:
\begin{itemize}
\item it consists of $|\partial|$ connected components;
\item each component $\mathbf{A}_i$ is an annulus,
such that one of the boundary components of $\mathbf{A}_i$
is a boundary component of $\surfo$.
\end{itemize}
Thus the lemma follows.
\end{proof}

\begin{proposition}\label{pp:trivial}
If $\surfo$ is neither a polygon nor an annulus, then $Z_0^{\BT}=1$.
\end{proposition}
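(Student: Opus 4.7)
The plan is to pin down any central element $z \in Z_0^{\BT}$ as a product of boundary-parallel Dehn twists and then kill it via the forgetful map $F_*$. Since $z$ commutes with every generator of $\BT(\surfo)$, for each $\eta\in\cA(\surfo)$ the conjugation formula \eqref{eq:formulaB} yields
\[
  \Bt{z(\eta)} \;=\; z\,\Bt{\eta}\,z^{-1} \;=\; \Bt{\eta}.
\]
Non-isotopic simple closed arcs in $\surfo$ give distinct braid twists (the analogue for braid twists of the classical injectivity of the Dehn-twist map on simple closed curves), so $z(\eta)=\eta$ in $\MCG(\surfo)$ for every $\eta\in\cA(\surfo)$. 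In particular $z$ preserves every arc in $\TT_0^*$, and after a preliminary isotopy we may assume $z$ restricts to the identity on $\TT_0^*$.

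Next, invoke Lemma~\ref{lem:cut} and cut $\surfo$ along $\TT_0^*$ into annuli $\mathbf{A}_1,\dots,\mathbf{A}_m$, with $m=|\partial\surf|$ and each $\mathbf{A}_i$ containing the boundary component $\partial_i$. The restriction $z_i$ of $z$ to $\mathbf{A}_i$ fixes $\partial\mathbf{A}_i$ pointwise; since $\mathbf{A}_i$ has no marked or decorating points in its interior (all decorating points sit on the cut boundary), its mapping class group rel boundary is infinite cyclic, generated by the Dehn twist along any essential simple closed curve—in particular by $\Dehn{\partial_i}$. Hence $z_i=\Dehn{\partial_i}^{k_i}$ for some $k_i\in\ZZ$, and regluing produces
\[
  z \;=\; \prod_{i=1}^{m}\Dehn{\partial_i}^{k_i}\quad\text{in }\MCG(\surfo).
\]

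Finally, push down through the surjection $F_*\colon\MCG(\surfo)\twoheadrightarrow\MCG(\surf)$ of \eqref{eq:surjection}. By \eqref{eq:F=1}, $z\in\BT(\surfo)\subset\ker F_*$, so $\prod_i\Dehn{\partial_i}^{k_i}=1$ in $\MCG(\surf)$. When $\surf$ is neither a polygon nor an annulus, no two distinct boundary components cobound an embedded annulus, and the boundary-parallel Dehn twists $\Dehn{\partial_1},\dots,\Dehn{\partial_m}$ are independent in $\MCG(\surf)$ (a standard fact from mapping class group theory). Therefore every $k_i$ vanishes, so $z=1$ and $Z_0^{\BT}=1$.

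The main obstacle is the injectivity of the braid-twist map used in the first paragraph: that two non-isotopic simple closed arcs in $\surfo$ never yield the same element of $\MCG(\surfo)$. This is the exact counterpart of the injectivity of the Dehn-twist map on simple closed curves and can be verified by tracking, for a small arc $\delta$ transverse to $\eta$, the isotopy class of $\Bt{\eta}(\delta)$, which recovers $\eta$. The remaining geometric input—independence of boundary-parallel Dehn twists in $\MCG(\surf)$—is the precise place where the hypothesis excluding the polygon and annulus cases is needed; both exclusions are themselves reflected in the special-case analyses of $\S$\ref{sec:Kro} and Remark~\ref{rem:sp}.
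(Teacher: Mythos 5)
Your proof is correct and takes essentially the same route as the paper's: derive $z(\eta)=\eta$ for all $\eta\in\cA(\surfo)$ from the conjugation formula \eqref{eq:formulaB}, cut along $\TT_0^*$ via Lemma~\ref{lem:cut} to place $z$ in the subgroup generated by the boundary Dehn twists $\Dehn{\partial_i}$, and then kill it through $F_*$ using $\BT(\surfo)\subset\ker F_*$ together with the independence of boundary twists in $\MCG(\surf)$, which is exactly where the polygon and annulus cases are excluded. The only cosmetic differences are that you make explicit the injectivity of $\eta\mapsto\Bt{\eta}$ (used implicitly in the paper, along with the observation that $|\Tri|\geq 3$ forces $z$ to fix the decorating points) and that you argue with the exponents $k_i$ directly instead of stating, as the paper does, that $F_*$ restricts to an isomorphism $\Dehn{}(\partial\surfo)\to\Dehn{}(\partial\surf)$.
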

\begin{proof}
Denote by $\Dehn{}(\partial\surfo)$ the subgroup of $\MCG(\surfo)$ generated by
the Dehn twist $\{\Dehn{\partial_i}\}$ of its boundary components.

We claim that
$Z_0^{\BT}\subset\Dehn{}(\partial\surfo).$
Let $z\in Z_0^{\BT}$. Then $z\circ \Bt{\eta}=\Bt{\eta}\circ z$ for any $\eta\in\cA(\surfo)$.
Hence by \eqref{eq:formulaB} we have
\begin{gather}\label{eq:commutes}
    \Bt{z(\eta)}=z\circ\Bt{\eta}\circ z^{-1}=\Bt{\eta}.
\end{gather}
Thus $z(\eta)=\eta$ for any $\eta\in\cA(\surfo)$,
which in particular implies that $z$ preserves $\Tri$ pointwise
(note that $|\Tri|=\aleph\geq3$ in our situation)
and $\TT_0^*$.
By Lemma~\ref{lem:cut},
cutting along closed arcs in $\TT_0^*$
divides $\surfo$ into $m$ annuli $\mathbf{A}_i$,
such that $\partial_i$ is a boundary component of $\mathbf{A}_i$.
Since $z$ preserves all such closed arcs,
it can be realized as composition of some element $z_i\in\MCG(\mathbf{A}_i)$ (where the order of the composition does not matter since they commute with each other).
Note that $\MCG(\mathbf{A}_i)$ is generated by $\Dehn{\partial_i}$,
which implies $z\in\Dehn{}(\partial\surfo)$.
Thus the claim holds.

There is also the subgroup $\Dehn{}(\partial\surf)$
of $\MCG(\surf)$ generated by the Dehn twist along its boundary components
and the obvious induced map $F_*\left(\Dehn{}(\partial\surfo)\right)=\Dehn{}(\partial\surf)$,
which sends $\Dehn{\partial_i}$ to $\Dehn{\partial_i}$.
Since $\surf$ is not a polygon, $\{\Dehn{\partial_i}\}$ are non-trivial
in both $\MCG(\surfo)$ and $\MCG(\surf)$.
Further, since $\surf$ is not an annulus,
$\{\Dehn{\partial_i}\}$ are distinct (and commute with each other).
Therefore, $F_*\colon \Dehn{}(\partial\surfo)\to\Dehn{}(\partial\surf)$
is an isomorphism.

Now combining $Z_0^{\BT}\subset\Dehn{}(\partial\surfo)$ and \eqref{eq:F=1},
we deduce that $F_*(Z_0^{\BT})=1$ in $\MCG(\surf)$ and hence $Z_0^{\BT}=1$ in $\MCG(\surfo)$.
\end{proof}

\section{From closed arcs to perfect objects}\label{sec:5}
\subsection{The Koszul dual and minimal model}
Let $\Gamma_\TT=\Gamma(Q_\TT,W_\TT)$ be the Ginzburg dg algebra obtained from a triangulation $\TT$.
Recall that there is a canonical heart $\h_\TT$ in $\D_{fd}(\Gamma_\TT)$
and let $$S_\TT=\bigoplus_{S\in\Sim\h_\TT} S$$
be the direct sum of the simples in $\h_\TT$.
Consider the (dg) endomorphism algebra
$
    \ee_\TT=\RHom(S_\TT, S_\TT).
$
By \cite{KN}, we have the following derived equivalence:
\begin{gather}\label{eq:DE}
\xymatrix@C=4pc{
    \D_{fd}(\Gamma_\TT) \ar@<.5ex>[rr]^{ \RHom_{\Gamma_\TT}(S_\TT, ?) }
        \ar@{<-}@<-.5ex>[rr]_{ ?\otimes^{\mathbf{L}}_{\ee_{\TT}}S_\TT }  && \per\ee_\TT,
}\end{gather}
In particular, $\{S\}_{S\in\h_\Gamma}$ in $\D_{fd}(\Gamma_\TT)$ become
(indecomposable) projectives in $\per\ee_{\TT}$.
By \cite[\S~A.15]{K8},
the multiplications in the A$_\infty$-structure of
the homology of $\ee_{\TT}$ are induced from differentials of $\Gamma_\TT$.
In particular, when $m\geq3$,
the $m$-multiplications are induced from
the $(m+1)$-cycle in the potential $W_\TT$,
which vanish in our case (since we only have 3-cycles in the potential).
This means that $\ee_{\TT}$ is formal and
hence is quasi-isomorphic to its homology (the minimal model), denoted by
\begin{gather}\label{eq:EE}
    \EE_\TT=\Hom^\bullet(S_\TT, S_\TT).
\end{gather}
which is just a graded algebra.
We will identify $\D_{fd}(\Gamma_\TT)$ with $\per\EE_\TT$ when there is no confusion.
Recall that the Ext quiver $\Q{\h}$ of a finite heart $\h$
is the (positively) graded quiver
whose vertices are the simples of $\h$ and
whose graded edges correspond to a basis of
$\End^{>0}(\bigoplus_{S\in\Sim\h} S)$.

\begin{example}\label{ex:2}
The Ext quiver of the canonical heart (in the corresponding 3-CY category)
of the quiver with potential in Example~\ref{ex:1} is shown as follows.
\begin{gather}\label{eq:ex2}
    \xymatrix@R=3pc@C=2.3pc{ &S_2  \ar@(ul,ur)[]^3
    \ar@<.8ex>[dr]^{1}\ar@<.2ex>[dl]^{2}
      \\ S_1\ar@(l,d)[]_3\ar@<.2ex>[rr]^{2}\ar@<.8ex>[ur]^{1}&&
        S_3\ar@(d,r)[]_3\ar@<.2ex>[ul]^{2}
          \ar@<.8ex>[ll]^{1}
  }
\end{gather}
Moreover, the differentials in \eqref{eq:ex} become the following multiplications:
\begin{equation}
\begin{array}{rll}
    \Hom^1(S_{i-1},S_{i})\otimes\Hom^1(S_i,S_{i+1})&\cong&
        \Hom^2(S_{i-1},S_{i+1}),\\
    \Hom^k(S_i, S_{i+k})\otimes\Hom^{3-k}(S_{i+k},S_i)&\cong&\Hom^3(S_i,S_i),
\end{array}\label{eq:rels}
\end{equation}
for $i=1,2,3$ and $k=0,1,2,3$, where $S_{i+3}=S_{i}$ for $i\in\ZZ$.
\end{example}

\begin{notations}
Recall that we have fixed an initial triangulation $\TT_0$.
\begin{itemize}
\item We will write $\Gamma_0$ for $\Gamma_{\TT_0}$ and
similar for $\EE_{\TT_0}$, $\h_{\TT_0}$ and so on.
\item Let the $\Gamma^i=e_i\Gamma_0$ be the indecomposable projective summands of $\Gamma_0$.
\item Let $S_1,\ldots,S_n$ be the simples in $\h_{0}$ which correspond to the projectives above.
Under the derived equivalence \eqref{eq:DE},
the $S_i$ become the (projective) summands of the silting objects $\EE_{0}$ in
$\per\EE_{0}\cong\D_{fd}(\Gamma_0)$.
\end{itemize}
\end{notations}

\subsection{The string model}\label{sec:sm}
\begin{definition}
A dg $\EE_0$-module $X$ is \emph{minimal perfect} if its underlying graded module
(denoted by $|X|$) is of the form
\begin{gather}\label{eq:graded}
    |X|=\bigoplus_{k=1}^l X_k,
\end{gather}
where each $X_k$ is a finite direct sum of shifted copies of direct summands of $\EE_0$
(i.e. copies of $S_j$) whose differential, as a degree $1$ map from $X$ to itself,
is a strictly upper triangular matrix whose entries are in the ideal of $\EE_0$ generated by the arrows in $\Q{\h_0}$. 
\end{definition}

\begin{figure}\centering
\begin{tikzpicture}[yscale=.6,xscale=.8,rotate=0]
\draw[NavyBlue, thick](-2,0)node{$\bullet$}to(2,0)node{$\bullet$}to(0,4)node{$\bullet$}to(-2,0)
    (0,1.5)node[red]{$\circ$};
\draw[red, thick,>=stealth](-1.6,-1) .. controls +(60:.1) and +(-120:.2) .. (-1.1,0);
\draw[red, thick,>=stealth](-1.1,0) .. controls +(60:.1) and +(180:.5) .. (0,.9);

\draw[red, thick,->,>=stealth](1.1,0) .. controls +(120:.1) and +(0:.5) .. (-.1,.9);
\draw[red, thick,>=stealth](1.6,-1) .. controls +(120:.1) and +(-60:.2) .. (1.1,0);

\draw(-1.1,0)node{$\bullet$}node[above]{}(1.1,0)node{$\bullet$}node[above]{};
\end{tikzpicture}
\caption{A digon intersected by some $\gamma$ and $\eta$}
\label{fig:bad}
\end{figure}
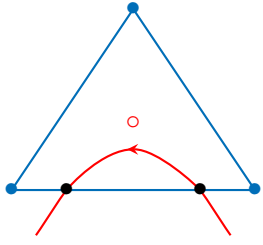
Let $\eta$ be a general closed arc in $\surfo$ such that
it is in a minimal position w.r.t. $\TT_0$.
This is equivalent to saying that there is no digon shown as in Figure~\ref{fig:bad}.
One can associate a minimal perfect dg $\EE_0$ module $X_\eta$ as follows
\begin{itemize}
\item its underlying graded module $|X_\eta|$ has the form as in \eqref{eq:graded}.
\item Let the endpoints of $\eta$ be $Z$ and $Z'$.
  Suppose that from $Z$ to $Z'$, $\eta$ intersects $\TT_0$ at
  $V_1,\ldots,V_m$ accordingly, where $V_i$ is in the arc $\gamma_{j_i}\in\TT_0$
  for $1\leq i\leq m$ and some $1\leq j_i\leq n$ (cf. Figure~\ref{fig:int0}).
  Note that since when choose $\eta$ in a minimal position w.r.t. $\TT_0$,
  $j_i$ are independent of the choice of $\eta$ (only depend the isotopy class of $\eta$).
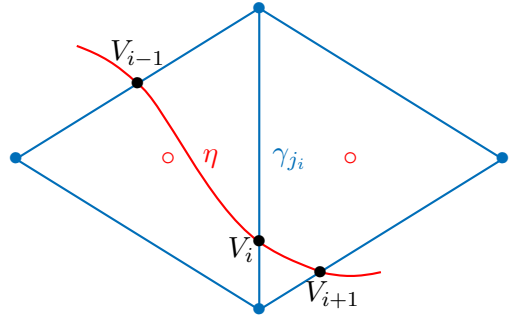
\begin{figure}[t]\centering
\begin{tikzpicture}[yscale=1,xscale=.8,rotate=90]
\draw[NavyBlue, thick](-2,0)node{$\bullet$}to(2,0)node{$\bullet$}to(0,4)node{$\bullet$}to(-2,0)
    to(0,-4)node{$\bullet$}to(2,0)
    (0,1.5)node[red]{$\circ$}(0,-1.5)node[red]{$\circ$};
\draw[](-1.8,-1.2)node[]{$V_{i+1}$} (1.4,2)node[]{$V_{i-1}$}
    (-1.2,.3)node[]{$V_{i}$};
\draw[red, thick,>=stealth](-1.5, -2)to[bend left=10](-1.5,-1)
    .. controls +(60:.1) and +(-120:.5) .. (-1.1,0);
\draw[red, thick,>=stealth](-1.1,0) .. controls +(60:1) and +(-120:.5) .. (1,2)
    to[bend left=-10](1.5,3);
\draw[red](0,.8)node{$\eta$};\draw[NavyBlue](0,-.5)node{$\gamma_{j_i}$};
\draw(-1.5,-1)node{$\bullet$}(-1.1,0)node{$\bullet$}(1,2)node{$\bullet$};
\end{tikzpicture}
\caption{The intersections between $\eta$ and $\TT_0$}
\label{fig:int0}
\end{figure}

\begin{figure}[b]\centering\;
\begin{tikzpicture}[yscale=.6,xscale=.8,rotate=0]
\draw[NavyBlue, thick](-2,0)node{$\bullet$}to(2,0)node{$\bullet$}to(0,4)node{$\bullet$}to(-2,0)
    (0,1.5)node[red]{$\circ$};
\draw[red, thick,>=stealth](-1.5,3.5)to[bend right](-.75,2.5)
    edge[bend right=10,->-=.5,>=stealth](.75,2.5);
\draw[red,thick](.75,2.5)to[bend right](1.5,3.5);

\draw(-.75,2.5)node{$\bullet$}node[above]{$^V$}(.75,2.5)node{$\bullet$}node[above]{$^{W}$};

\draw(0,-1-.5)node{$V \xrightarrow{\;a\;} W$}(0,-1.5-.5)node[below]{$\deg a=1$};

\draw[NavyBlue](0,-.3)node{$\gamma_3$};
\draw[NavyBlue](-1.85,1)node{$\gamma_1$}(1.8,1)node{$\gamma_2$};
\end{tikzpicture}
\qquad
\begin{tikzpicture}[yscale=.6,xscale=.8,rotate=0]
\draw[NavyBlue, thick](-2,0)node{$\bullet$}to(2,0)node{$\bullet$}to(0,4)node{$\bullet$}to(-2,0)
    (0,1.5)node[red]{$\circ$};

\draw[red, thick,>=stealth](-1,3)to[](-1,2);
\draw[red, thick,-<-=.5,>=stealth](-1,2) .. controls +(-90:1.5) and +(-90:1.5) .. (1,2);
\draw[red,thick](1,2)to[](1,3);

\draw(-1,2)node{$\bullet$}node[left]{$^V$}(1,2)node{$\bullet$}node[right]{$^{W}$};
\draw(0,-1-.5)node{$V \xleftarrow{\;a\;} W$}(0,-1.5-.5)node[below]{$\deg a=2$};
\draw[NavyBlue](0,-.3)node{$\gamma_3$};
\draw[NavyBlue](-1.85,1)node{$\gamma_1$}(1.8,1)node{$\gamma_2$};
\end{tikzpicture}
\qquad
\begin{tikzpicture}[yscale=.6,xscale=.8,rotate=0]
\draw[NavyBlue, thick](-2,0)node{$\bullet$}to(2,0)node{$\bullet$}to(0,4)node{$\bullet$}to(-2,0)
    (0,1.5)node[red]{$\circ$};
\draw[red, thick,>=stealth](-1.4,-1) .. controls +(60:.1) and +(-120:.2) .. (-1.1,0);
\draw[red, thick,->-=.75,>=stealth](-1.1,0) .. controls +(60:.1) and +(180:.5) .. (0,2);

\draw[red, thick,>=stealth](1.4,-1) .. controls +(120:.1) and +(-60:.2) .. (1.1,0);
\draw[red, thick,>=stealth](1.1,0) .. controls +(120:.1) and +(0:.5) .. (0,2);

\draw(-1.1,0)node{$\bullet$}node[above]{$^V$}
    (1.1,0)node{$\bullet$}node[above]{$^{W}$};
\draw(0,-1.5)node{$V \xrightarrow{\;a\;} W$}(0,-1.5-.5)node[below]{$\deg a=3$};
\draw[NavyBlue](0,-.3)node{$\gamma_3$};
\draw[NavyBlue](-1.85,1)node{$\gamma_1$}(1.8,1)node{$\gamma_2$};
\end{tikzpicture}
\caption{Inducing graded arrows}
\label{fig:arrows}
\end{figure}
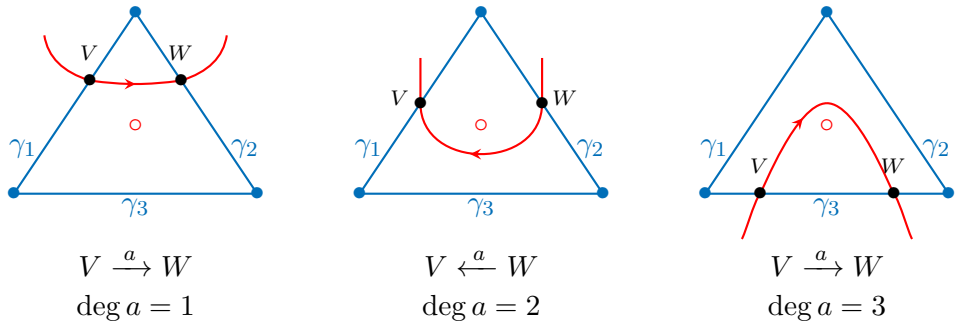
\item Each line segment $V_iV_{i+1}$ in $\eta$ induces a graded arrow $a_i$
  between $V_i$ and $V_{i+1}$
  (clockwise within the corresponding triangle).
  See Figure~\ref{fig:arrows} for how an edge $a$ in the Ext quiver $\hua{Q}(\h_0)$
  induces such a graded arrow $a$ between $V$ and $W$
  respectively.
  Then we obtain a string $H_\eta$, whose vertices are $V_i$'s and whose (graded) arrows
  are those induced arrows.
  \[
     H_\eta\colon\quad
     V_1 \frac{a_1}{\qquad} V_2 \frac{a_2}{\qquad}\cdots \frac{{}}{\qquad}
        V_{m-1} \frac{a_{m-1}}{\qquad} V_m
  \]
\item Each intersection $V_i$ corresponds to a summand $S_{j_i}[\delta_{i}]$ in some $X_{\delta_i}$
    for some integer $\delta_i$.
    So we have
    \[ \bigoplus_{k=1}^l X_k=\bigoplus_{i=1}^m S_{j_i}[\delta_i] .\]
\item For the arrow $a_i$, we have two cases:
  \numbers
  \item If its orientation is $V_{i}\to V_{i+1}$, then
    the degrees $\delta_i$ satisfy
    \begin{gather}\label{eq:1} \delta_{i+1}=\delta_i+\deg a_i-1.\end{gather}
    Moreover, the map $S_{j_i}\to S_{j_{i+1}}[\deg a_i]$ corresponding to $a_i$
    induces a degree $1$ map
    \[
        d_{a_i}\colon S_{j_i}[\delta_i] \xrightarrow{\quad1\quad} S_{j_{i+1}}[\delta_{i+1}].
    \]
  \item If its orientation is $V_{i}\leftarrow V_{i+1}$, then
    the degrees $\delta_i$ satisfy
    \begin{gather}\label{eq:2}\delta_{i}=\delta_{i+1}+\deg a_i-1.\end{gather}
    Moreover, the map $S_{j_{i+1}}\to S_{j_i}[\deg a_i]$ corresponding to $a_i$
    induces a degree $1$ map
    \[
        d_{a_i}\colon S_{j_{i+1}}[\delta_{i+1}] \xrightarrow{\quad1\quad} S_{j_{i}}[\delta_{i}].
    \]
  \ends
\item Finally, the differential $\diff_\eta$ of ${X_\eta}$ is given by the degree 1 map
    \[\diff_\eta=\sum_{i=1}^{m-1} d_{a_i}.\]
\end{itemize}

\begin{lemma}\label{lem:wd}
The complex $X_\eta$ above is well-defined.
\end{lemma}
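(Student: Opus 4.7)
The plan is to verify two things: \textbf{(i)} for any minimal-position representative of $\eta$, the prescribed differential $\diff_\eta$ squares to zero, so that $X_\eta$ is a genuine dg $\EE_0$-module; and \textbf{(ii)} the resulting dg module is independent of choices (the initial degree $\delta_1$ merely shifts the complex, and different minimal-position representatives give the same complex up to isomorphism).

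First I would check (ii), which is the easier part. The degrees $\delta_i$ are determined recursively from $\delta_1$ by \eqref{eq:1} and \eqref{eq:2}, so a different starting value just shifts $X_\eta$ uniformly. Within each triangle $T_i$ of $\TT_0$ containing the segment $V_i V_{i+1}$, the graded arrow $a_i$ (and hence the map $d_{a_i}$) is read off from the local picture by Figure~\ref{fig:arrows}, which is determined by which two edges of $T_i$ the segment enters and exits and how it winds around the decorating point of $T_i$. Any two minimal-position representatives of $\eta$ are related by ambient isotopies of $\surfo - \TT_0$ that preserve each triangle setwise; such isotopies do not affect the combinatorial data $(V_i, j_i, a_i)$, so they produce isomorphic complexes.

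The heart of the lemma is (i). Expanding $\diff_\eta = \sum_{i=1}^{m-1} d_{a_i}$, a composition $d_{a_j} \circ d_{a_i}$ can only be non-zero when the maps share a common summand, which forces $j = i+1$ and forces the orientations of $a_i, a_{i+1}$ to be compatible (both $V_{i}\!\to\!V_{i+1}$ and $V_{i+1}\!\to\!V_{i+2}$, or both reversed). In that case the composition is, up to sign, the product $a_{i+1}\cdot a_i$ in $\EE_0$. The geometric picture is that $a_i$ lives in triangle $T_i$ and $a_{i+1}$ in the adjacent triangle $T_{i+1}$, sharing only the edge $\gamma_{j_{i+1}}$ on which $V_{i+1}$ sits. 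The structural claim needed is:
\[
\text{products of arrows from two distinct triangles of $\TT_0$ vanish in $\EE_0$.}
\]
Granting this, $\diff_\eta^2 = 0$ follows termwise.

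The main obstacle is justifying that structural claim about $\EE_0$. The potential decomposes as $W_{\TT_0} = \sum_T W_T$ with each $W_T$ a single $3$-cycle supported inside one triangle $T$, and we have arranged via Lemma~\ref{lem:ini} that no two triangles share more than one edge (so there are no double arrows and no $3$-cycle of $Q_{\TT_0}$ crosses triangles). The Jacobian-type relations $\diff a^* = \partial_a W$ in $\Gamma_{\TT_0}$, together with the formality of $\ee_{\TT_0}$ (already noted in the paragraph preceding \eqref{eq:EE}, since all higher A$_\infty$-operations would come from $\geq 4$-cycles in the potential and hence vanish), confine the only non-trivial degree-1 $\times$ degree-1 multiplications in $\EE_0$ to those along the $3$-cycle of a single triangle, whose outputs are the degree-$2$ dual arrows; compositions bridging two distinct triangles produce no corresponding element and so are zero. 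This is exactly the vanishing needed, and it completes the verification.
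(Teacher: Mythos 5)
Your argument is correct and is essentially the paper's own proof: you reduce to showing that consecutive compositions $d_{a_{i+1}}d_{a_i}$ vanish, and deduce this from the same two ingredients the paper uses, namely that the multiplication on $\EE_0$ is induced from the (triangle-local) $3$-cycles of the potential, while minimal position together with the choice of $\TT_0$ (no two triangles sharing two edges, no self-folded triangles) forces the two consecutive segments to lie in distinct triangles sharing a single edge, so the corresponding crossings never sit in one triangle. The only caveat is that your justification of the structural claim should not be phrased solely in terms of degree-$1\times$degree-$1$ products: the compositions can also involve a degree-$1$ and a degree-$2$ arrow landing in $\Hom^3$, whose nonvanishing comes from the relations $\diff e_i^*=\sum[a,a^*]$ rather than from $\partial_a W$, but these pair an arrow only with its own dual and hence are again confined to a single triangle, so your stated claim (and the paper's conclusion) still holds.
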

\begin{proof}
We only need to check $\diff_\eta^2=0$, i.e.
$d_{a_{i+1}}d_{a_i}=0$ and $d_{a_{i}}d_{a_{i+1}}=0$
for any $i$ (when they make sense in $\diff_\eta^2$).

On one hand, since $\eta$ is in a minimal position w.r.t. $\TT_0$
and any two triangles in $\TT_0$ share at most one edge, we deduce that
for any $i$, $V_{i-1}, V_i$ and $V_{i+1}$ are not in a single triangle of $\TT_0$.

On the other hand, if $d_{a_{i+1}}d_{a_i}\neq0$,
then there is a non-zero multiplication in
\[
    \Hom^\bullet(S_{j_{i-1}}[\delta_{i-1}], S_{j_{i}}[\delta_{i}])\otimes
    \Hom^\bullet(S_{j_{i}}[\delta_{i}], S_{j_{i+1}}[\delta_{i+1}])
    \to
    \Hom^\bullet(S_{j_{i-1}}[\delta_{i-1}], S_{j_{i+1}}[\delta_{i+1}]).
\]
As such a multiplication is induced from terms in the potential
(\cite[\S~A.15]{K8}), which are 3-cycles,
we deduce that $V_{i_1}, V_i$ and $V_{i+1}$ are in a single triangle of $\TT_0$.
This contradicts the fact mentioned above.
The case when $d_{a_{i}}d_{a_{i+1}}=0$ is similar.

Now we deduce that $\diff_\eta^2=0$ as required.
\end{proof}

\begin{remark}\label{rem:shift}
As we are flexible about the choice of $\delta_1$ and $t_1$,
$X_\eta$ is well-defined up to shifts. In other words, we obtain a map
\begin{gather}\label{eq:X}
    \widetilde{X}\colon \CA(\surfo)\to \per\EE_0/[1],
\end{gather}
\[\quad\qquad\eta\mapsto \widetilde{X}(\eta).\]
We will use the convention that $X_\eta$ will be a representative in the shift orbits
$\widetilde{X}(\eta)$ and the $X[\ZZ]$ denotes the shift orbit that contains $X$.
\end{remark}

\begin{example}\label{ex:0}
By construction,
$\widetilde{X}(s_i)=S_i[\ZZ]$, where the $s_i$ are the `initial' closed arcs in $\TT_0^*$
and $S_i$ are the simples in the canonical heart $\h_0$.
Let us have a look at some non-trivial case.
Take an initial triangulation of a $6$-gon as shown in the left picture in
Figure~\ref{fig:ex0}.
The Ext-quiver of $\h_0$ is as shown in Example~\ref{ex:2}.
Then we have
\begin{gather*}
    \widetilde{X}(\eta_1)=\Cone(S_1\to S_2[1])[\ZZ],\quad
    \widetilde{X}(\eta_2)=\Cone(X\to S_3[3])[\ZZ],
\end{gather*}
where $$X=\Cone(S_1[-2]\to S_3).$$
Here, the maps in the $\Cone$ are the unique maps (up to scaling) between the
corresponding objects.

\begin{figure}[t]\centering
\begin{tikzpicture}[scale=.35,rotate=-120]
\foreach \j in {1,...,6}{\draw[very thick](60*\j+30:6)to(60*\j-30:6);}
\foreach \j in {1,...,6}{\draw[NavyBlue,thick](120*\j-30:6)node{$\bullet$}to(120*\j+90:6)
    (120*\j-90:6)node{$\bullet$};}
\foreach \j in {1,...,3}{  \draw[red](30+120*\j:4)to(0,0);}
\foreach \j in {1,...,3}{  \draw(30-120*\j:4)node[white]{$\bullet$}node[red]{$\circ$};
    \draw(30-120*\j+14:2)node[red]{\text{\footnotesize{$s_\j$}}};}
\draw(0,0)node[white]{$\bullet$}node[red]{$\circ$};
\end{tikzpicture}
\qquad
\begin{tikzpicture}[xscale=-.35,yscale=.35]
\foreach \j in {1,...,6}{\draw[very thick](60*\j+30:6)to(60*\j-30:6);}
\foreach \j in {1,...,6}{\draw[NavyBlue,very thin](120*\j-30:6)node{$\bullet$}
    to(120*\j+90:6)(120*\j-90:6)node{$\bullet$};}

\draw[red,thick](30:4)to[bend right](150:4);
\draw[red,thick](0,0).. controls +(-45:2) and +(0:2) ..(0,-4.5)
    .. controls +(180:2) and +(-180:8) ..(30:4);
\draw(0,3.5)node[red]{\text{\footnotesize{$\eta_1$}}};
\draw(-1,-2)node[red]{\text{\footnotesize{$\eta_2$}}};

\foreach \j in {1,...,3}{  \draw(30+120*\j:4)node[white]{$\bullet$}node[red]{$\circ$};}
\draw(0,0)node[white]{$\bullet$}node[red]{$\circ$};
\end{tikzpicture}
  \caption{An initial triangulation and two closed arcs in a $6$-gon}
  \label{fig:ex0}
\end{figure}
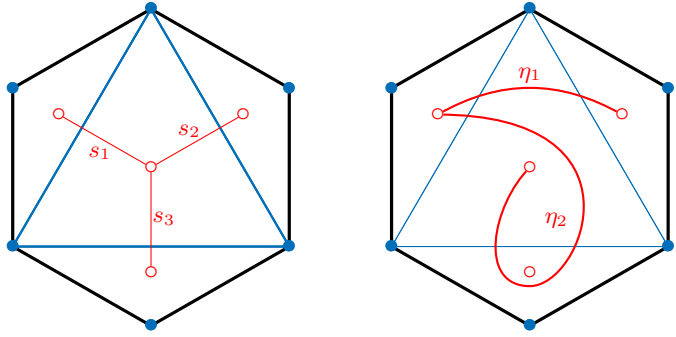
\end{example}

\begin{lemma}\label{lem:Hom}
Let $X_\eta$ be a complex associated to a closed arc $\eta$ as above.
Then
\begin{gather}\label{eq:int}
    \dim\Hom^\bullet(\Gamma^i, X_\eta)=\Int(\gamma_i,\eta),\\
    \dim\Hom^\bullet(\Gamma_0,X_\eta)=
    \sum_{i=1}^n\dim\Hom^\bullet(\Gamma^i,X_\eta)=\Int(\TT_0,\eta).
\end{gather}
\end{lemma}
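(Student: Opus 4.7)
The second equation is immediate by additivity: $\Gamma_0=\bigoplus_{i=1}^n \Gamma^i$ gives $\Hom^\bullet(\Gamma_0,-)=\bigoplus_i \Hom^\bullet(\Gamma^i,-)$, while $\Int(\TT_0,\eta)=\sum_i \Int(\gamma_i,\eta)$ by definition. So the work is to establish the first equation.

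The plan is to read $\dim\Hom^\bullet(\Gamma^i,X_\eta)$ directly off the construction in \S~\ref{sec:sm}. By definition, the underlying graded module is
\[
    |X_\eta|=\bigoplus_{k=1}^m S_{j_k}[\delta_k],
\]
with one summand for each intersection point $V_k$ of a minimal-position representative of $\eta$ with $\TT_0$, and with $V_k$ lying on the arc $\gamma_{j_k}$; hence $|\{k:j_k=i\}|=\Int(\gamma_i,\eta)$. Since $\Hom^\bullet_{\Gamma_0}(\Gamma^i,S_j)=\delta_{ij}\cdot\k$ is concentrated in degree zero, the graded vector space $\Hom^\bullet(\Gamma^i,|X_\eta|)$ has total dimension $\Int(\gamma_i,\eta)$. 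It remains to prove that this count survives passage to cohomology, i.e.\ that the differential $\diff_\eta$ induces the zero differential on $\Hom^\bullet(\Gamma^i,-)$.

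The last step is the main technical obstacle, and it is exactly where minimality of $X_\eta$ enters. By the definition of a minimal perfect complex recalled above, the matrix entries of $\diff_\eta$ lie in the ideal of $\EE_0$ generated by the arrows of $\Q{\h_0}$, all of which have strictly positive degree. I would make the argument precise by induction on $m$: writing $X_\eta$ as an iterated extension (via the distinguished triangles induced by the maps $d_{a_k}$) of shifts of the simples $S_{j_k}$, and applying the exact functor $\Hom^\bullet(\Gamma^i,-)$ yields long exact sequences whose connecting maps are induced by the $a_k$. Since $\deg a_k\geq1$ while $\Hom^\bullet(\Gamma^i,S_{j_k})\cong\k$ sits in degree zero, every such connecting map is zero. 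Consequently each inductive step adds exactly $\delta_{i,j_k}$ to $\dim\Hom^\bullet(\Gamma^i,X_\eta)$, and these contributions accumulate to $\Int(\gamma_i,\eta)$, proving the first equation.
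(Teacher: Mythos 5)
Your proposal is correct and follows essentially the same route as the paper: both rest on the projective–simple duality $\Hom^j(\Gamma^i,S_k[l])=\delta_{ik}\delta_{jl}\k$ together with the observation that every component of $\diff_\eta$ is induced by an arrow of strictly positive degree and hence is killed by $\Hom^\bullet(\Gamma^i,-)$. The paper states this vanishing in one line ($\Hom^\bullet(\Gamma^i,\varsigma)=0$ for the generating morphisms $\varsigma\colon S_i\to S_j[\delta]$), while you unpack the same degree argument via an induction over the summands using distinguished triangles — a harmless elaboration, not a different method.
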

\begin{proof}
First, the projective-simple duality implies
\begin{equation}\label{eq:ijkl}
    \Hom^j(\Gamma^i,S_k[l])=\delta_{ik}\cdot\delta_{jl}\cdot\k,\quad1\leq i,k\leq n;\,\forall j,l\in\ZZ.
\end{equation}
Second, the differential $\diff_\eta$ is generated by the morphisms
$\varsigma\colon S_i\to S_j[\delta]$ in \eqref{eq:EE}, which satisfy
$\Hom^\bullet(\Gamma^i,\varsigma)=0,\quad1\leq i\leq n.$
Thus the lemma follows.
\end{proof}

In particular, we have the following immediate consequence
as the $s_i$ are the only closed arcs that intersect once with $\TT$.
\begin{corollary}\label{lem:inj}
If $X_\eta[\ZZ]=S_i[\ZZ]$ for some initial closed arc $s_i\in\TT_0^*$, then $\eta=s_i$.
\end{corollary}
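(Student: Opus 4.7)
The plan is to read off the intersection number $\Int(\TT_0,\eta)$ from the dimension of $\Hom^\bullet(\Gamma_0,X_\eta)$ via Lemma~\ref{lem:Hom}, and then use the geometric fact that the arcs in $\TT_0^*$ are characterized by having minimal intersection with $\TT_0$.

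More precisely, I would proceed as follows. First, I would choose a representative $X_\eta$ of the shift orbit $\widetilde{X}(\eta)$ that is isomorphic to $S_i$ (not just in the same shift orbit). Using \eqref{eq:ijkl}, the projective--simple duality gives
\[
\dim\Hom^\bullet(\Gamma^k,S_i)=\delta_{ki},\qquad 1\le k\le n,
\]
so $\dim\Hom^\bullet(\Gamma_0,X_\eta)=1$ and moreover $\dim\Hom^\bullet(\Gamma^k,X_\eta)=\delta_{ki}$. By Lemma~\ref{lem:Hom}, this translates into
\[
\Int(\gamma_k,\eta)=\delta_{ki}\qquad\text{for all }1\le k\le n,\qquad\Int(\TT_0,\eta)=1.
\]

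Next, I would invoke the geometric dichotomy between closed arcs and L-arcs recalled just after the intersection numbers are defined: it was already observed that $\Int(\TT_0,\eta)\ge 2$ for any L-arc, so $\eta$ cannot be an L-arc and must lie in $\cA(\surfo)$. Now $\eta$ is a closed arc meeting exactly one element of $\TT_0$, namely $\gamma_i$, and it meets $\gamma_i$ once. Choose a minimal position representative; cutting $\surfo$ along $\TT_0$ only the two triangles $T_1,T_2$ of $\TT_0$ containing $\gamma_i$ on their boundary meet $\eta$, so $\eta$ is contained in the quadrilateral $T_1\cup T_2\cup\gamma_i$ and connects the unique decorating point of $T_1$ to the unique decorating point of $T_2$ while crossing the diagonal $\gamma_i$ exactly once. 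Up to isotopy in $\surfo-\Tri$ there is a unique such arc, and by the definition of the dual triangulation $\TT_0^*$ this is precisely $s_i$. Hence $\eta=s_i$.

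I expect no substantive obstacle; the only subtlety is justifying the last sentence---uniqueness up to isotopy of a simple arc in a quadrilateral joining two interior marked points (the decorations of $T_1$ and $T_2$) while crossing the single diagonal exactly once. This is a standard fact about arcs in a disc with two marked points in its interior, and it is in any case the characterising property that was used to \emph{define} $s_i$ as the dual of $\gamma_i$ (left picture of Figure~\ref{fig:ex0}), so it is really just a matching of definitions.
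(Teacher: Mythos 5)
Your proof is correct and follows essentially the same route as the paper, which deduces the corollary immediately from Lemma~\ref{lem:Hom} together with the observation that the arcs of $\TT_0^*$ are the only (general) closed arcs having total intersection number one with $\TT_0$. The extra details you supply (excluding L-arcs via $\Int(\TT_0,\eta)\geq 2$ and the uniqueness of the arc in the quadrilateral, which is the defining property of the dual arc $s_i$) are exactly the content implicit in the paper's one-line justification.
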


We will prove the following key proposition in \S~\ref{app:B}.
\begin{proposition}\label{pp:key}
Let $\eta_1$ and $\eta_2$ be two general closed arcs in $\CA(\surfo)$.
Choose any representative $X_k$ in $\widetilde{X}(\eta_k)=X_k[\ZZ]$.
Then we have
\numbers
\item If $\eta_k$ is a closed arc, i.e. is in $\cA(\surfo)$,
then  $X_{\eta_k}$ is in $\Sph(\Gamma_0)$.
\item
If $\Int(\eta_1,\eta_2)=0$, then
\begin{gather}\label{eq:=0}
    \Hom^\bullet(X_{\eta_1},X_{\eta_2})=0.
\end{gather}
\item
If $\Int(\eta_1,\eta_2)=\tfrac{1}{2}$, then
\begin{gather}\label{eq:=1}
    \dim\Hom^\bullet(X_{\eta_1},X_{\eta_2})=1.
\end{gather}
\ends
\end{proposition}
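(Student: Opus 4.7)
I would prove the three parts simultaneously by induction on $N=\Int(\TT_0,\eta_1)+\Int(\TT_0,\eta_2)$. The base case is $N=2$, so that $\eta_1,\eta_2\in\TT_0^*$ and thus $X_{\eta_i}=S_{j_i}$ are simple objects in $\h_0$. Part~(1) holds since the simples are 3-spherical. For parts~(2) and~(3), the key input is that $\TT_0$ has no double arrows (Lemma~\ref{lem:ini}), so $\Hom^\bullet(S_{j_1},S_{j_2})$ is either zero or one-dimensional (concentrated in degree $1$ or $2$ according to the direction of the unique arrow between $j_1,j_2$ in the Ext quiver). A direct case-by-case comparison with $\Int(s_{j_1},s_{j_2})\in\{0,\tfrac12\}$ closes the base.

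For the inductive step, assume WLOG that $\Int(\TT_0,\eta_1)\geq2$. By Lemma~\ref{lem:dcp} (applied to $\eta_1$), there exist $\alpha,\beta\in\cA(\surfo)$ with
\[
    \Int(\TT_0,\eta_1)=\Int(\TT_0,\alpha)+\Int(\TT_0,\beta),\quad \Int(\alpha,\beta)=\tfrac12,
\]
and, when $\eta_1\in\cA(\surfo)$, $\eta_1=\Bt{\alpha}(\beta)$; the L-arc case will be handled analogously. The central claim, which I would establish as a separate lemma, is the categorical lift
\begin{gather}\label{eq:propkey}
    \widetilde{X}(\Bt{\alpha}(\beta))=\phi_{X_\alpha}(X_\beta)\cdot[\ZZ]\quad\text{in }\per\EE_0/[1].
\end{gather}
This is proved by reading off the string model of $\Bt{\alpha}(\beta)$ geometrically and matching it term-by-term with the cone of the unique (up to scalar) map $X_\alpha\to X_\beta[k]$ coming from $\dim\Hom^\bullet(X_\alpha,X_\beta)=1$, which in turn is provided by part~(3) of the inductive hypothesis (note $\Int(\TT_0,\alpha)+\Int(\TT_0,\beta)<N$ so induction applies). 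The explicit description of the differential in $X_{\Bt{\alpha}(\beta)}$ follows by tracing how the twist $\Bt{\alpha}$ pushes the arc $\beta$ across the triangle bounded by $\alpha,\beta,\eta_1$ and recording the new intersection sequence with $\TT_0$.

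Granting \eqref{eq:propkey}, part~(1) is immediate: by induction $X_\alpha,X_\beta\in\Sph(\Gamma_0)$, and applying $\phi_{X_\alpha}\in\ST(\Gamma_0)$ keeps the object in $\Sph(\Gamma_0)=\ST(\Gamma_0)\cdot\Sim\h_0$. For parts~(2) and~(3), apply $\Hom^\bullet(-,X_{\eta_2})$ to the defining triangle
\[
    X_\alpha\otimes\Hom^\bullet(X_\alpha,X_\beta)\longrightarrow X_\beta\longrightarrow X_{\eta_1},
\]
giving a long exact sequence whose outer terms are $\Hom^\bullet(X_\alpha,X_{\eta_2})$ and $\Hom^\bullet(X_\beta,X_{\eta_2})$, controlled by induction via $\Int(\alpha,\eta_2)$ and $\Int(\beta,\eta_2)$. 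One then reduces the computation to the purely geometric identity
\[
    \Int(\eta_1,\eta_2)=\Int(\alpha,\eta_2)+\Int(\beta,\eta_2)-2\Int(\alpha,\beta)\cdot\#\{\text{shared endpoint contributions}\},
\]
or, more precisely, a signed relation accounting for the decorating point shared by $\alpha$ and $\beta$. After verifying this, the desired dimension of $\Hom^\bullet(X_{\eta_1},X_{\eta_2})$ drops out.

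The main obstacle is \eqref{eq:propkey}: comparing the string-model differential of the twisted arc with the cone differential is delicate because it requires tracking the local picture near every intersection of $\beta$ with $\alpha$ and with $\TT_0$, together with a careful choice of connecting map so that the formality of $\EE_0$ (and the absence of higher A$_\infty$-operations beyond those coming from the 3-cycles in $W_{\TT_0}$) is used correctly to identify the two dg modules up to homotopy. A secondary subtlety is handling L-arcs uniformly, since $\widetilde{2}^\circ$ in Lemma~\ref{lem:dcp} presumes $\eta\in\cA(\surfo)$; for L-arcs one can still decompose $\eta$ topologically into two closed arcs forming a contractible triangle and show the cone formula survives, with $X_\eta$ arising as $\phi_{X_\alpha}(X_\beta)$ for a suitable orientation.
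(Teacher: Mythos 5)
Your overall architecture (induction on $\Int(\TT_0,\eta_1)+\Int(\TT_0,\eta_2)$, decomposition via Lemma~\ref{lem:dcp}, the twist triangle, and a long exact sequence) matches the paper's strategy, but there are two genuine gaps at exactly the point where the real work lies. First, your induction scheme does not close. Parts $2^\circ$ and $3^\circ$ of the statement only concern pairs with intersection $0$ or $\tfrac12$, so when you apply $\Hom^\bullet(-,X_{\eta_2})$ to the triangle for $\eta_1=\Bt{\alpha}(\beta)$ you may not control the outer terms ``by induction via $\Int(\alpha,\eta_2)$ and $\Int(\beta,\eta_2)$'': if the decomposing segment $l$ from Lemma~\ref{lem:dcp} crosses $\eta_2$, or if the decorating point $Z_0$ is an endpoint of $\eta_2$, then $\Int(\alpha,\eta_2)$ and $\Int(\beta,\eta_2)$ can exceed $\tfrac12$ even though $\Int(\eta_1,\eta_2)\leq\tfrac12$, and the inductive hypothesis says nothing about such pairs. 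This is precisely why the paper runs a \emph{second} induction on $\min\{\Int(\TT_0,\eta_1),\Int(\TT_0,\eta_2)\}$ and splits into cases: when $l$ meets $\eta_2$ it decomposes $\eta_2$ instead (using the truncated segment $l'$), and when it does not, it conjugates the whole pair by $\Bt{\beta}$ (replacing $(\eta_1,\eta_2)$ by $(\alpha,\Bt{\beta}(\eta_2))$) so that the intersection bookkeeping stays within reach of the hypothesis. Your proposed ``purely geometric identity'' with a correction term for shared endpoints is not available from the statement being proven, and even if true it would not by itself complete the argument.

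Second, and more fundamentally, dimension counting in the long exact sequence cannot give the vanishing in $2^\circ$ when $\eta_1$ and the decomposition share the decorating point $Z_0$: there $\Int(\eta_1,\alpha)=\Int(\eta_1,\beta)=\tfrac12$, so both outer terms are one-dimensional and you must show the connecting map is an isomorphism to get cancellation. The paper obtains this from Proposition~\ref{pp:qz2}: the morphisms induced by a common endpoint compose according to the clockwise order of the arcs at $Z_0$, which forces the isomorphism \eqref{eq:iso}; part $3^\circ$ then needs an additional parity (mod $2$) argument together with a case analysis of the possible positions of $\alpha,\beta$ at $Z_0$. Your proposal never identifies this mechanism, and nothing in your setup supplies it. Finally, note that your key claim \eqref{eq:propkey} is essentially Proposition~\ref{pp:ses2} (the triangle imported from \cite{QZ2}) combined with part $3^\circ$; proposing to prove it by a direct string-model comparison does not simplify the problem but rather relocates the technical core that the paper deliberately delegates to the companion paper, so you should either cite that input or be prepared to reprove it in full.
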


An immediate consequence of Proposition~\ref{pp:key}
and Proposition~\ref{pp:ses2} is as follows.

\begin{corollary}\label{cor:int.ts}
Let $\alpha,\beta\in\cA(\surfo)$ with $\Int(\alpha,\beta)=\tfrac{1}{2}$ and $\eta=\Bt{\alpha}(\beta)$.
Then
\begin{gather}\label{eq:stwist+}
    \widetilde{X}(\eta)=\phi_{\widetilde{X}(\alpha)}(\widetilde{X}(\beta)).
\end{gather}
\end{corollary}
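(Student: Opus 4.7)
The plan is straightforward: combine the geometric-to-algebraic transfer of Proposition~\ref{pp:ses2} with the one-dimensional Hom of Proposition~\ref{pp:key}(3), then match the resulting triangle with the definition of $\phi$ in \eqref{eq:phi}.

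First, I would unpack the hypothesis via Example~\ref{ex:1/2}: having $\Int(\alpha,\beta)=\tfrac{1}{2}$ together with $\eta=\Bt{\alpha}(\beta)$ means precisely that $\alpha,\beta,\eta$ bound a contractible triangle in $\surfo$. Proposition~\ref{pp:ses2} then supplies, for suitable representatives $X_\alpha,X_\beta,X_\eta$ of the shift orbits $\widetilde{X}(\alpha),\widetilde{X}(\beta),\widetilde{X}(\eta)$, a distinguished triangle
\begin{gather*}
X_\alpha \longrightarrow X_\beta \longrightarrow X_\eta \longrightarrow X_\alpha[1]
\end{gather*}
in $\D_{fd}(\Gamma_0)$, the orientation being dictated by the orientation of the contractible triangle (and hence matching the direction of the braid twist $\Bt{\alpha}$ as opposed to its inverse $\bt{\alpha}$).

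Next, Proposition~\ref{pp:key}(1) ensures $X_\alpha$ is spherical, and Proposition~\ref{pp:key}(3) gives $\dim\Hom^\bullet(X_\alpha,X_\beta)=1$, concentrated in a single degree. Consequently, the connecting arrow $X_\alpha\to X_\beta$ in the triangle above is, up to scaling, the unique nonzero morphism between them. Comparing with
\begin{gather*}
\phi_{X_\alpha}(X_\beta)=\Cone\bigl(X_\alpha\otimes\Hom^\bullet(X_\alpha,X_\beta)\to X_\beta\bigr)
\end{gather*}
from \eqref{eq:phi}, the evaluation map inside the cone is (a shift of) exactly this unique morphism; therefore $\phi_{X_\alpha}(X_\beta)\cong X_\eta$ in $\D_{fd}(\Gamma_0)$ up to shift.

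Finally, since $\phi$ is well-defined on shift orbits (the observation $\phi_S=\phi_{S[1]}$ recorded just after \eqref{eq:phi}) and commutes with shifts in its argument, passing to shift orbits yields
\begin{gather*}
\phi_{\widetilde{X}(\alpha)}(\widetilde{X}(\beta)) \;=\; \widetilde{X}(\eta) \;=\; \widetilde{X}(\Bt{\alpha}(\beta)),
\end{gather*}
as required. The real content sits inside Proposition~\ref{pp:ses2}: extracting a distinguished triangle with the correct direction and relative shifts from the geometric contractible triangle. Once that triangle is in hand, the one-dimensionality of $\Hom^\bullet(X_\alpha,X_\beta)$ from Proposition~\ref{pp:key}(3) pins down the cone uniquely, so the identification with the spherical twist is essentially definitional.
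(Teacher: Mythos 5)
Your argument is correct and is essentially the paper's own: the corollary is stated there as an immediate consequence of Proposition~\ref{pp:ses2} (the distinguished triangle from the contractible triangle bounded by $\alpha,\eta,\beta$, with the clockwise order encoding the direction of the twist) together with Proposition~\ref{pp:key}, and the identification $X_\eta=\phi_{X_\alpha}(X_\beta)$ via the one-dimensional $\Hom^\bullet(X_\alpha,X_\beta)$ and the cone description \eqref{eq:phi} is exactly the computation recorded as \eqref{eq:stwist} in Appendix~\ref{app:B}. Nothing further is needed.
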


\section{Braid twists versus spherical twists}\label{sec:main}
\subsection{Two twist group actions}
We start with a generalization of Corollary~\ref{cor:int.ts}.

\begin{lemma}\label{lem:actions}
For any $s\in\TT_0^*$ and $\eta\in\cA(\surfo)$, we have
\begin{gather}\label{eq:actions}
    \phi^\varepsilon_{\widetilde{X}(s)}\left( \widetilde{X}(\eta) \right)=
        \widetilde{X}\left( \operatorname{B}_s^\varepsilon(\eta) \right),
\end{gather}
where $\varepsilon\in\{\pm1\}$.
\end{lemma}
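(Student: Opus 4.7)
The plan is to induct on $N=\Int(\TT_0,\eta)$, using Corollary~\ref{cor:int.ts} as the engine: whenever two closed arcs meet with intersection $\tfrac{1}{2}$, their braid twist and the corresponding spherical twist already correspond.  The rest is bookkeeping, combining the decomposition of Lemma~\ref{lem:dcp} with the conjugation identities \eqref{eq:212} and \eqref{eq:formulaB}.  By Remark~\ref{rem:shift} both sides live in $\per\EE_0/[1]$, and since $\phi_{X_s}$ commutes with shifts it descends there.  The case $\varepsilon=-1$ will follow from $\varepsilon=+1$ by substituting $\bt{s}(\eta)$ for $\eta$, so I will only treat $\varepsilon=+1$.

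For the base case $N=1$, we have $\eta=s'\in\TT_0^*$.  If $s=s'$, then $\Bt{s}(s)=s$ as an unoriented arc and $\phi_{X_s}(X_s)=X_s[-2]$ by the $3$-sphericality of $X_s$ (Proposition~\ref{pp:key}(1)); the two sides agree in $\per\EE_0/[1]$.  If $s\neq s'$, the no-double-arrow hypothesis on $Q_{\TT_0}$ (Lemma~\ref{lem:ini}) restricts $\Int(s,s')$ to $\{0,\tfrac{1}{2}\}$: two distinct dual arcs cannot share both decorating endpoints, since otherwise the two triangles adjacent to $\gamma_i$ would coincide with those adjacent to $\gamma_j$, forcing a double edge; and within any triangle the three segments emanating from the unique decorating point meet only at that point.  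The case $\Int(s,s')=0$ is handled by Proposition~\ref{pp:key}(2) (then $\phi_{X_s}(X_{s'})=X_{s'}$ and $\Bt{s}(s')=s'$), and the case $\Int(s,s')=\tfrac{1}{2}$ is Corollary~\ref{cor:int.ts} verbatim.

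For the inductive step $N\geq 2$, we have $\eta\notin\TT_0^*$, so condition $\widetilde{2}^\circ$ of Lemma~\ref{lem:dcp} yields $\alpha,\beta\in\cA(\surfo)$ with $\Int(\alpha,\beta)=\tfrac{1}{2}$, $\eta=\Bt{\alpha}(\beta)$, and $\Int(\TT_0,\alpha),\Int(\TT_0,\beta)<N$.  Corollary~\ref{cor:int.ts} rewrites $\widetilde{X}(\eta)=\phi_{\widetilde{X}(\alpha)}(\widetilde{X}(\beta))$; applying $\phi_{X_s}$ and conjugating via \eqref{eq:212} yields
\[
    \phi_{X_s}\bigl(\widetilde{X}(\eta)\bigr) = \phi_{\phi_{X_s}(\widetilde{X}(\alpha))}\bigl(\phi_{X_s}(\widetilde{X}(\beta))\bigr).
\]
The inductive hypothesis replaces the two inner $\phi_{X_s}$'s by $\widetilde{X}(\Bt{s}(\alpha))$ and $\widetilde{X}(\Bt{s}(\beta))$.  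Since $\Bt{s}$ is a homeomorphism, $\Int(\Bt{s}(\alpha),\Bt{s}(\beta))=\tfrac{1}{2}$, so a final application of Corollary~\ref{cor:int.ts}---combined with the topological identity $\Bt{\Bt{s}(\alpha)}(\Bt{s}(\beta))=\Bt{s}(\Bt{\alpha}(\beta))=\Bt{s}(\eta)$ coming from \eqref{eq:formulaB}---produces $\widetilde{X}(\Bt{s}(\eta))$, closing the induction.

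The main obstacle is really the base case when $s\neq s'$: the whole argument collapses unless $\Int(s,s')\leq\tfrac{1}{2}$, and this is precisely what forces the no-double-arrow hypothesis on $Q_{\TT_0}$ and hence the exclusion of the two degenerate surfaces in Remark~\ref{rem:sp}, to be treated separately in \S~\ref{sec:Kro}.  Apart from that, the proof is a formal induction driven by Corollary~\ref{cor:int.ts} and the conjugation calculus \eqref{eq:212}, \eqref{eq:formulaB}.
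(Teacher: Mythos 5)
Your proof is correct and follows essentially the same route as the paper's: induction on $\Int(\TT_0,\eta)$, decomposing $\eta$ via Lemma~\ref{lem:dcp}, rewriting with Corollary~\ref{cor:int.ts}, and closing the inductive step with the conjugation formulae \eqref{eq:212} and \eqref{eq:formulaB}. The only difference is that you spell out the base case $\eta\in\TT_0^*$ (which the paper dismisses as trivial), and that elaboration is accurate.
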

\begin{proof}
Without loss of generality, we only deal the case for $\varepsilon=1$.
Use induction on $\Int(\TT_0,\eta)$ starting with the trivial case
when $\Int(\TT_0,\eta)=1$, or equivalently, $\eta\in\TT_0^*$.
Now, for the inductive step, consider $\eta$ with $\Int(\TT_0,\eta)=m$
while the lemma holds for any $\eta'$ with $\Int(\TT_0,\eta)<m$.
Applying Lemma~\ref{lem:dcp}, we have $\eta=\Bt{\alpha}(\beta)$ for some $\alpha,\beta$
with $\Int(\alpha,\beta)=\tfrac{1}{2}$.
Twisted by $\Bt{s}$, we have $\Int(\Bt{s}(\alpha),\Bt{s}(\beta))=\tfrac{1}{2}$
and $\Bt{s}(\eta)=\Bt{\Bt{s}(\alpha)}(\Bt{s}(\beta))$.
By \eqref{eq:stwist+}, we have
\begin{gather}\label{eq:sss}
    \widetilde{X}\left( \Bt{s}(\eta) \right)=
    \phi_{ \widetilde{X}\left( \Bt{s}(\alpha) \right) }
    \left(   \widetilde{X}\left( \Bt{s}(\beta) \right)    \right).
\end{gather}
By the inductive assumption,
\begin{gather}\label{eq:ind.ass}
    \phi_{\widetilde{X}(s)}\left( \widetilde{X}(\alpha) \right)=
    \widetilde{X}\left( \Bt{s}(\alpha) \right),\quad
    \phi_{\widetilde{X}(s)}\left( \widetilde{X}(\beta) \right)=
    \widetilde{X}\left( \Bt{s}(\beta) \right).
\end{gather}
So
\[\begin{array}{rll}
    \phi_{\widetilde{X}(s)}\left( \widetilde{X}(\eta) \right)&=&
    \phi_{\widetilde{X}(s)}\left(\phi_{\widetilde{X}(\alpha)}(\widetilde{X}(\beta)) \right)\\
    &=&
    \phi_{\widetilde{X}(s)}\circ\phi_{\widetilde{X}(\alpha)}\circ\phi_{\widetilde{X}(s)}^{-1}
    \left(\phi_{\widetilde{X}(s)}(\widetilde{X}(\beta)) \right)\\
    &=&
    \phi_{ \phi_{\widetilde{X}(s)}\left( \widetilde{X}(\alpha) \right) }
    \left(\phi_{\widetilde{X}(s)}(\widetilde{X}(\beta)) \right)\\
    &=&
    \phi_{ \widetilde{X}\left( \Bt{s}(\alpha) \right) }
    \left(   \widetilde{X}\left( \Bt{s}(\beta) \right)    \right)\\
    &=&\widetilde{X}\left( \Bt{s}(\eta) \right),
\end{array}\]
where the first equality follows from \eqref{eq:stwist+},
the third equality follows from \eqref{eq:212},
the fourth equality follows from \eqref{eq:ind.ass}
and the last equality follows from \eqref{eq:sss},
which completes the proof.
\end{proof}

\begin{remark}
Let $Z_0^{\ST}=\ST(\Gamma_0)\cap\ZZ[1]$ and
$$\ST_*(\Gamma_0)=\ST(\Gamma_0)/Z_0^{\ST}\subset\Aut^\circ\D_{fd}(\Gamma_0)/\ZZ[1].$$
Note that $\ST_*(\Gamma_0)$ also acts on $\Sph(\Gamma_0)/[1]$.
By \cite[Theorem~4.4]{BQ}, $Z_0^{\ST}=1$ unless $\surf$ is a polygon,
in which case, $Z_0^{\ST}=\ZZ[n+3]$.
\end{remark}

Recall that the initial triangulation consists of closed arcs $s_i$,
whose braid twists $b_i=\Bt{s_i}$ generate $\BT(\TT_0)=\BT(\surfo)$ by Lemma~\ref{lem:gg}.
Moreover, the canonical heart $\h_0$ in $\D_{fd}(\Gamma_0)$ has simples
$S_i$ satisfying $S_i[\ZZ]=\widetilde{X}(s_i)$,
whose spherical twists $\phi_i=\phi_{S_i}$ generate $\BT(\surfo)$.

\begin{proposition}\label{pp:iota}
There is a canonical group homomorphism
\begin{gather}\label{eq:iota}
    \iota\colon\BT(\TT_0)\to\ST_*(\Gamma_0),
\end{gather}
sending the generator $b_i$ to the generator $\phi_{i}$.
\end{proposition}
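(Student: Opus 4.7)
The plan is to apply the universal property of free groups. Let $F_n$ denote the free group on symbols $\{e_1,\ldots,e_n\}$ and consider the two maps $p\colon F_n\to\BT(\TT_0)$, $e_i\mapsto b_i$, and $q\colon F_n\to\ST_*(\Gamma_0)$, $e_i\mapsto\phi_i$. The map $p$ is surjective by Definition~\ref{def:BT} together with Lemma~\ref{lem:gg}, so producing a canonical homomorphism $\iota$ with $\iota\circ p=q$ amounts to establishing the inclusion $\ker p\subseteq\ker q$: for every word $w\in F_n$ with $p(w)=1$ in $\BT(\TT_0)$, one must show that $\hat w:=q(w)$ is trivial in $\ST_*(\Gamma_0)$.

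Fix such a $w$ and any $\eta\in\cA(\surfo)$. Iterating Lemma~\ref{lem:actions} along the letters of $w$ yields the equivariance
\begin{gather*}
\hat w\bigl(\widetilde{X}(\eta)\bigr)=\widetilde{X}\bigl(p(w)(\eta)\bigr)=\widetilde{X}(\eta).
\end{gather*}
By Proposition~\ref{pp:oa} every simple closed arc is of the form $b(s_i)$ with $b\in\BT(\surfo)$ and $s_i\in\TT_0^*$, so combining this with \eqref{eq:sph=st} and Lemma~\ref{lem:actions} shows that $\widetilde{X}$ surjects onto $\Sph(\Gamma_0)/[1]$. Consequently $\hat w$ stabilises every shift orbit in $\Sph(\Gamma_0)/[1]$: for each $X\in\Sph(\Gamma_0)$ there is an integer $k_X$ with $\hat w(X)=X[k_X]$.

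The crux is to collapse the $k_X$ to a single integer. Since $\hat w$ is an auto-equivalence, for any pair of simples $S_i,S_j$ one has
\begin{gather*}
\dim\Hom^p(S_i,S_j)=\dim\Hom^{p+k_j-k_i}(S_i,S_j)\quad\text{for all }p\in\ZZ.
\end{gather*}
When $i\neq j$ are adjacent in $Q_{\TT_0}$, the graded space $\Hom^\bullet(S_i,S_j)$ is finite-dimensional, nonzero, and supported in degrees contained in $\{1,2\}$ (by the description of $\h_0$ together with 3-Calabi--Yau duality); a nonzero integer-valued function on $\ZZ$ of finite support cannot be periodic with nonzero period, forcing $k_j=k_i$. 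Since $\surf$ is connected, so is $Q_{\TT_0}$, yielding a common value $k_1=\cdots=k_n=:k$. The same finite-support periodicity argument applied to $\dim\Hom^p(S_i,X)$ for arbitrary nonzero $X\in\Sph(\Gamma_0)$ then upgrades this to $k_X=k$ uniformly. Hence $\hat w$ and $[k]$ act identically on $\Sph(\Gamma_0)$, so by Remark~\ref{def:iso} they coincide in $\Aut^\circ\D_{fd}(\Gamma_0)$; thus $\hat w\in\ST(\Gamma_0)\cap\ZZ[1]=Z_0^{\ST}$ and $q(w)=1$ in $\ST_*(\Gamma_0)$, as required. The principal obstacle is precisely this rigidity step---upgrading \emph{shift-orbit preservation} to \emph{uniform shift}; by contrast, the equivariance of $\widetilde{X}$ under the twist actions and the surjectivity of $\widetilde{X}$ onto $\Sph(\Gamma_0)/[1]$ are essentially formal consequences of Lemma~\ref{lem:actions} and Proposition~\ref{pp:oa}.
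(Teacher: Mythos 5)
Your proof is correct and follows essentially the same route as the paper: reduce to showing that any word in the $b_i$ representing the identity mapping class is sent to a shift functor, by applying the equivariance of $\widetilde{X}$ from Lemma~\ref{lem:actions} to the closed arcs it fixes, and then conclude triviality in $\ST_*(\Gamma_0)$ via $Z_0^{\ST}$. The only difference is expository: where the paper tersely says ``since $\phi$ is an equivalence, all $t_i$ must be the same,'' you justify this via the finite support of the graded $\Hom$ between adjacent simples and then upgrade the uniform shift from the simples to all of $\Sph(\Gamma_0)$ --- a filling-in of detail rather than a genuinely different argument.
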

\begin{proof}
Consider first the case when $\surf$ is not a polygon.
We only need to prove that, if
\begin{gather}\label{eq:b}
    b=b_{i_1}^{\varepsilon_1}\circ\cdots\circ b_{i_k}^{\varepsilon_k}
\end{gather}
equals $1$ in $\MCG(\surfo)$, for some $i_j\in\{1,\ldots,n\},\varepsilon_j\in\{\pm1\},1\leq j\leq k$
and $k\in\NN$,
then
\begin{gather}\label{eq:b=}
    \phi=\phi_{i_1}^{\varepsilon_1}\circ\cdots\circ \phi_{i_l}^{\varepsilon_l}
\end{gather}
equals $1$ in $\Aut^\circ\D_{fd}(\Gamma_0)$.

First, $b=1$ implies $b(s_i)=s_i$ for any $1\leq i\leq n$.
By (repeatedly using) Lemma~\ref{lem:actions}, we have
\[
    \widetilde{X}\left( b(s_i) \right)=
    \phi \left( \widetilde{X}(s_i) \right).
\]
Thus, $S_i[\ZZ]=\widetilde{X}\left( s_i \right)=\phi \left( S_i[\ZZ] \right)$,
i.e. $\phi(S_i)=S_i[t_i]$ for some integer $t_i$.
Since $\phi$ is an equivalence, we deduce that all $t_i$ must be the same.
Therefore $\phi=[t]$ for some integer $t$.
However, we have $\phi\in Z_0^{\ST}=1$ in this case, which implies $t=0$ and $\phi=1$
in $\Aut^\circ\D_{fd}(\Gamma_0)$, as required.

In the case when $\surfo$ is a polygon,
$b=1$ still implies $\phi=[t]$ for some $t\in\ZZ$ and
thus the proposition holds too.
\end{proof}

A consequence of the existence of $\iota$ is that
the braid twist group actions $\BT(\surfo)$ on $\cA(\surfo)$
are compatible with the spherical twist group actions $\ST_*(\Gamma_0)$ on $\Sph(\Gamma_0)/[1]$,
under the map $\widetilde{X}$ in \eqref{eq:X}.
More precisely, we have the commutative diagram below,
where the commutativity is in the sense of \eqref{eq:actions+} in the following corollary.
\begin{gather}
\begin{tikzpicture}[xscale=.6,yscale=.6]
\draw(180:3)node(o){$\cA(\surfo)$}(-3,2.2)node(b){\small{$\BT(\surfo)$}}
(0,2.5)node{$\iota$}(0,.5)node{$\widetilde{X}$};
\draw(0:3)node(a){$\Sph(\Gamma_0)/[1]$}(3,2.2)node(s){\small{$\ST_*(\Gamma_0)$}};
\draw[->,>=stealth](o)to(a);\draw[->,>=stealth](b)to(s);
\draw[->,>=stealth](-3.2,.6).. controls +(135:2) and +(45:2) ..(-3+.2,.6);
\draw[->,>=stealth](3-.2,.6).. controls +(135:2) and +(45:2) ..(3+.2,.6);
\end{tikzpicture}
\label{eq:acts}
\end{gather}

\begin{corollary}\label{cor:actions}
For any $b\in\BT(\surfo)$ and $\eta\in\cA(\surfo)$, we have
\begin{gather}
\label{eq:iota2}
    \iota(\operatorname{B}^\varepsilon_{\eta})=\phi_{\widetilde{X}(\eta)}^\varepsilon,
    \quad\varepsilon\in\{\pm1\}\\
\label{eq:actions+}
    \widetilde{X}\left( b(\eta) \right)
    =\iota(b) \left( \widetilde{X}(\eta) \right).
\end{gather}
\end{corollary}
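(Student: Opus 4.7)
The plan is to prove \eqref{eq:actions+} first by induction on word length and then derive \eqref{eq:iota2} as a consequence via the two conjugation formulas \eqref{eq:formulaB} and \eqref{eq:212}.

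First I would extend Lemma~\ref{lem:actions} from the generators $b_i = \Bt{s_i}$ to arbitrary $b\in\BT(\surfo) = \BT(\TT_0)$ by induction on the length $\ell(b)$ of $b$ as a word in the $b_i^{\pm1}$ (available since $\BT(\surfo)$ is generated by these by Lemma~\ref{lem:gg}). The base case $\ell(b)=1$ is exactly Lemma~\ref{lem:actions} together with $\iota(b_i^{\pm1}) = \phi_i^{\pm1} = \phi_{\widetilde{X}(s_i)}^{\pm1}$ coming from Proposition~\ref{pp:iota}. For the inductive step, write $b = b_i^{\varepsilon}\circ b'$ with $\ell(b') < \ell(b)$ and compute
\begin{align*}
    \widetilde{X}\left(b(\eta)\right)
    &= \widetilde{X}\left(b_i^{\varepsilon}(b'(\eta))\right)
     = \phi_i^{\varepsilon}\left(\widetilde{X}(b'(\eta))\right) \\
    &= \phi_i^{\varepsilon}\circ\iota(b')\left(\widetilde{X}(\eta)\right)
     = \iota(b_i^{\varepsilon})\circ\iota(b')\left(\widetilde{X}(\eta)\right)
     = \iota(b)\left(\widetilde{X}(\eta)\right),
\end{align*}
where the second equality uses Lemma~\ref{lem:actions}, the third uses the inductive hypothesis, and the fourth uses that $\iota$ is a group homomorphism with $\iota(b_i)=\phi_i$. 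This establishes \eqref{eq:actions+}.

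Next I would deduce \eqref{eq:iota2}. By Proposition~\ref{pp:oa}, any $\eta\in\cA(\surfo)$ can be written as $\eta = b(s_i)$ for some $b\in\BT(\surfo)$ and some $s_i\in\TT_0^*$. Applying the braid-twist conjugation formula \eqref{eq:formulaB} gives $\Bt{\eta} = b\circ\Bt{s_i}\circ b^{-1}$, so
\[
    \iota(\Bt{\eta}) \;=\; \iota(b)\circ\phi_{S_i}\circ\iota(b)^{-1}
    \;=\; \phi_{\iota(b)(S_i)}
\]
by the spherical-twist conjugation formula \eqref{eq:212}. Now by \eqref{eq:actions+} just proved,
\[
    \iota(b)\left(\widetilde{X}(s_i)\right) \;=\; \widetilde{X}(b(s_i)) \;=\; \widetilde{X}(\eta),
\]
and since the spherical twist $\phi_{(-)}$ is well-defined on shift orbits (first bullet point after Remark~\ref{def:iso}), we conclude $\iota(\Bt{\eta}) = \phi_{\widetilde{X}(\eta)}$. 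The case $\varepsilon=-1$ follows by taking inverses of both sides.

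There is essentially no hard step: all the genuinely topological content has already been packaged into Proposition~\ref{pp:oa} (every closed arc is reachable from $\TT_0^*$), Lemma~\ref{lem:gg} (generation of $\BT(\surfo)$ by the $b_i$), and Lemma~\ref{lem:actions} (compatibility of $\widetilde{X}$ with the generating braid twists); all the categorical content lives in \eqref{eq:212} and Proposition~\ref{pp:iota}. The only mild subtlety to keep in mind throughout is that both sides of \eqref{eq:actions+} live in $\Sph(\Gamma_0)/[1]$ rather than in $\D_{fd}(\Gamma_0)$ itself, so the well-definedness of $\phi_X$ on shift orbits must be invoked at the point where we pass from $\iota(b)(S_i[\ZZ])$ to a well-defined twist $\phi_{\iota(b)(S_i)}$.
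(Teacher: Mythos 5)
Your proof is correct and uses exactly the same ingredients as the paper: Proposition~\ref{pp:oa}, repeated application of Lemma~\ref{lem:actions} (your word-length induction is just a cleaner packaging of the paper's ``repeatedly using \eqref{eq:actions}''), and the conjugation formulas \eqref{eq:formulaB} and \eqref{eq:212}. The only difference is cosmetic: the paper establishes \eqref{eq:iota2} first and notes \eqref{eq:actions+} follows by a similar calculation, whereas you prove \eqref{eq:actions+} first and deduce \eqref{eq:iota2} from it.
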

\begin{proof}
Again, we will only deal with the case when $\varepsilon=1$.
By Proposition~\ref{pp:oa}, $\eta=b(s_j)$ for some $s_j\in\TT^*$ and $b\in\BT(\surfo)$
with the form \eqref{eq:b}.
Let $\phi$ be as in \eqref{eq:b=} and by (repeatedly using) \eqref{eq:actions},
we have $$\widetilde{X}(\eta)=\widetilde{X}(b(s_j))=\phi(\widetilde{X}(s_j))=\phi(S_j).$$
Then using formulae \eqref{eq:formulaB}, \eqref{eq:212} and the equality above
we have
\[\begin{array}{rll}
    \iota(\Bt{\eta})&=&\iota(\Bt{b(s)})\\&=&\iota\left(
    b_{i_1}^{\varepsilon_1}\circ\cdots\circ b_{i_k}^{\varepsilon_k}
    \circ \Bt{s_j} \circ
    b_{i_1}^{-\varepsilon_1}\circ\cdots\circ b_{i_k}^{-\varepsilon_k}
    \right)\\
    &=&
    \iota(b_{i_1}^{\varepsilon_1})\circ\cdots\circ \iota(b_{i_k}^{\varepsilon_k})
    \circ \iota(b_j) \circ
    \iota(b_{i_1}^{-\varepsilon_1})\circ\cdots\circ \iota(b_{i_k}^{-\varepsilon_k})\\
    &=&
    \phi_{i_1}^{\varepsilon_1}\circ\cdots\circ \phi_{i_k}^{\varepsilon_k}
    \circ \phi_j \circ
    \phi_{i_1}^{-\varepsilon_1}\circ\cdots\circ \phi_{i_k}^{-\varepsilon_k}    \\
    &=&\phi \circ \phi_j \circ \phi^{-1}\\
    &=&\phi_{\phi(S_j)}
    =\phi_{\widetilde{X}(\eta)},
\end{array}\]
i.e. \eqref{eq:iota2}.
A similar calculation 
gives \eqref{eq:actions+}, as the generalization of \eqref{eq:actions}.
\end{proof}

When specifying $b=\Bt{s}^\varepsilon$ in \eqref{eq:actions+} and using \eqref{eq:iota2},
we see that \eqref{eq:actions} holds for any $s,\eta\in\cA(\surfo)$.

\begin{corollary}
\eqref{eq:actions} holds for any $s,\eta\in\cA(\surfo)$.
\end{corollary}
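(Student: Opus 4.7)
The plan is to observe that the statement is now an essentially immediate consequence of the two identities established in Corollary~\ref{cor:actions}, namely \eqref{eq:iota2} and \eqref{eq:actions+}. The original Lemma~\ref{lem:actions} restricted $s$ to lie in the initial dual triangulation $\TT_0^*$, but once we have a well-defined group homomorphism $\iota\colon\BT(\surfo)\to\ST_*(\Gamma_0)$ intertwining the two actions, this restriction disappears.

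Concretely, I would proceed as follows. Fix arbitrary $s,\eta\in\cA(\surfo)$ and $\varepsilon\in\{\pm1\}$. Specialize \eqref{eq:actions+} to the element $b=\operatorname{B}_s^\varepsilon\in\BT(\surfo)$; this is a legitimate element of the braid twist group by Definition~\ref{def:bt}. We obtain
\begin{equation*}
    \widetilde{X}\!\left(\operatorname{B}_s^\varepsilon(\eta)\right)
    = \iota\!\left(\operatorname{B}_s^\varepsilon\right)\!\left(\widetilde{X}(\eta)\right).
\end{equation*}
Now apply \eqref{eq:iota2} to rewrite the right-hand side, giving
\begin{equation*}
    \iota\!\left(\operatorname{B}_s^\varepsilon\right) = \phi_{\widetilde{X}(s)}^{\varepsilon}.
\end{equation*}
Combining these two equalities yields
\begin{equation*}
    \phi_{\widetilde{X}(s)}^{\varepsilon}\!\left(\widetilde{X}(\eta)\right)
    = \widetilde{X}\!\left(\operatorname{B}_s^\varepsilon(\eta)\right),
\end{equation*}
which is precisely \eqref{eq:actions}.

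There is no real obstacle here: the genuine work was done in Lemma~\ref{lem:actions} (establishing \eqref{eq:actions} for the generating closed arcs $s\in\TT_0^*$), in Proposition~\ref{pp:iota} (extracting the well-defined homomorphism $\iota$ from those generating relations), and in Corollary~\ref{cor:actions} (bootstrapping from the generators to arbitrary $b\in\BT(\surfo)$ via the conjugation formulae \eqref{eq:formulaB} and \eqref{eq:212}). The present corollary is the clean restatement of that last bootstrap, using the fact that every closed arc $s\in\cA(\surfo)$ is reachable from $\TT_0^*$ under $\BT(\surfo)$ by Proposition~\ref{pp:oa}, so that $\operatorname{B}_s$ is automatically an element of $\BT(\surfo)$ to which \eqref{eq:actions+} may be applied. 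Thus the proof is a one-line specialization and no further case analysis is required.
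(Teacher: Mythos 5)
Your argument is correct and is precisely the paper's own proof: the text derives the corollary by specializing $b=\Bt{s}^{\varepsilon}$ in \eqref{eq:actions+} and then rewriting $\iota(\Bt{s}^{\varepsilon})$ via \eqref{eq:iota2}, exactly as you do. No gaps; the additional remarks about where the real work was done (Lemma~\ref{lem:actions}, Proposition~\ref{pp:iota}, Corollary~\ref{cor:actions}) are accurate but not needed.
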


Now, we are ready to prove the main theorem of this paper.
\subsection{The main result}
We start to show that $\widetilde{X}$ is bijective.

\begin{theorem}\label{thm:bijection}
The map $\widetilde{X}$ in \eqref{eq:X} induces a bijection
\[
    \widetilde{X}\colon\cA(\surfo)\xrightarrow{\text{1-1}}\Sph(\Gamma_0)/[1].
\]
\end{theorem}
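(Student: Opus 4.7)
The plan is to derive the bijectivity of $\widetilde{X}$ essentially formally from the intertwining diagram \eqref{eq:acts}, reducing everything to the trivial case $\eta = s_j \in \TT_0^*$. The four ingredients I intend to combine are: (i) the transitivity statement $\cA(\surfo) = \BT(\surfo)\cdot\TT_0^*$ of Proposition~\ref{pp:oa}; (ii) the identity $\Sph(\Gamma_0) = \ST(\Gamma_0)\cdot\Sim\h_0$ from \eqref{eq:sph=st}; (iii) the surjective group homomorphism $\iota\colon\BT(\surfo)\to\ST_*(\Gamma_0)$ of Proposition~\ref{pp:iota} (surjectivity is automatic, since by construction $\iota$ sends the generators $b_i = \Bt{s_i}$ to the generators $\phi_i = \phi_{S_i}$ of $\ST_*(\Gamma_0)$); and (iv) the intertwining identity $\widetilde{X}(b(\eta)) = \iota(b)(\widetilde{X}(\eta))$ from Corollary~\ref{cor:actions}.

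For surjectivity, I would take any $X\in\Sph(\Gamma_0)$ and, using \eqref{eq:sph=st}, write $X = \phi(S_j)$ for some $\phi\in\ST(\Gamma_0)$ and simple $S_j$. Since $\iota$ is surjective onto $\ST_*(\Gamma_0)$ and we are working modulo shifts, there exists $b\in\BT(\surfo)$ with $\iota(b) = \phi$ in $\ST_*(\Gamma_0)$. Applying \eqref{eq:actions+} and the basepoint identity $\widetilde{X}(s_j) = S_j[\ZZ]$ (Example~\ref{ex:0}), I obtain
\[
    \widetilde{X}\bigl(b(s_j)\bigr) \;=\; \iota(b)\bigl(\widetilde{X}(s_j)\bigr) \;=\; \phi(S_j)[\ZZ] \;=\; X[\ZZ],
\]
so $X[\ZZ]$ lies in the image of $\widetilde{X}$.

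For injectivity, suppose $\widetilde{X}(\eta_1) = \widetilde{X}(\eta_2)$ as shift orbits. By Proposition~\ref{pp:oa} write $\eta_1 = b(s_j)$ for some $b\in\BT(\surfo)$ and some $s_j\in\TT_0^*$. Applying $\iota(b^{-1})$ to the equal shift orbits and invoking \eqref{eq:actions+} twice yields
\[
    \widetilde{X}\bigl(b^{-1}(\eta_2)\bigr) \;=\; \iota(b^{-1})\bigl(\widetilde{X}(\eta_2)\bigr) \;=\; \iota(b^{-1})\bigl(\widetilde{X}(\eta_1)\bigr) \;=\; \widetilde{X}\bigl(b^{-1}(\eta_1)\bigr) \;=\; \widetilde{X}(s_j) \;=\; S_j[\ZZ].
\]
Corollary~\ref{lem:inj} then forces $b^{-1}(\eta_2) = s_j$, hence $\eta_2 = b(s_j) = \eta_1$.

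All the genuinely hard work has been carried out upstream: the homological computation underlying Proposition~\ref{pp:key}, the existence and well-definedness of the homomorphism $\iota$ in Proposition~\ref{pp:iota}, and the rigidity statement of Corollary~\ref{lem:inj} (which itself rests on the intersection-number formula of Lemma~\ref{lem:Hom}). The only mild subtlety in the present proof is the shift bookkeeping: one must treat $\widetilde{X}$ as a map of shift orbits and $\iota$ as landing in $\ST_*(\Gamma_0) = \ST(\Gamma_0)/Z_0^{\ST}$, but because both sides of \eqref{eq:actions+} are already formulated modulo $[1]$, no new work is required. I therefore expect no genuine obstacle in the proof of the theorem itself beyond a careful invocation of the four ingredients above.
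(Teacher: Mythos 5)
Your proof is correct, and its injectivity half coincides with the paper's word for word: write $\eta_1=b(s_j)$ via Proposition~\ref{pp:oa}, transport by $\iota(b^{-1})$ using \eqref{eq:actions+}, and conclude with Corollary~\ref{lem:inj}. Where you genuinely differ is the surjectivity half. The paper does not invoke surjectivity of $\iota$ there; instead it runs one more induction on $\Int(\TT_0,\eta)$, decomposing $\eta$ by Lemma~\ref{lem:dcp} into $\alpha,\beta$ with $\Int(\alpha,\beta)=\tfrac{1}{2}$ and using Corollary~\ref{cor:int.ts} together with \eqref{eq:212} and \eqref{eq:sph=st} to get $X_\eta=\phi_{X_\alpha}(X_\beta)\in\Sph(\Gamma_0)$ --- that is, the paragraph labelled ``surjectivity'' in the paper actually establishes the inclusion $\widetilde{X}(\cA(\surfo))\subseteq\Sph(\Gamma_0)/[1]$ (essentially re-deriving $1^\circ$ of Proposition~\ref{pp:key}), with the reverse inclusion left to exactly the orbit bookkeeping you spell out. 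Your route --- $\Sph(\Gamma_0)=\ST(\Gamma_0)\cdot\Sim\h_0$, surjectivity of $\iota$ onto $\ST_*(\Gamma_0)$, and \eqref{eq:actions+} applied to $b(s_j)$ --- proves the inclusion $\Sph(\Gamma_0)/[1]\subseteq\im\widetilde{X}$ directly and more formally; to fully justify the map landing in $\Sph(\Gamma_0)/[1]$ you should either quote $1^\circ$ of Proposition~\ref{pp:key} explicitly or note that your own computation read with Proposition~\ref{pp:oa} (every $\eta$ is $b(s_j)$, so $\widetilde{X}(\eta)=\iota(b)(S_j[\ZZ])$) gives that containment for free --- a one-line addition, not a gap. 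There is no circularity in your use of Propositions~\ref{pp:oa} and \ref{pp:iota} and Corollary~\ref{cor:actions}, since all are proved upstream of the theorem; in the end both proofs rest on the same machinery (the decomposition Lemma~\ref{lem:dcp} and the twist compatibility), yours packaging it group-theoretically via the two orbit descriptions, the paper's rerunning the geometric induction once more.
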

\begin{proof}
First we prove the injectivity.
Suppose $\widetilde{X}(\eta)=\widetilde{X}(\eta')$ for $\eta,\eta'\in\cA(\surfo)$.
Let $\eta=b(s_i)$ for some $b\in\BT(\surfo)$ and initial closed arc $s_i\in\T_0^*$.
Then by \eqref{eq:actions+} we have
\[
    S_i[\ZZ]=\widetilde{X}(s_i)=\widetilde{X}(b^{-1}(\eta))=
    \iota(b)^{-1}\left(\widetilde{X}(\eta)\right)=
    \iota(b)^{-1}\left(\widetilde{X}(\eta')\right)=
    \widetilde{X}(b^{-1}(\eta')).
\]
By Corollary~\ref{lem:inj}, $s_i=b^{-1}(\eta')$ or $\eta=\eta'$ as required.

Second, we prove the surjectivity.
Let $\eta$ be a closed arc in $\cA(\surfo)$ and
$\widetilde{X}(\eta)=X_\eta[\ZZ]$ for some representative $X_\eta$.
We only need to show that $X_\eta$ is in $\Sph(\Gamma_0)$.
Use induction on $I=\Int(\TT_0,\eta)$.
If $I=1$, then $\eta$ is some $s_i\in\TT_0$ and $X_\eta=S_i[\delta]$
for some integer $\delta$, which is in $\Sph(\Gamma_0)$.
Now suppose that the claim is true for $I\leq r$ for some $r\geq1$
and consider the case when $I=r+1$.
Apply Lemma~\ref{lem:dcp}, we find $\alpha$ and $\beta$ with
$\Int(\alpha,\beta)=\frac{1}{2}$ and \eqref{eq:tt}.
By Corollary~\ref{cor:int.ts}, we have representatives $X_\alpha$
and $X_\beta$ with \eqref{eq:stwist}.
By the inductive assumption, we know that $X_\alpha$ and $X_\beta$
are in $\Sph(\Gamma_0)$.
On the other hand, we have
$\phi_{X_{\alpha}}\in\ST(\Gamma_0)$ by \eqref{eq:212}
and the theorem follows from \eqref{eq:sph=st}.
\end{proof}

We proceed to show that the bijectivity above implies isomorphism
between twisted groups.

\begin{theorem}\label{thm:main}
Let $\surf$ be an unpunctured marked surface and $\TT_0$ a triangulation of $\surfo$
such that the corresponding FST'quiver has no double arrows.
Then there is a canonical isomorphism
\begin{gather}\label{eq:main}
    \iota\colon\BT(\TT_0)\to\ST(\Gamma_0),
\end{gather}
sending the generator $b_i$ to the generator $\phi_{i}$, where $\Gamma_0$
is the Ginzburg dg algebra associated to $\TT_0$.
\end{theorem}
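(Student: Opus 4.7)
The plan is to assemble Theorem~\ref{thm:main} from the three pillars already established in \S\ref{sec:main}: Proposition~\ref{pp:iota} (existence of $\iota$ into $\ST_*(\Gamma_0)$), Theorem~\ref{thm:bijection} (bijectivity of $\widetilde{X}$), and Proposition~\ref{pp:trivial} (triviality of $Z_0^{\BT}$). The three things to verify are that $\iota$ lifts to a well-defined map into $\ST(\Gamma_0)$, that it is surjective, and that it is injective.

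First, for the lifting: Proposition~\ref{pp:iota} only produces $\iota\colon\BT(\TT_0)\to\ST_*(\Gamma_0)=\ST(\Gamma_0)/Z_0^{\ST}$. Under the standing hypothesis (excluding the two special cases of Remark~\ref{rem:sp}), provided $\surf$ is not a polygon, the remark following \eqref{eq:iota} gives $Z_0^{\ST}=1$, so the map already lands in $\ST(\Gamma_0)$ with $\iota(b_i)=\phi_i$. In the polygon case one must match $Z_0^{\BT}=\langle\Dehn{\partial\surfo}\rangle$ with $Z_0^{\ST}=\ZZ[n+3]$ and check the analogous statement; this is the content of the discussion in \S\ref{sec:Kro}.

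Next, surjectivity follows essentially formally. Recall from the third bullet in \S\ref{sec:DC} that $\ST(\Gamma_0)$ is generated by $\{\phi_X\mid X\in\Sph(\Gamma_0)\}$. By Theorem~\ref{thm:bijection}, every such $X$ is of the form $\widetilde{X}(\eta)$ (up to shift) for some $\eta\in\cA(\surfo)$, and by \eqref{eq:iota2} of Corollary~\ref{cor:actions} we have $\iota(\Bt{\eta})=\phi_{\widetilde{X}(\eta)}=\phi_X$. Hence every generator of $\ST(\Gamma_0)$ lies in the image of $\iota$.

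For injectivity, suppose $\iota(b)=\id$ for some $b\in\BT(\TT_0)$. Then \eqref{eq:actions+} in Corollary~\ref{cor:actions} yields
\[
    \widetilde{X}(b(\eta))=\iota(b)(\widetilde{X}(\eta))=\widetilde{X}(\eta)
\]
for every $\eta\in\cA(\surfo)$, so by the injectivity part of Theorem~\ref{thm:bijection} we conclude $b(\eta)=\eta$ for all $\eta\in\cA(\surfo)$. Using \eqref{eq:formulaB} exactly as in the proof of Proposition~\ref{pp:trivial}, $b$ commutes with every generator $\Bt{\eta}$ of $\BT(\surfo)=\BT(\TT_0)$ (Lemma~\ref{lem:gg}), so $b\in Z_0^{\BT}$. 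Proposition~\ref{pp:trivial} then forces $b=1$. The main obstacle is not any single step in this non-polygon/non-annulus case—each reduction is already handled by the preceding machinery—but rather the careful bookkeeping needed when $\surf$ is a polygon or an annulus, where both $Z_0^{\BT}$ and $Z_0^{\ST}$ are non-trivial and one must verify that $\iota$ identifies them; this is why those cases are postponed to \S\ref{sec:Kro}.
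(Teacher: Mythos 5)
Your treatment of the main (non-polygon) case is essentially the paper's own proof: existence of $\iota$ from Proposition~\ref{pp:iota} together with $Z_0^{\ST}=1$, surjectivity because generators go to generators, and injectivity by combining \eqref{eq:actions+} with Theorem~\ref{thm:bijection} to get $b(\eta)=\eta$ for all $\eta\in\cA(\surfo)$, then \eqref{eq:commutes} to place $b$ in $Z_0^{\BT}$, which is trivial. Up to that point there is nothing to object to.

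The gap is in how you dispatch the boundary cases. For the annulus, your assertion that ``both $Z_0^{\BT}$ and $Z_0^{\ST}$ are non-trivial'' is false: by the discussion preceding Proposition~\ref{pp:trivial} (citing \cite{KP}), $Z_0^{\BT}=1$ when $\surfo$ is an annulus, and $Z_0^{\ST}=1$ whenever $\surf$ is not a polygon. So the annulus needs no postponement at all; the only adjustment is that for injectivity you must quote that bullet rather than Proposition~\ref{pp:trivial}, whose hypotheses exclude annuli --- as written, your injectivity step silently fails to cover annuli with $p+q\geq 3$, and \S~\ref{sec:Kro} does not rescue you, since it only treats the two excluded cases of Remark~\ref{rem:sp} (the $(1,1)$-annulus and the once-marked torus), which are anyway outside the hypotheses of Theorem~\ref{thm:main} because every triangulation there has a double arrow. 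For the polygon, the centre-matching you propose ($\Dehn{\partial\surfo}\mapsto[n+3]$, identifying $Z_0^{\BT}$ with $Z_0^{\ST}$) is not carried out in \S~\ref{sec:Kro} or anywhere else you point to; the paper settles the polygon case by invoking the faithfulness results of \cite{KS} and \cite{ST} directly. So as it stands your argument leaves the polygon case unproved and mislocates where the missing work would be done; replacing the deferral by the citation of \cite{KS} and \cite{ST} (or actually verifying the centre identification) closes the gap and brings your proof in line with the paper's.
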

\begin{proof}
When $\surf$ is a polygon, this follows from \cite{KS} and \cite{ST}.
Now suppose $\surf$ is not a polygon.
We first prove the case for the initial triangulation $\TT_0$
(whose FST quiver has no double arrows).
Then $\ST_*(\Gamma_0)=\ST(\Gamma_0)$.
In this case, we have the surjective homomorphism $\iota$ in \eqref{eq:iota}
and only need to show that it is injective.

Let $b\in\BT(\surfo)$ with $\iota(b)=1$ in $\ST(\Gamma_0)$.
By \eqref{eq:actions+}, we have
\[
    \widetilde{X}\left( b(\eta) \right)
    =\iota(b) \left( \widetilde{X}(\eta) \right)= \widetilde{X}(\eta),
\]
which implies $b(\eta)=\eta$ by Theorem~\ref{thm:bijection},
for any closed arc $\eta$.
By \eqref{eq:commutes}, this implies $b\circ\Bt{\eta}=\Bt{\eta}\circ b$
and thus $b$ is the center $Z_0^{\BT}$ of $\BT(\surfo)$.
But $Z_0^{\BT}=1$ in this case.
So $b=1$ and $\iota$ is injective.
\end{proof}

\begin{remark}\label{rem:general}
We can generalize Theorem~\ref{thm:main} to any triangulations $\TT\in\EGp(\surfo)$, i.e. as Theorem~1.
This follows by a standard induction, on the number of flips from $\TT_0$ to $\TT$;
so we only need to prove the case when $\TT$ is a flip of $\TT_0$.

On one hand, $\TT^*$ and $\TT_0^*$ are related by a Whitehead move as in Figure~\ref{fig:WH}.
Thus, the standard generators of $\BT(\TT)$ are conjugates of standard generators of $\BT(\TT_0)$.
It is straightforward to write down the formula of the conjugates.
On the other hand, this is also true for $\ST(\Gamma_\TT)$ and $\ST(\Gamma_0)$.
Namely,
\begin{itemize}
\item by \cite{KY}, there is a (canonical) derived equivalence
$$\Psi\colon\D_{fd}(\Gamma_\TT)\cong\D_{fd}(\Gamma_0),$$
such that the canonical heart $\h_{\Gamma_\TT}$ becomes
a tilt $\h'$ (cf. \cite[Definition~3.7]{KQ}) of the canonical heart $\h_0$;
\item \cite[Proposition~5.4]{KQ} provides a formula for how simples change under tilting
(i.e. each simple in $\h'$ is a twist or a shift of some simple in $\h$);
\item then we deduce that under the induced isomorphism $\Psi_*\colon\ST(\Gamma_\TT)\cong\ST(\Gamma_0)$,
the standard generators of $\ST(\Gamma_\TT)$ become the conjugates of
the standard generators of $\ST(\Gamma_0)$.
\end{itemize}
By comparing the two formulae of the conjugates, we deduce that
\eqref{eq:main} implies \eqref{1}.

We will use the same trick again in \S~\ref{sec:Kro} to prove the special cases in Remark~\ref{rem:sp},
which completes the generalization from Theorem~\ref{thm:main} to Theorem~1.
\end{remark}

\section{Special cases}\label{sec:Kro}
In this section, we first deal with the two special cases in Remark~\ref{rem:sp}.
Then we discuss the affine $\widetilde{A}$ case in more detail.
\subsection{The Kronecker case}
We first discuss the special case I) in Remark~\ref{rem:sp}.
Note that in case I), all triangulations of $\surf$ or $\surfo$ look the same,
cf. Figure~\ref{fig:Kro}.
Choose any triangulation $\TT_0$ of $\surfo$ as the initial triangulation.
Keep all the notations as above.

The dynamic of proof here is the reverse compared with the previous cases:
we will show the relation between the twist groups first;
then the relations between closed arcs and spherical objects.

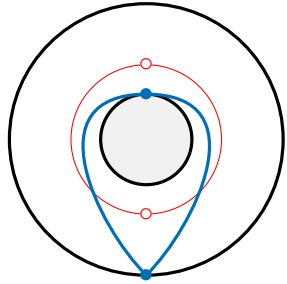
\begin{figure}[ht]\centering
\begin{tikzpicture}[scale=.3]
\draw[red](0,0)circle(3.3);
  \draw[very thick,NavyBlue](0,8)node{};
  \draw[very thick](0,0)circle(6);  \draw[very thick, fill=gray!11](0,0)circle(2);
  \draw[very thick,,NavyBlue] (0,-6).. controls +(135:3) and +(180:5) ..(0,2)
    .. controls +(0:5) and +(45:3) ..(0,-6);
  \draw[very thick,NavyBlue](0,-6)node{$\bullet$}(0,2)node{$\bullet$};
  \draw(0,3.3)node[white] {$\bullet$} node[red]{$\circ$};\draw(0,-3.3)node[white] {$\bullet$} node[red]{$\circ$};
\end{tikzpicture}
  \caption{The Kronecker case}
  \label{fig:Kro}
\end{figure}

First, we claim that \eqref{eq:main} also holds in this case.

\begin{proposition}\label{pp:main1}
Let $\surf$ be an annulus with two marked points and $\TT_0$ a triangulation of $\surfo$.
There is a canonical isomorphism
\begin{gather}\label{eq:main2}
    \iota\colon\BT(\TT_0)\to\ST(\Gamma_0),
\end{gather}
sending the generator $b_i$ to the generator $\phi_{i}$, where $\Gamma_0$
is the Ginzburg dg algebra
associated to $\TT_0$.
\end{proposition}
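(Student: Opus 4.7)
The plan is to follow the reversal of strategy outlined in the remark preceding the proposition: first establish the group-level isomorphism, then derive the arc--object correspondence. On the topological side, Proposition~\ref{pp:gg} gives $\BT(\TT_0)=\BT(\surfo)$, and by \cite{KP} this is the affine braid group $\Br(\widetilde{A}_n)$ with trivial center, presented by generators $b_1,b_2$ and the defining affine braid relations. On the categorical side, the quiver $Q_{\TT_0}$ has two vertices with double arrows in each direction, but the rigid potential $W_{\TT_0}$ is still a sum of $3$-cycles, so the Ext-algebra $\EE_0=\End^{\bullet}(S_1\oplus S_2)$ remains formal by the argument recalled in Section~\ref{sec:sm}, and its graded structure can be written out explicitly in analogy with Example~\ref{ex:2}.

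The next step I would take is to verify, by direct computation using the explicit structure of $\EE_0$ together with formula~\eqref{eq:212}, that $\phi_1=\phi_{S_1}$ and $\phi_2=\phi_{S_2}$ satisfy the defining relations of $\Br(\widetilde{A}_n)$. This yields a well-defined homomorphism $\iota\colon\BT(\TT_0)\to\ST(\Gamma_0)$ sending $b_i\mapsto\phi_i$, which is surjective by construction since $\phi_1,\phi_2$ generate $\ST(\Gamma_0)$.

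For injectivity, I would transport the initial correspondence $s_i\mapsto S_i[\ZZ]$ along the two group actions to produce a map $\widetilde{X}\colon\cA(\surfo)\to\Sph(\Gamma_0)/[1]$ satisfying the analogue of \eqref{eq:actions+} by construction. If $\iota(b)=1$ for some $b\in\BT(\TT_0)$, the argument of Theorem~\ref{thm:main} then applies verbatim: $b$ fixes every closed arc, hence lies in the center $Z_0^{\BT}$, which is trivial by \cite{KP}; so $b=1$ and $\iota$ is injective.

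The hard part will be the explicit verification of the braid relations at the categorical level: because Lemma~\ref{lem:ini} fails in this case — any two triangles of $\TT_0$ share both internal arcs — the clean string-model framework of Section~\ref{sec:5} cannot be invoked to reduce the relations to geometric identities about intersecting closed arcs. Instead, the relations must be checked by a direct calculation with the Ext-algebra in the presence of double arrows, exploiting the formality of $\EE_0$ and the explicit description of the morphisms between the two simples.
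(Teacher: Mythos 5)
There is a genuine gap, and it sits exactly where you locate the ``hard part'' --- but the difficulty is not the one you identify. In the Kronecker case $\BT(\TT_0)\cong\Br(\widetilde{A_{1,1}})$ is a \emph{free} group of rank two (as the paper notes in the proof of Theorem~\ref{thm:mainA}), so there are no braid relations to verify and the existence and surjectivity of $\iota$ are immediate; the entire content of the proposition is injectivity, i.e.\ that $\phi_1,\phi_2$ satisfy no unexpected relations and generate a free subgroup of $\Aut\D_{fd}(\Gamma_0)$. Your injectivity argument assumes you can ``transport'' $s_i\mapsto S_i[\ZZ]$ to an equivariant map $\widetilde{X}\colon\cA(\surfo)\to\Sph(\Gamma_0)/[1]$ ``by construction'' and then run the proof of Theorem~\ref{thm:main} verbatim. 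Neither step is available here. Defining $\widetilde{X}(b(s_i)):=\iota(b)(S_i)[\ZZ]$ requires a well-definedness check: whenever $c\in\BT(\surfo)$ stabilizes the arc $s_i$, the autoequivalence $\iota(c)$ must preserve $S_i$ up to shift, and this stabilizer compatibility is precisely the kind of statement the paper extracts from the intrinsic string model (Lemma~\ref{lem:actions}, Corollary~\ref{cor:int.ts}), not from formal transport. Moreover the verbatim appeal to Theorem~\ref{thm:main} needs the injectivity of $\widetilde{X}$ (Theorem~\ref{thm:bijection}), whose proof rests on Lemma~\ref{lem:Hom}, Corollary~\ref{lem:inj}, Lemma~\ref{lem:dcp} and Proposition~\ref{pp:key} --- all of which use the hypothesis that the initial triangulation has no double arrows, which fails here (and you acknowledge the string model is unavailable). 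So the proposal reduces the proposition to ``a direct calculation with the Ext-algebra'' of the Kronecker heart, but proving that two spherical twists in a $3$-CY category generate a free group is a substantive faithfulness statement, not a routine verification; no mechanism for it is supplied.

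For contrast, the paper sidesteps all computation in this case by an embedding trick: take the decorated annulus $\surfo'$ of type $\widetilde{A_{1,2}}$ with a triangulation $\TT'$ to which Theorem~1 already applies (via Remark~\ref{rem:general}), identify $\surfo$ with a subsurface $\mathbf{Y}_\Tri$ of $\surfo'$ whose inherited dual triangulation is $\{s_1',s_2'\}$, and identify $\D_{fd}(\Gamma_0)$ with the subcategory of $\D_{fd}(\Gamma_{\TT'})$ generated by the corresponding spherical objects $X_1',X_2'$; restricting the isomorphism $\BT(\TT')\cong\ST(\Gamma_{\TT'})$ to the subgroups $\langle\Bt{s_1'},\Bt{s_2'}\rangle$ and $\langle\phi_{X_1'},\phi_{X_2'}\rangle$ then yields \eqref{eq:main2}. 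If you want to keep a direct approach, you would need an independent proof of freeness of $\langle\phi_1,\phi_2\rangle$ (or an intrinsic construction of $X_\eta$ valid with double arrows), neither of which your outline provides.
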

\begin{proof}
Consider an annulus $\surfo'$ with triangulation $\TT_0'$ (cf. left picture in Figure~\ref{fig:A12}),
whose FST quiver is the affine quiver $Q'$ of type $\widetilde{A_{1,2}}$:
\[\xymatrix@R=.3pc{
    &3'\ar@{<-}[dr]\\ 2'\ar@{<-}[ur]\ar@{<-}[rr]&&1'
}\]
We can choose another triangulation $\TT'$, as shown in the right picture in Figure~\ref{fig:A12},
whose FST quiver is
\[\xymatrix@R=.5pc{
    &3\ar[dr]\\ 2\ar[ur]\ar@{<-}@<.3ex>[rr]\ar@{<-}@<-.3ex>[rr]&&1.
}\]
By Remark~\ref{rem:general}, we have \eqref{1} for $\TT'$.
On the other hand, we have the following two facts:
\begin{itemize}
\item the subcategory $\D_0$ of $\D_{fd}({\Gamma'})$ generated by $X'_{1}$ and $X'_{2}$
is equivalent to the 3-CY category for a Kronecker quiver, where $X'_i$ is the spherical object
corresponding to $s_i'$;
\item there is a subsurface $\mathbf{Y}_\Tri$ of $\surfo'$, with inherited triangulation from $\TT'$
(whose dual consists of $s_1'$ and $s_2'$),
that is isomorphic to any triangulation of an annulus with two marked points.
\end{itemize}
Therefore, by identifying $\D_{fd}({\Gamma_0})$ with $\D_0$
and $\surfo$ with $\mathbf{Y}_\Tri$, we have
$$\ST(\Gamma_0)\cong\<\phi_{X_1'},\phi_{X_2'}\>\cong\<\Bt{s_1'},\Bt{s_2'}\>\cong\BT(\TT_0),$$
which implies the proposition.

\end{proof}

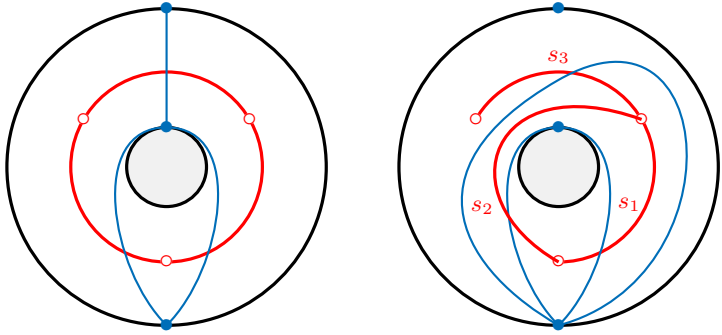
\begin{figure}[t]\centering
\begin{tikzpicture}[scale=.35]
\draw[very thick, red](0,0)circle(3.6);
  \draw[very thick,NavyBlue](0,8)node{};
  \draw[very thick](0,0)circle(6);  \draw[very thick,fill=gray!11](0,0)circle(1.5);
  \draw[NavyBlue,thick] (0,-6).. controls +(135:3) and +(180:3) ..(0,1.5)
    .. controls +(0:3) and +(45:3) ..(0,-6);
  \draw[NavyBlue,thick](0,-6)node{$\bullet$}(0,1.5)node{$\bullet$}to(0,6)node{$\bullet$};
    \foreach \j in {1,...,3}{
  \draw(-90+120*\j:3.6)node[white] {$\bullet$} node[red]{$\circ$};}

  \draw(90-120*1:3)node[red]{\text{\footnotesize{}}}
  (90-120*2:3)node[red]{\text{\footnotesize{}}}
  (90-120*3-10:3.5)node[above,red]{\text{\footnotesize{}}} ;
\end{tikzpicture}
\qquad
\begin{tikzpicture}[scale=.35]
\draw[very thick, red](-90:3.6)arc(-90:150:3.6);
  \draw[very thick,NavyBlue](0,8)node{};
  \draw[very thick](0,0)circle(6);  \draw[very thick,fill=gray!11](0,0)circle(1.5);
  \draw[NavyBlue,thick] (0,-6).. controls +(135:3) and +(180:3) ..(0,1.5)
    .. controls +(0:3) and +(45:3) ..(0,-6);
  \draw[very thick,NavyBlue](0,-6)node{$\bullet$}(0,1.5)node{$\bullet$}(0,6)node{$\bullet$};
  \draw[NavyBlue,thick] (0,-6).. controls +(15:7) and +(30:7) ..(0,3.3)
    .. controls +(0-150:7) and +(150:4) ..(0,-6);
    \foreach \j in {1,...,3}{
  \draw(-90+120*\j:3.6)node[white] {$\bullet$} node[red]{$\circ$};}

  \draw[very thick, red](-90:3.6).. controls +(150:5) and +(160:6) ..(30:3.6)
  (-28:3)node{\text{\footnotesize{$s_1$}}}(28+180:3.25)node{\text{\footnotesize{$s_2$}}}
  (90:3.5)node[above,red]{\text{\footnotesize{$s_3$}}};
\end{tikzpicture}
  \caption{$\widetilde{A_{1,2}}$ case: $\TT'$ on the left and $\TT_0'$ on the right}
  \label{fig:A12}
\end{figure}


\subsection{The one marked point torus case}
In this section, we give the analogue of Proposition~\ref{pp:main1}
for the special case II) in Remark~\ref{rem:sp}.
The proof is almost the same, by considering
a torus with one boundary component and two marked points on it for instance.
\begin{proposition}\label{pp:main2}
Let $\surf$ be a torus with one marked point and $\TT_0$ a triangulation of $\surfo$.
There is a canonical isomorphism
\begin{gather}\label{eq:main3}
    \iota\colon\BT(\TT_0)\to\ST(\Gamma_0),
\end{gather}
sending the generator $b_i$ to the generator $\phi_{i}$, where $\Gamma_0$
is the Ginzburg dg algebra
associated to $\TT_0$.
\end{proposition}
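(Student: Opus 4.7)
The plan is to imitate the embedding argument of Proposition~\ref{pp:main1}: since no triangulation of $\surfo$ avoids double arrows, we work inside a slightly larger decorated surface that does. As suggested in the proposition's remark, the candidate is the torus with one boundary component carrying \emph{two} marked points.

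Concretely, let $\surf'$ be such a torus, so $\surfo'$ has four decorating points and any triangulation consists of five arcs and four triangles. By Lemma~\ref{lem:ini}, $\surfo'$ admits a triangulation $\TT'$ whose quiver $Q_{\TT'}$ has no double arrows. Theorem~\ref{thm:main} then supplies a canonical isomorphism $\iota'\colon\BT(\TT')\xrightarrow{\cong}\ST(\Gamma_{\TT'})$ matching standard generators with the corresponding spherical twists.

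Next, I would exhibit a second triangulation $\TT_1'$ of $\surfo'$, reached from $\TT'$ by a finite sequence of flips, with the following feature: four of the five arcs of $(\TT_1')^*$ bound a subsurface $\mathbf{Y}\subset\surfo'$ obtained by cutting off a small disc neighborhood containing the extra marked point together with one decorating point; this $\mathbf{Y}$ is homeomorphic to $\surfo$, and the four selected closed arcs restrict to a dual triangulation of $\mathbf{Y}$ matching the prescribed $\TT_0^*$. This is the analogue of Figure~\ref{fig:A12} and reduces to an explicit picture: since any two triangulations of $\surfo$ are connected by flips, we can arrange $\TT_1'$ so that the inherited triangulation of $\mathbf{Y}$ really is $\TT_0$. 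By Remark~\ref{rem:general}, $\iota'$ transports through the flips to an isomorphism $\iota_1'\colon\BT(\TT_1')\xrightarrow{\cong}\ST(\Gamma_{\TT_1'})$.

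Finally, restrict both sides. Topologically, the braid twists along the four arcs of $\TT_0^*$ generate a subgroup of $\BT(\TT_1')$ canonically isomorphic to $\BT(\TT_0)$, since all defining braid/commutation relations among them are already visible inside $\mathbf{Y}\cong\surfo$ (intersection numbers and the isotopy classes of relevant general closed arcs are preserved by the inclusion). Categorically, the four spherical objects assigned to these arcs by the string model of \S\ref{sec:sm} generate a full triangulated subcategory of $\D_{fd}(\Gamma_{\TT_1'})$ equivalent to $\D_{fd}(\Gamma_0)$: the Ext-quiver restricted to the four relevant simples reproduces $\Q{\h_0}$, and the potential restricted to these four vertices is $W_{\TT_0}$, so formality (cf.~\eqref{eq:EE}) recovers the right $A_\infty$-structure. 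Restricting $\iota_1'$ then gives the desired isomorphism $\iota\colon\BT(\TT_0)\xrightarrow{\cong}\ST(\Gamma_0)$ of \eqref{eq:main3}.

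The main obstacle is the middle step: producing the explicit $\TT_1'$ and verifying that the four chosen dual arcs really bound a sub-copy of $\surfo$ with inherited triangulation $\TT_0$. Once that topological picture is drawn, both restrictions are forced essentially automatically, since $\BT(\surfo)=\BT(\TT)$ for any $\TT$ (Proposition~\ref{pp:gg}) and a spherical twist depends only on how its underlying simple sits in the Ext-quiver (formula~\eqref{eq:212}), so no genuinely new computation beyond Theorem~\ref{thm:main} is required.
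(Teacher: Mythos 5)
Your proposal is essentially the paper's own proof: the paper disposes of this case by exactly the embedding trick of Proposition~\ref{pp:main1}, passing to the torus with one boundary component and two marked points, invoking Lemma~\ref{lem:ini}, Theorem~\ref{thm:main} and Remark~\ref{rem:general} there, and then identifying $\surfo$ (with a triangulation) with a triangulated subsurface and $\D_{fd}(\Gamma_0)$ with the subcategory generated by the corresponding spherical objects, so that restricting the big isomorphism yields \eqref{eq:main3}. The only small adjustment: instead of appealing to flip-connectedness of $\EG(\surfo)$ (which the paper never establishes) to force the inherited triangulation to be the prescribed $\TT_0$, argue as in the Kronecker case that all triangulations of this $\surfo$ are related by homeomorphisms, so it suffices to treat the inherited one.
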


\subsection{Example: annulus case}
When $\surf$ is an annulus, Theorem~\ref{thm:main},
(together with Proposition~\ref{pp:main1}) can be stated as follows.

\begin{theorem}\label{thm:mainA}
Let $\surf$ be an annulus and $\TT$ be a triangulation of $\surfo$
with associated Ginzburg dg algebra $\Gamma_\TT$.
Suppose there are $p$ and $q$ marked points on the two boundary components of $\surf$,
respectively.
Then the spherical twist group $\ST(\Gamma_\TT)$ is (canonically) isomorphic
to the braid group $\Br(\widetilde{A_{p,q}})$ of affine $\widetilde{A_{p,q}}$.
\end{theorem}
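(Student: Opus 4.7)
The plan is to assemble Theorem~\ref{thm:mainA} by composing results already established in the paper: the canonical isomorphism $\BT(\TT)\cong\ST(\Gamma_\TT)$ between the topological and categorical twist groups, together with the known presentation of $\BT(\surfo)$ when $\surfo$ is an annulus. The only subtlety is that the hypothesis of Theorem~\ref{thm:main} (via Lemma~\ref{lem:ini}) excludes the Kronecker sub-case I) of Remark~\ref{rem:sp}, which for an annulus corresponds to $p=q=1$; that case is covered independently by Proposition~\ref{pp:main1}. I therefore split the argument along this dichotomy.

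\textbf{Generic case $p+q\geq 3$.} Here Lemma~\ref{lem:ini} applies, so there is an initial triangulation $\TT_0$ whose FST quiver $Q_{\TT_0}$ has no double arrows. Theorem~\ref{thm:main}, extended to an arbitrary $\TT\in\EGp(\surfo)$ by the bootstrap argument in Remark~\ref{rem:general}, produces a canonical group isomorphism
\[
    \iota\colon \BT(\TT)\xrightarrow{\;\sim\;}\ST(\Gamma_\TT),
\]
taking each braid twist $\Bt{s_i}$ along a dual closed arc $s_i\in\TT^*$ to the spherical twist $\phi_{X_{s_i}}$. By Proposition~\ref{pp:gg}, the left-hand side equals $\BT(\surfo)$. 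The cited result \cite{KP} then identifies $\BT(\surfo)$ for the annulus with the affine braid group $\Br(\widetilde{A_{p+q}})$ on $p+q$ generators. Since the braid (Artin) group depends only on the underlying undirected diagram and since the FST quiver $Q_\TT$ is an orientation of the $(p+q)$-cycle $\widetilde{A_{p,q}}$, one has $\Br(\widetilde{A_{p+q}})=\Br(\widetilde{A_{p,q}})$. Composing these identifications yields the desired canonical isomorphism $\ST(\Gamma_\TT)\cong\Br(\widetilde{A_{p,q}})$.

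\textbf{Kronecker case $p=q=1$.} Here $\surfo$ has two marked points in total and its FST quiver is the Kronecker quiver $\widetilde{A_{1,1}}$. Theorem~\ref{thm:main} does not apply, but Proposition~\ref{pp:main1} directly furnishes the canonical isomorphism $\BT(\TT)\cong\ST(\Gamma_\TT)$ with the same matching of generators. The rest of the argument is then identical: Proposition~\ref{pp:gg} gives $\BT(\TT)=\BT(\surfo)$, and \cite{KP} identifies this with $\Br(\widetilde{A_{2}})=\Br(\widetilde{A_{1,1}})$.

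\textbf{Main obstacle.} With the main theorem and the KP identification at hand, there is essentially no remaining obstruction; the deep work has already been performed in Theorem~\ref{thm:main}, Proposition~\ref{pp:main1}, and \cite{KP}. The only care needed is in verifying that the generators are matched canonically on both sides, i.e.\ that $\Bt{s_i}\mapsto \phi_{X_{s_i}}$ really intertwines the two standard Artin presentations of $\Br(\widetilde{A_{p,q}})$. This compatibility is immediate from Lemma~\ref{lem:actions} (which translates intersections $\Int(s_i,s_j)=\tfrac12$ into braid relations) and from the explicit formula \eqref{eq:iota2}, since adjacency in $Q_\TT$ of the vertices $i,j$ is precisely the condition $\Int(s_i,s_j)=\tfrac12$ on the dual closed arcs.
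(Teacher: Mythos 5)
Your proposal is correct and follows essentially the same route as the paper: split off the Kronecker case $p=q=1$, which is handled by Proposition~\ref{pp:main1} together with the fact that $\Br(\widetilde{A_{1,1}})$ is free of rank $2$, and for $p+q\geq 3$ combine Theorem~\ref{thm:main} (extended as in Remark~\ref{rem:general}) with Proposition~\ref{pp:gg} and the identification of $\BT(\surfo)$ with the affine braid group via \cite{KP}. Your only deviations are cosmetic — the explicit check that the generators match and the slight reliance on Proposition~\ref{pp:gg} in the Kronecker case, which the paper's own argument bypasses — so no gap arises.
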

\begin{proof}
The case $p=q=1$ is Proposition~\ref{pp:main1},
noticing that the braid group $\Br(\widetilde{A_{1,1}})$ is a rank $2$ free group.
The other case follows from Theorem~\ref{thm:main},
noticing that $\BT(\surfo)$
is (canonically) isomorphic to $\Br(\widetilde{A_{p,q}})$
by the geometric description of the affine braid group in \cite{KP}.
\end{proof}

\section{On the space of stability conditions}\label{sec:sss}
\subsection{Stability conditions}
First recall Bridgeland's notion of stability conditions.

\begin{definition}[cf. \cite{BS}]\label{def:stab}
A \emph{stability condition} $\sigma = (Z,\hua{P})$ on
a triangulated category $\hua{D}$ consists of
a group homomorphism (\emph{the central charge}) $Z:K(\hua{D}) \to \kong{C}$ and
full additive subcategories $\hua{P}(\varphi) \subset \hua{D}$
for each $\varphi \in \kong{R}$, satisfying the following axioms:
\begin{itemize}
\item if $0 \neq E \in \hua{P}(\varphi)$
then $Z(E) = m(E) \exp(\varphi  \pi \mathbf{i} )$ for some $m(E) \in \kong{R}_{>0}$;
\item $\hua{P}(\varphi+1)=\hua{P}(\varphi)[1]$, for all
$\varphi \in \kong{R}$;
\item if $\varphi_1>\varphi_2$ and $A_i \in \hua{P}(\varphi_i)$
then $\Hom_{\hua{D}}(A_1,A_2)=0$;
\item for each nonzero object $E \in \hua{D}$ there is a finite sequence of real numbers
$$\varphi_1 > \varphi_2 > ... > \varphi_m$$
and a collection of triangles (the Harder-Narashimhan filtration)
\begin{equation}\label{eq:filt}\xymatrix@C=0.8pc@R=1.4pc{
  0=E_0 \ar[rr] && E_1 \ar[dl] \ar[rr] &&   E_2 \ar[dl] \ar[rr] && ... \
  \ar[rr] && E_{m-1} \ar[rr] && E_m=E \ar[dl] \\
  & A_1 \ar@{-->}[ul]  && A_2 \ar@{-->}[ul] &&  && && A_m \ar@{-->}[ul]
}\end{equation}
with $A_j \in \hua{P}(\varphi_j)$ for all $j$.
\end{itemize}
Let $I$ be an interval in $\kong{R}$ and define $\hua{P}(I)$
to be the subcategory generated by $\{\hua{P}(\varphi)\mid\varphi\in I\}$.
The heart of a stability condition $\sigma = (Z,\hua{P})$ on $\hua{D}$ is $\hua{P}[0,1)$.
\end{definition}
An important result by Bridgeland is that
all stability conditions on a triangulated category $\D$
form a space $\Stab(\D)$ that has the structure of a complex manifold.
We are interested in the stability conditions on the 3-CY category $\D_{fd}(\Gamma)$
for a Ginzburg dg algebra $\Gamma$ arising from quivers with potential.
Note that for the stability conditions on $\D_{fd}(\Gamma)$
whose heart is the canonical heart $\zero$ form a half open half closed $n$-cell $U(\zero)$
in $\Stab\D_{fd}(\Gamma)$ (see \cite{Q2}).
Denote by $\Stap\D_{fd}(\Gamma)$ the connected component of $\Stab\D_{fd}(\Gamma)$ that contains
$U(\zero)$.

\subsection{Quadratic differentials}
Recall that $\surf$ is a marked surface with initial triangulation $\TT_0$, associated Ginzburg dg algebra $\Gamma_0$ and $\Aut^\circ\D_{fd}(\Gamma_0)$ is defined as in \eqref{eq:Aut0}.
Denote by $\Quad_\heartsuit(\surf)$ the moduli space of quadratic differentials on $\surf$,
in the sense of \cite[\S6]{BS}.
The main result there is as follows.

\begin{theorem}\cite[Theorem~1.2]{BS}
As complex manifolds,
$    \Stap\D_{fd}(\Gamma_0)/\Aut^\circ\cong\Quad_\heartsuit(\surf).$
\end{theorem}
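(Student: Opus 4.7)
The plan is to construct mutually inverse holomorphic maps between $\Stap\D_{fd}(\Gamma_0)/\Aut^\circ$ and $\underline{\Quad}(\surf)$ by gluing period charts indexed by the exchange graph $\EGp(\surfo)$ on both sides. The decorating set $\Tri$ should play the role of the simple zeros of the differential (equivalently, branch points of the canonical double cover), so that the closed arcs in $\cA(\surfo)$ correspond to saddle connections. First I would work on the cell $U(\h_0)$: a stability condition with heart $\h_0$ is determined by the central charges $Z(S_i)$ of the simples $S_i$ attached to the arcs $\gamma_i\in\TT_0$, and via the string model of \S \ref{sec:5} together with Theorem \ref{thm:bijection} the dual closed arcs $s_i\in\TT_0^*$ are realised as horizontal-strip edges of a meromorphic quadratic differential $\phi$ on $\surf$, with simple zeros at $\Tri$ and poles at $\M$, whose period integrals $\int_{s_i}\sqrt{\phi}$ recover $Z(S_i)$. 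This identifies $U(\h_0)$ with a period chart of $\underline{\Quad}(\surf)$.

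Next I would extend globally by propagating through $\EGp(\surfo)$. A simple tilt of $\h_0$ at $S_i$ corresponds, via simple-projective duality and Remark \ref{rem:general}, to a flip of $\TT_0$ at $\gamma_i$ and hence a Whitehead move of $\TT_0^*$; on the quadratic-differential side this is precisely the wall-crossing at which a horizontal saddle connection rotates, and the compatibility of the two wall-crossing rules is already encoded in Lemma \ref{lem:actions} and Corollary \ref{cor:int.ts}. Gluing the period charts along these walls produces a holomorphic map $K\colon\Stap\D_{fd}(\Gamma_0)\to\underline{\Quad}(\surf)$. I would then verify $\Aut^\circ$-equivariance: by Theorem \ref{thm:main}, the $\ST(\Gamma_0)$-part of $\Aut^\circ$ is the braid twist group $\BT(\surfo)$, which fixes the isotopy class of $\phi$; the residual mapping-class component is precisely $\MCG(\surf)$ acting on $\underline{\Quad}(\surf)$. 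Hence $K$ descends to a holomorphic map $\overline{K}\colon\Stap\D_{fd}(\Gamma_0)/\Aut^\circ\to\underline{\Quad}(\surf)$.

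To invert, I would attach to each $\phi\in\underline{\Quad}(\surf)$ its WKB (horizontal-strip) triangulation $\TT\in\EG(\surfo)$, upgrade the canonical heart $\h_{\Gamma_\TT}$ via the derived equivalence of Remark \ref{rem:general} to a stability condition whose simples have central charges given by the height integrals of the horizontal strips, and identify saddle connections with stable objects via $\widetilde{X}$; stability is automatic since the phases of saddle connections are strictly ordered within each horizontal strip. The main obstacle will be establishing well-definedness and surjectivity of this inverse modulo $\Aut^\circ$: one must control the degenerations of horizontal trajectories at the boundary of $\underline{\Quad}(\surf)$, in particular recurrent trajectories and chains of saddle connections, and verify that every such limit arises from exactly one $\Aut^\circ$-orbit in $\Stap\D_{fd}(\Gamma_0)$, using the rigidity of the potential $W_\TT$ together with the connectedness of $\EG(\surf)$ provided by Remark \ref{rem:c.c.}. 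Local biholomorphy on the top-dimensional strata then follows from the matching of period coordinates on both sides.
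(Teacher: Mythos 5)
This statement is not proved in the paper at all: it is quoted verbatim from Bridgeland--Smith \cite[Theorem~1.2]{BS}, and the paper's ``proof'' is simply that citation. So what you have written is not an alternative to an internal argument but an attempt to re-derive the main theorem of \cite{BS} using the present paper's machinery, and as it stands it is only an outline of the strategy of \cite{BS} with the hard steps asserted rather than carried out.

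The concrete gaps are the following. First, your map $K$ is only defined on the cells $U(\h)$ attached to hearts reachable from $\h_0$ through $\EGp(\surfo)$; the connected component $\Stap\D_{fd}(\Gamma_0)$ is not a priori exhausted by such cells, and showing that the chart-by-chart construction extends to the whole component (in \cite{BS} this is an openness-and-closedness argument for the locus with reachable hearts, using the local homeomorphism property of the period/central charge map) is a substantial piece of work that the gluing picture alone does not supply. Second, the proposed inverse via the WKB (horizontal-strip) triangulation is undefined exactly on the locus of differentials possessing closed or recurrent horizontal trajectories; extending across these walls, and handling the degenerate differentials in $\underline{\Quad}(\surf)$, is the technical heart of \cite{BS} and is only acknowledged, not addressed, in your last paragraph. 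Third, ``stability is automatic'' is false as stated: identifying the semistable objects of the putative stability condition with saddle connections and verifying the Harder--Narasimhan axiom is a major part of the argument, not a consequence of phase ordering inside a single horizontal strip. Finally, the equivariance step quietly uses that $\Aut^\circ\D_{fd}(\Gamma_0)$ is an extension of the marked mapping class group $\MMCG(\surf)$ (not $\MCG(\surf)$) by $\ST(\Gamma_0)$, which is itself \cite[Theorem~9.9]{BS}; it cannot be invoked when the goal is to prove the Bridgeland--Smith theorem. Using Theorem~\ref{thm:main} and Theorem~\ref{thm:bijection} of this paper is legitimate (they do not depend on \cite{BS}), but they do not fill these gaps.
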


For our purpose, we prefer to deal with the space $\Quad(\surf)$ of quadratic differentials
on a fixed marked surface $\surf$ instead of the moduli space.
These two spaces of quadratic differentials differ
by the symmetry of the marked mapping class group $\MMCG(\surf)$, i.e.
$$\Quad_\heartsuit(\surf)=\Quad(\surf)/\MMCG(\surf).$$
Here, $\MMCG(\surf)$ of a marked surface $\surf$
is the group of isotopy classes of (orientation preserving) homeomorphisms of $\surf$,
where all homeomorphisms and isotopies are required to fix the set $\M$ of marked points as a set.

By \cite[Theorem~9.9]{BS}, there is the short exact sequence
\begin{equation}\label{eq:ses}
    1\to\ST(\Gamma_0)\to\Aut^\circ\D_{fd}(\Gamma_0)\to\MMCG(\surf)\to1
\end{equation}
and the theorem above can be alternatively stated as:
$    \Stap\D_{fd}(\Gamma_0)/\ST\cong\Quad(\surf).$
Thus there is a short exact sequence
\begin{equation}\label{eq:pi}
    1\to  \pi_1\Stap\D_{fd}(\Gamma_0) \to \pi_1 \Quad(\surf) \xrightarrow{\pi} \ST(\Gamma_0) \to 1.
\end{equation}

\subsection{On the Contractibility}
In this subsection, let $\surf$ be an annulus with $p$ and $q$ marked points on its boundary components respectively.

Suppose first $p\neq q$.
It is straightforward to calculate $\MMCG(\surf)$ in this case:
it is generated by the two rotations along the two boundary components.
More precisely, $\MCG(\surf)$ is the infinite cyclic group generated by
the Dehn twist $\Dehn{C}$ along the only (up to isotopy)
non-trivial simple closed curve in $\surf$.
The two rotations are the $p$-th and $q$-th roots of $\Dehn{C}$,
denoted by $r_0$ and $r_1$, respectively.
Then $\MMCG(\surf)$ is the abelian group with generators $r_0$ and $r_1$ and
with relation $r_0^p=r_1^q$, which fits into the following short exact sequence
\[
    1\to\ZZ\<r_0\>\to\MMCG(\surf)\to\ZZ_q\<r_1\>\to1.
\]
Besides $\underline{\xi}=r_0\cdot r_1$ is the universal rotation
that corresponds to $[1]$.

Next, as shown in \cite[\S12.3]{BS},
\begin{equation}\label{eq:pi1}
    \Quad_\heartsuit(\surf)\cong\Conf^n(\kong{C}^*)/\ZZ_q,
\end{equation}
where $\Conf^n(\kong{C}^*)$ denotes the configuration space of $n$ distinct points
in $\kong{C}^*$ and $\ZZ_q$ acts by multiplication by a $q$-th root of unity.
By the description of $\Br(\widetilde{A_{p,q}})$ in \cite{KP},
there is short exact sequence
\begin{equation}\label{eq:ses2}
    1\to\Br(\widetilde{A_{p,q}})\to\pi_1\Conf^n(\kong{C}^*)\to\ZZ\to1.
\end{equation}
As $\Quad_\heartsuit(\surf)$ consists of differentials of the form
\[
    \Theta(z)=\prod_{i=1}^n (z-z_i)\frac{\diff z^{\otimes2}}{z^{p+2}},\quad
    z_i\in\kong{C}^*,\quad z_i\neq z_j
\]
and considered modulo the action of $\kong{C}$ rescaling $z$.
Note that $z_i$ corresponds to the decorating points in $\surfo$,
the rotation $r_q$ becomes the $\kong{Z}_q$ symmetry at the origin
and the rotation $r_p$ becomes the $\kong{Z}_p$ symmetry at the infinity.
Thus, combining the short exact sequences above
and the calculation of fundamental groups of spaces in \eqref{eq:pi1},
we have the commutative diagram \eqref{eq:comm},
which implies the dashed short exact sequence.
\begin{equation}\label{eq:comm}
\xymatrix@C=3pc{&1\ar[d]&1\ar@{-->}[d]\\
    &\Br(\widetilde{A_{p,q}})\ar@{=}[r]\ar[d]&\Br(\widetilde{A_{p,q}})\ar@{-->}[d]\ar[r]
        &1\ar[d]\\
    1\ar[r]&\pi_1\Conf^n(\kong{C}^*)\ar[d]\ar[r]&\pi_1\Quad_\heartsuit(\surf)\ar[r]\ar@{-->}[d]&
        \ZZ_q\ar[r]\ar@{=}[d]&1\\
    1\ar[r]&\ZZ\ar[d]\ar[r]&\MMCG(\surf)\ar@{-->}[d]\ar[r]&\ZZ_q\ar[r]&1\\
    &1&1
}.\end{equation}
Therefore we have $\pi_1\Quad(\surf)=\Br(\widetilde{A_{p,q}})$ and hence
$\pi_1\Quad(\surf)\cong\ST(\Gamma_0)$ by Theorem~\ref{thm:mainA}.
Further, by examining the generators, we deduce that the surjective map $\pi$ in \eqref{eq:pi}
gives the isomorphism above.
Thus, $\Stap\D_{fd}(\Gamma_0)$ is simply connected.

In the case when $p=q$, $\MMCG(\surfo)$ contains one more $\kong{Z}_2$ symmetry.
In the same way, we will have $\pi_1\Quad(\surf)=\Br(\widetilde{A_{p,q}})$ and simply connectedness.

\begin{theorem}\label{thm:ss}
Let $\surf$ be an annulus (without punctures) and
$\D_{fd}(\Gamma_0)$ be the 3-CY category associated to some triangulation of $\surf$.
Then $\Stap\D_{fd}(\Gamma_0)$ is the universal cover of $\Conf^n(\kong{C}^*)$.
\end{theorem}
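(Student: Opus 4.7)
The plan is to reduce the theorem to two claims already established, or all but established, in the discussion preceding it: that $\Stap\D_{fd}(\Gamma_0)$ is simply connected, and that it admits a covering map to $\Conf^n(\kong{C}^*)$. By uniqueness of the universal cover, these two properties together yield the theorem.

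For simple-connectedness I would use the short exact sequence~\eqref{eq:pi}, namely
\begin{equation*}
    1\to \pi_1\Stap\D_{fd}(\Gamma_0)\to \pi_1\Quad(\surf)\xrightarrow{\pi}\ST(\Gamma_0)\to 1.
\end{equation*}
The diagram chase in~\eqref{eq:comm} identifies $\pi_1\Quad(\surf)$ with $\Br(\widetilde{A_{p,q}})$, and Theorem~\ref{thm:mainA} identifies $\ST(\Gamma_0)$ with the same group. One then checks that the map $\pi$ coincides with this composite isomorphism by matching standard generators on both sides: under the identification~\eqref{eq:pi1}, a small loop in $\Conf^n(\kong{C}^*)$ around the $i$-th marked point $z_i$ corresponds, via the description in~\cite{KP}, to the braid twist $\Bt{s_i}$ along the dual closed arc in $\surfo$, and this in turn corresponds under $\pi$ to the spherical twist $\phi_{S_i}$ generating $\ST(\Gamma_0)$. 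With $\pi$ an isomorphism, the sequence forces $\pi_1\Stap\D_{fd}(\Gamma_0)=1$.

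For the covering map, I would argue that the composition $\pi_1\underline{\Quad}(\surf)\twoheadrightarrow\MMCG(\surf)\twoheadrightarrow\ZZ_q$ extracted from the two rightmost columns of~\eqref{eq:comm} agrees with the surjection $\pi_1\underline{\Quad}(\surf)\twoheadrightarrow\ZZ_q$ of its middle row. Hence, as subgroups of $\pi_1\underline{\Quad}(\surf)$, the kernel $\pi_1\Quad(\surf)=\Br(\widetilde{A_{p,q}})$ is contained in the kernel $\pi_1\Conf^n(\kong{C}^*)$, so by the Galois correspondence the cover $\Quad(\surf)\to\underline{\Quad}(\surf)$ factors through a cover $\Quad(\surf)\to\Conf^n(\kong{C}^*)$. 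Composing with $\Stap\D_{fd}(\Gamma_0)\to\Quad(\surf)$ from~\eqref{eq:pi} produces the desired covering.

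The main obstacle is the generator-matching step for $\pi$ in the second paragraph, which requires the explicit geometric description of $\Br(\widetilde{A_{p,q}})$ from~\cite{KP} (in terms of half-twists among the $z_i$) and the identification of these with braid twists along the closed arcs of $\TT_0^*$; the categorical side is then controlled by Theorem~\ref{thm:mainA} sending $\Bt{s_i}\mapsto\phi_{S_i}$. In the symmetric case $p=q$, the additional $\kong{Z}_2$ symmetry noted just before the theorem must be inserted into~\eqref{eq:comm}, but the same diagram chase and generator comparison apply verbatim.
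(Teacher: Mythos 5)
Your proposal follows essentially the same route as the paper: the diagram \eqref{eq:comm} gives $\pi_1\Quad(\surf)\cong\Br(\widetilde{A_{p,q}})$, Theorem~\ref{thm:mainA} gives $\ST(\Gamma_0)\cong\Br(\widetilde{A_{p,q}})$, a generator comparison shows the surjection $\pi$ in \eqref{eq:pi} realizes this isomorphism, hence $\pi_1\Stap\D_{fd}(\Gamma_0)=1$, and your Galois-correspondence step merely makes explicit the covering $\Stap\D_{fd}(\Gamma_0)\to\Conf^n(\kong{C}^*)$ that the paper leaves implicit. One cosmetic caveat: the element of $\pi_1\Conf^n(\kong{C}^*)$ matching $\Bt{s_i}$ is the half-twist exchanging the two zeroes at the endpoints of $s_i$ (a standard generator in the sense of \cite{KP}), not a small loop around a single point $z_i$.
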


By \cite[Theorem~2.7]{CP}, the universal cover of $\Conf^n(\kong{C}^*)$ is contractible.
So we have:

\begin{corollary}
$\Stap\D_{fd}(\Gamma_0)$ is contractible.
\end{corollary}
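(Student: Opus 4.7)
The plan is a one-step deduction combining the two results stated immediately before the corollary. Theorem~\ref{thm:ss} identifies $\Stap\D_{fd}(\Gamma_0)$ with the universal cover of $\Conf^n(\kong{C}^*)$, and the cited result of Charney--Peifer asserts that this universal cover is contractible; putting these together gives the corollary instantly.

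All of the genuine work has been done upstream. Theorem~\ref{thm:mainA} supplies the isomorphism $\ST(\Gamma_0) \cong \Br(\widetilde{A_{p,q}})$ via the braid-versus-spherical twist correspondence, the Bridgeland--Smith machinery gives the identification \eqref{eq:pi1} between $\underline{\Quad}(\surf)$ and $\Conf^n(\kong{C}^*)/\ZZ_q$, and the diagram \eqref{eq:comm} yields simple connectedness of $\Stap\D_{fd}(\Gamma_0)$ together with its appearance as the universal cover of $\Conf^n(\kong{C}^*)$. After all that, the only remaining input is the topological one about $\Conf^n(\kong{C}^*)$, which is precisely \cite[Theorem~2.7]{CP}.

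The main obstacle therefore lies entirely inside Theorem~\ref{thm:ss} rather than in the corollary itself. If one preferred a self-contained route in place of citing \cite{CP}, the standard alternative would be to show that $\Conf^n(\kong{C}^*)$ is a $K(\pi,1)$ by induction using the Fadell--Neuwirth fibration
\begin{equation*}
    \Conf^{n-1}(\kong{C}^* \setminus \{z_0\}) \to \Conf^n(\kong{C}^*) \to \kong{C}^*,
\end{equation*}
whose base $\kong{C}^*$ is aspherical and whose fibre is aspherical by induction; the simply connected cover of a $K(\pi,1)$ is then automatically contractible, and Theorem~\ref{thm:ss} identifies that cover with $\Stap\D_{fd}(\Gamma_0)$.
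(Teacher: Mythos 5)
Your proposal is correct and is exactly the paper's argument: combine Theorem~\ref{thm:ss} with \cite[Theorem~2.7]{CP} (note the reference is Cohen--Pakianathan, not Charney--Peifer) to conclude that $\Stap\D_{fd}(\Gamma_0)$, being the universal cover of $\Conf^n(\kong{C}^*)$, is contractible. The alternative Fadell--Neuwirth asperity argument you sketch is a reasonable substitute for the citation but is not needed.
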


\section{Proof of Proposition~\ref{pp:key}}\label{app:B}
\subsection{Preparation}
See \cite[Appendix~A]{QZ2} for the details of homological algebra calculations for the string model in \S~\ref{sec:sm}.
The key results are the following two.

\begin{proposition}\cite[Corollary A.9]{QZ2}\label{pp:qz2}
Let $\eta_1, \eta_2$ be two closed arcs in $\surfo$ that share an endpoint.
Fix orientations of them and suppose that they share the starting endpoint $Z\in\Tri$.
Then there is a unique non-zero homomorphism
$\zeta^Z_{12}\in\Hom^\bullet(X_\alpha,X_\beta)$ induced by $Z$.
Moreover, suppose there is another closed arc $\eta_3$ starting at $Z$,
such that $\eta_1,\eta_3,\eta_2$ are in a clockwise order at $Z$.
Then $\zeta^Z_{12}$ is the composition of $\zeta^Z_{13}$ with $\zeta^Z_{23}$.
\end{proposition}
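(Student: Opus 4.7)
The plan is to work with the explicit string model of $X_{\eta_i}$ from \S~\ref{sec:sm}, so that morphisms in $\Hom^\bullet(X_{\eta_1},X_{\eta_2})$ become matrices of scalars indexed by pairs of intersections of the arcs with $\TT_0$. Choose representatives of $\eta_1$ and $\eta_2$ in minimal position; near the shared starting endpoint $Z\in\Tri$, both arcs enter the unique triangle $T$ of $\TT_0$ containing $Z$, so the first direct summands of $X_{\eta_1}$ and $X_{\eta_2}$ are $S_{j_1}[\delta_1]$ and $S_{j'_1}[\delta'_1]$ for some edges $\gamma_{j_1},\gamma_{j'_1}$ of $T$. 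Crucially, the data at $Z$ (the angle $\eta_1 \to \eta_2$ measured clockwise inside $T$) determines a preferred vertex of $T$ and hence a preferred arrow in the Ext-quiver of $\h_0$.

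First I would construct $\zeta^Z_{12}$. The $3$-cycle arrows inside $T$ (cf.\ Figure~\ref{fig:quiver}) together with the loop at $Z$ provide canonical morphisms between $S_{j_1}$ and $S_{j'_1}$ in $\EE_0$ of degree $1$, $2$, or $3$; the one dictated by the geometry at $Z$, combined with the appropriate degree shift, yields an initial map between the first summands. I would then extend it segment by segment along the two strings, checking $\diff\,\zeta = \zeta\,\diff$ by repeatedly invoking the multiplication relations \eqref{eq:rels}; the formality of $\EE_0$ (noted just before \eqref{eq:EE}) ensures that no higher $A_\infty$-corrections intrude. Uniqueness of $\zeta^Z_{12}$ up to scalar then follows from a local dimension count: any element of $\Hom^\bullet(X_{\eta_1},X_{\eta_2})$ induced by $Z$ is determined by its leading entry at the $Z$-end, which lies in a one-dimensional $\k$-space pinned down by $T$ and the prescribed angle.

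For the composition assertion, the clockwise order of $\eta_1,\eta_3,\eta_2$ at $Z$ produces morphisms $\zeta^Z_{13},\zeta^Z_{23},\zeta^Z_{12}$, and the composite $\zeta^Z_{23}\circ\zeta^Z_{13}\colon X_{\eta_1}\to X_{\eta_2}$ is itself a non-zero $Z$-induced morphism; by the uniqueness already established it must equal $\zeta^Z_{12}$ up to scalar. To pin the scalar down I would normalize each $\zeta^Z_{ij}$ against a fixed generator of the top-degree piece $\k\cdot e_Z^*$ of the loop algebra at $Z$, so that the cyclic symmetry around $Z$ makes all three morphisms simultaneously compatible. The main obstacle, I expect, is the bookkeeping of degree shifts $\delta_i$ and signs along the common portion of the arcs near $Z$: verifying that the chain-map condition holds globally (rather than just locally at $T$) reduces to a careful induction on the number of common triangles the two arcs traverse, and relies essentially on formality of $\EE_0$ to avoid $A_\infty$-corrections that would otherwise obstruct the naive extension.
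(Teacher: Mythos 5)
First, note that the paper does not actually prove this proposition: it is imported verbatim from \cite[Corollary~A.9]{QZ2}, whose Appendix~A carries out exactly the kind of explicit string-model computation you are sketching. So your overall strategy is the right one in spirit, but as a proof your proposal has genuine gaps. The most serious is that $\Hom^\bullet(X_{\eta_1},X_{\eta_2})$ is a derived (homotopy-category) Hom: your argument constructs a chain map and argues uniqueness ``by its leading entry,'' but never shows that the constructed map is not null-homotopic, nor that uniqueness holds modulo homotopy. Since the leading entry at the $Z$-end is in general an arrow of degree $1$, $2$ or $3$ (not a scalar identity), it lies in the arrow ideal, which is precisely where entries of $\diff h\pm h\diff$ live; ruling out that $\zeta^Z_{12}$ is a coboundary, and that two $Z$-induced cocycles differing beyond the leading entry are cohomologous, requires a separate homotopy analysis that is absent. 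Existence of the extension past the point where the two strings diverge is also nontrivial (this is the usual overlap/graph-map condition for string objects) and is asserted rather than verified.

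The second gap is in the composition statement. You declare that $\zeta^Z_{23}\circ\zeta^Z_{13}$ ``is itself a non-zero $Z$-induced morphism,'' but non-vanishing of this composite (again, non-vanishing in cohomology, not just of the leading matrix entry) is exactly the content to be proved, and it is exactly here that the clockwise-order hypothesis must enter: with the opposite cyclic order at $Z$ the composite does not give $\zeta^Z_{12}$, so any argument that never uses the ordering in a substantive way cannot be complete. One must check, via the relations \eqref{eq:rels} inside the triangle of $\TT_0$ containing $Z$, that the two leading arrows compose to the correct arrow precisely when $\eta_1,\eta_3,\eta_2$ are clockwise at $Z$, and then again control homotopies. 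Finally, your normalization device is not well defined: there is no ``loop algebra at $Z$'' in $\EE_0$ --- the degree-$3$ loops of the Ext quiver sit at the simples $S_i$, i.e.\ at arcs of $\TT_0$, not at decorating points --- so pinning the scalar in the identity $\zeta^Z_{12}=\zeta^Z_{23}\circ\zeta^Z_{13}$ needs a different (and explicit) choice of generators, which is part of what \cite{QZ2} sets up.
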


\begin{proposition}\cite[Proposition~A.11]{QZ2}\label{pp:ses2}
Let $\alpha, \beta$ and $\eta$ be three closed arcs in $\overline{\cA}(\surfo)$ such that
at least one of them is in $\cA(\surfo)$.
Moreover, we require that $\alpha,\eta,\beta$ are in a clockwise order
to form a contractible triangle in $\surfo$.
Then there are representatives $X_?$ in $\widetilde{X}(?)$ for $?=\alpha,\beta,\eta$ such that
there is a non-trivial triangle
\begin{gather}\label{eq:tri}
    X_\beta \to X_\eta \to X_\alpha \to X_\beta[1],
\end{gather}
where the homomorphisms are of the form in Proposition~\ref{pp:qz2}.
\end{proposition}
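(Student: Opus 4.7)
The plan is to realize $X_\eta$ as an explicit mapping cone built from $X_\alpha$ and $X_\beta$ via the string model of Section~\ref{sec:sm}, and then to identify the three structural maps of that cone with the canonical ones of Proposition~\ref{pp:qz2}. Label the vertices of the contractible triangle so that $Z_1$ is the common endpoint of $\alpha$ and $\beta$, $Z_2$ that of $\alpha$ and $\eta$, and $Z_0$ that of $\beta$ and $\eta$. Orient $\alpha$ and $\beta$ so that both start at $Z_1$, and orient $\eta$ from $Z_2$ to $Z_0$. First I would choose minimal-position representatives of $\alpha$ and $\beta$ and take their concatenation at $Z_1$, followed by a small perturbation that pushes the resulting curve across the $\TT_0$-triangle adjacent to $Z_1$ on the side of the contractible disc bounded by $\alpha,\eta,\beta$. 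Contractibility of that disc ensures the perturbed curve lies in the isotopy class of $\eta$, and the very same argument used in the proof of Lemma~\ref{lem:dcp} (cf.\ Figure~\ref{fig:alphabeta}) shows it is in minimal position, so $\Int(\TT_0,\eta)=\Int(\TT_0,\alpha)+\Int(\TT_0,\beta)$ and no intersections collide.

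With this representative, the ordered list of intersection points $V_1,\ldots,V_m$ of $\eta$ with $\TT_0$ is exactly the $\alpha$-list followed by the $\beta$-list. Consequently the graded module $|X_\eta|$ decomposes as $|X_\alpha|\oplus|X_\beta[k]|$ for some integer shift $k$ forced by the single bridging segment of $\eta$ that crosses the $\TT_0$-triangle containing $Z_1$; this $k$ is computed by iterating the rules \eqref{eq:1} and \eqref{eq:2} through the bridging arrow. Every other segment of $\eta$ lies entirely within the $\alpha$-part or the $\beta$-part and contributes exactly one summand of $d_{X_\alpha}$ or $d_{X_\beta[k]}$, while the bridging segment contributes a single off-diagonal graded arrow $f$. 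Thus the differential takes the block form
\begin{gather*}
    d_{X_\eta}=\begin{pmatrix} d_{X_\alpha} & f \\ 0 & d_{X_\beta[k]} \end{pmatrix},
\end{gather*}
exhibiting $X_\eta=\Cone\bigl(X_\alpha[-1]\xrightarrow{f}X_\beta[k]\bigr)$. Replacing the representative $X_\beta\in\widetilde{X}(\beta)$ by $X_\beta[k]$ yields a triangle of the required shape $X_\beta\to X_\eta\to X_\alpha\to X_\beta[1]$, whose connecting map is $f$.

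Finally I would identify the three structural maps of this triangle with those supplied by Proposition~\ref{pp:qz2}. The bridging arrow $f$ is non-zero by construction and is produced entirely from the local geometry at the shared endpoint $Z_1$; by the uniqueness clause of Proposition~\ref{pp:qz2} applied to the pair $(\alpha,\beta[k])$, it follows that $f=\lambda\,\zeta^{Z_1}_{\alpha\beta}$ for some $\lambda\in\k^{*}$. The structural inclusion $X_\beta\hookrightarrow X_\eta$ is, summand by summand, the identity on the $\beta$-part and zero elsewhere, and the structural projection $X_\eta\twoheadrightarrow X_\alpha$ behaves dually; these are non-zero morphisms produced from the shared endpoints $Z_0$ and $Z_2$ respectively, hence again coincide up to scalar with $\zeta^{Z_0}$ and $\zeta^{Z_2}$ by the same uniqueness. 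The hard part will be controlling the degenerate geometry when some of $\alpha,\beta,\eta$ are L-arcs with coincident endpoints beyond $Z_1$: the concatenation–perturbation argument of the first step can create spurious intersections at these coincident endpoints and fail to produce a minimal-position representative. The standing hypothesis that at least one of $\alpha,\beta,\eta$ lies in $\cA(\surfo)$ rules this out and makes the geometric decomposition go through cleanly; the non-triviality of the triangle then follows immediately from $f\neq 0$.
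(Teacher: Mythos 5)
This proposition is not proved in the paper: it is imported verbatim from \cite[Proposition~A.11]{QZ2}, so there is no in-paper argument to compare your proposal against. Evaluated on its own, your plan takes the natural route: realize $\eta$ as a small perturbation of the concatenation of $\alpha$ and $\beta$ at the shared decorating point, so that the string-model intersection list for $\eta$ is that of $\alpha$ followed by that of $\beta$ with a single bridging segment, and read off $X_\eta$ as a two-term extension. This is almost certainly the shape of the argument in \cite{QZ2}.

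Two steps are treated too informally. First, the minimal-position claim for the concatenation does not follow from Lemma~\ref{lem:dcp} as you assert: that lemma starts from a minimal-position representative of $\eta$ and carves out $\alpha$ and $\beta$, which is the reverse of what you need. You must show that gluing two minimal-position arcs across a contractible triangle and perturbing off the vertex cannot produce a bigon with any $\gamma_j\in\TT_0$; this needs its own innermost-disc argument, and the standing hypothesis that at least one of the three arcs lies in $\cA(\surfo)$ (and hence has distinct endpoints) belongs in that argument, not only in the final sentence. Second, your displayed block form and your cone description are inconsistent. With $f$ in the upper-right corner of $d_{X_\eta}$, the $\alpha$-block is a subcomplex and the $\beta$-block the quotient, which yields the triangle $X_\alpha\to X_\eta\to X_\beta[k]\to X_\alpha[1]$, not the stated $X_\beta\to X_\eta\to X_\alpha\to X_\beta[1]$. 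Obtaining the latter requires the bridging arrow to be induced in the opposite sense (lower-left entry of $d_{X_\eta}$, equivalently $X_\eta=\Cone(X_\alpha[-1]\to X_\beta[k])$), and that orientation is precisely what the clockwise-ordering hypothesis on $\alpha,\eta,\beta$ is there to force — so it must be checked against the orientation rule of Figure~\ref{fig:arrows}, not assumed. With those two points repaired, the identification of the remaining two structural maps with the canonical morphisms of Proposition~\ref{pp:qz2} at the corners $Z_0$ and $Z_2$, via its uniqueness clause, is fine.
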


\begin{remark}\label{rem:l}
Note that, in the setting of Lemma~\ref{lem:dcp},
the line segment $l$, from $Z_0$ to some point $Y$ in $\eta$ (cf. Figure~\ref{fig:line}),
plays an important role.
We will say $\eta$ decomposes into $\alpha$ and $\beta$ w.r.t. $l$.

Also note that the condition $\Int(\eta_1,\eta_2)=\frac{1}{2}$ in $3^\circ$ forces that
$\eta_1,\eta_2$ are closed arcs in $\cA(\surfo)$.
\end{remark}

\subsection{The first induction}
Use double induction, the first on
\begin{gather}\label{eq:I}
    I=\Int(\TT_0,\eta_1)+\Int(\TT_0,\eta_2).
\end{gather}
The starting case is when $I=2$.
Then both $\eta_1$ and $\eta_2$ are in $\TT_0^*$,
since the only general closed arcs that have exactly one intersection with $\TT_0$
are the arcs in $\TT_0^*$.
It is straightforward to check the proposition in this case.
Now suppose that the proposition holds for any $(\eta_1,\eta_2)$
with $I\leq r$ and consider the case when $I=r+1$.

First, let us prove $1^\circ$ for $X_{\eta}$ (where $\eta=\eta_1$ or $\eta=\eta_2$ in $\cA(\surfo)$).
Apply Lemma~\ref{lem:dcp} to decompose $\eta$ into $\alpha$ and $\beta$ in $\cA(\surfo)$ with $\Int(\alpha,\beta)=\tfrac{1}{2}$ (cf. Figure~\ref{fig:line}).
Then by Proposition~\ref{pp:ses2}, there is a non-trivial triangle \eqref{eq:tri}
By the inductive assumption, the proposition holds for $\alpha$ and $\beta$.
Then $\alpha,\beta\in\cA(\surfo)$ implies $X_{\alpha},X_{\beta}\in\Sph(\Gamma_0)$
and $\Int(\alpha,\beta)=\tfrac{1}{2}$ implies
\[
    \dim\Hom^\bullet(X_{\alpha},X_{\beta})=1.
\]
Hence \eqref{eq:tri} implies
\begin{gather}\label{eq:stwist}
    X_{\eta}=\phi_{X_\alpha}(X_\beta)=\phi^{-1}_{X_\beta}(X_\alpha),
\end{gather}
and thus $X_{\eta}$ is also in $\Sph(\Gamma_0)$.

\subsection{The second induction}
Next, we prove $2^\circ$ and $3^\circ$.
Use the second induction on
\[\min\{\Int(\TT_0,\eta_1),\Int(\TT_0,\eta_2)\}.\]
Without loss of generality, suppose that
\begin{gather}\label{eq:leq}
    \Int(\TT_0,\eta_1)\leq\Int(\TT_0,\eta_2).
\end{gather}

The starting case is when $\Int(\TT_0,\eta_1)=1$,
which implies that $\eta_1=s_i$ for some $i$.
Note that we have $\Int(\TT_0,\eta_2)>1$.
Applying Lemma~\ref{lem:dcp}
to decompose $\eta_2$ into $\alpha$ and $\beta$, w.r.t. some decorating point $Z_0$.
As above, we get a non-trivial triangle \eqref{eq:tri} by Proposition~\ref{pp:ses2}.
There are two cases.

\begin{description}
\item[Case i] If $Z_0$ is not an endpoint $\eta_1$.\end{description}
Then the inductive assumption
holds for $(\eta_1,\alpha)$ and $(\eta_1,\beta)$.
For $2^\circ$,
we have $\Int(\eta_1,\alpha)=0=\Int(\eta_1,\beta)$ and hence
\begin{gather}\label{eq:00}
    \Hom^\bullet(X_{\eta_1},X_{\alpha})=0
    =\Hom^\bullet(X_{\eta_1},X_{\beta}).
\end{gather}
Applying $\Hom(X_{\eta_1},?)$
to triangle \eqref{eq:tri}, we obtain \eqref{eq:=0}.
For $3^\circ$, we have
$$\{\Int(\eta_1,\alpha),\Int(\eta_1,\beta)\}=\{\tfrac{1}{2},0\},$$
and hence one of
$\Hom^\bullet(X_{\eta_1},X_{\alpha})$ and $\Hom^\bullet(X_{\eta_1},X_{\beta})$
is zero while the other one has dimension one.
Applying $\Hom(X_{\eta_1},?)$
to triangle \eqref{eq:tri}, we obtain \eqref{eq:=1}.
\begin{description}
\item[Case ii] If $Z_0$ is an endpoint $\eta_1$.
\end{description}
\begin{figure}[hb]\centering
\begin{tikzpicture}[yscale=.5,xscale=.6,rotate=180]
\draw[red, thick,->-=.51,>=stealth](-2,3)to[bend right](2,3);
\draw[red,thick](-2,2.5)to[bend right=7](0,1.5)(2,2.5)to[bend right=7](0,1.5);

\draw[](-2,1.9)node{$\alpha$}(2,1.9)node{$\beta$};
\draw[NavyBlue, thick](-2,0)node{$\bullet$}to(2,0)node{$\bullet$}to(0,4)node{$\bullet$}to(-2,0)
    (0,1.5)node[white]{$\bullet$}node[red]{$\circ$} (0,2.5)node[below,black]{$\eta_2$};
\end{tikzpicture}
\quad
\begin{tikzpicture}[yscale=.5,xscale=.6,rotate=0]
\draw[red, thick,-<-=.4,>=stealth](-2,1.5)to[bend right](2,1.5);
\draw[red,thick](-2,2.5)to[bend right=7](0,1.5)(2,2.5)to[bend right=7](0,1.5);

\draw[](2,3)node{$\alpha$}(-2,3)node{$\beta$};
\draw[NavyBlue, thick](-2,0)node{$\bullet$}to(2,0)node{$\bullet$}to(0,4)node{$\bullet$}to(-2,0)
    (0,1.5)node[white]{$\bullet$}node[red]{$\circ$} (0,.9)node[below,black]{$\eta_2$};
\end{tikzpicture}
\quad
\begin{tikzpicture}[yscale=.5,xscale=.6,rotate=180]
\draw[red, thick,->-=.4,>=stealth](-1.5,-1.5).. controls +(80:5) and +(100:5) ..(1.5,-1.5);
\draw[red,thick](-1,-1.5)to[bend right=-7](0,1.5)(1,-1.5)to[bend right=7](0,1.5);

\draw[](-1,-1.5)node[above]{$\alpha$}(1,-1.5)node[above]{$\beta$};
\draw[NavyBlue, thick](-2,0)node{$\bullet$}to(2,0)node{$\bullet$}to(0,4)node{$\bullet$}to(-2,0)
    (0,1.5)node[white]{$\bullet$}node[red]{$\circ$} (0,2.3)node[below,black]{$\eta_2$};
\end{tikzpicture}

\caption{The three cases for possible position of $\alpha$ and $\beta$}
\label{fig:3cases}
\end{figure}
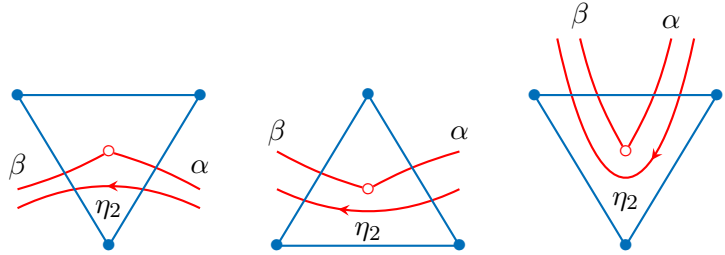

For $2^\circ$, we have
$\Int(\eta_1,\alpha)=\tfrac{1}{2}=\Int(\eta_1,\beta)$
and thus (by the inductive assumption)
\begin{gather}\label{eq:homs1}
    \dim\Hom^\bullet(X_{\eta_1},X_{\alpha})=1
    =\dim\Hom^\bullet(X_{\eta_1},X_{\beta}).
\end{gather}
There are three cases (as shown in Figure~\ref{fig:3cases})
for the possible positions of $\alpha$ and $\beta$
in the triangle $\Lambda_0$ that contains $Z_0$. 
Since $\eta_1$ does not intersect $\eta_2$, the line segments of $\eta_1,\alpha,\beta$ in $\Lambda_0$
are in a clockwise order.
By Proposition~\ref{pp:qz2},
when applying $\Hom(X_{\eta_1},?)$ to the triangle \eqref{eq:tri},
there will be an isomorphism 
\begin{gather}\label{eq:iso}
    \Hom^t(X_{\eta_1},\alpha)\xrightarrow{\simeq}\Hom^t(X_{\eta_1},\beta[1])
\end{gather}
in the long exact sequence for some $t\in\ZZ$,
which implies \eqref{eq:=0} by \eqref{eq:homs1}.

For $3^\circ$,
without loss of generality, suppose that
$\alpha$ and $\eta_1$ do not intersect in $\surfo-\Tri$
but share both endpoints,
and $\Int(\eta_1,\beta)=\tfrac{1}{2}$.
Then $\dim\Hom^\bullet(X_{\eta_1},X_{\beta})=1$.
If $\alpha$ is in $\TT_0^*$, then we have $\alpha=s_i=\eta_1$.
Applying the inductive assumption to $(\eta_1, \beta)$, we have
$\eta_2=\Bt{\eta_1}(\beta)$ and $X_{\eta_2}=\phi_{X_{\eta_1}}(X_\beta)$.
Then
$$    \dim\Hom^\bullet(X_{\eta_1},X_{\eta_2})=\dim\Hom^\bullet(X_{\eta_1},X_{\beta})=1,$$
as required.
Otherwise, apply Lemma~\ref{lem:dcp} to decompose $\alpha$ into closed arcs $\alpha'$ and $\beta'$.
By applying the inductive assumption to $(\eta_1,\alpha')$ and $(\eta_1,\beta')$,
we deduce that
\[
    \dim\Hom^\bullet(X_{\eta_1},{X}_{\alpha'})=1=
    \dim\Hom^\bullet(X_{\eta_1},{X}_{\beta'}).
\]
and hence $\dim\Hom^\bullet(X_{\eta_1},X_{\alpha})$ is $0$ or $2$.
Moreover, Proposition~\ref{pp:qz2} implies an isomorphism between a subspace
of $\Hom^\bullet(X_{\eta_1},\alpha)$ and $\Hom^t(X_{\eta_1},\beta[1])$
(cf. \eqref{eq:iso}), which implies
\[
    \dim\Hom^\bullet(X_{\eta_1},X_{\eta_2})\leq
        \dim\Hom^\bullet(X_{\eta_1},X_{\alpha})+
        \dim\Hom^\bullet(X_{\eta_1},X_{\beta})-2=1.
\]
One the other hand,
\[
    \dim\Hom^\bullet(X_{\eta_1},X_{\eta_2})\equiv
        \dim\Hom^\bullet(X_{\eta_1},X_{\alpha})+
        \dim\Hom^\bullet(X_{\eta_1},X_{\beta})\equiv1(\mod 2).
\]
Therefore \eqref{eq:=1} holds as required.

\subsection{Inductive step of the second induction}
To finish the proof, we only need to show that
if $2^\circ$ and $3^\circ$ hold for $I\leq r$ or $I=r+1$ with $\Int(\TT_0,\eta_1)\leq r_1$,
then they hold for $I=r+1$ with $\Int(\TT_0,\eta_1)=r_1+1$
(recall that $I$ is defined in \eqref{eq:I} and we assume \eqref{eq:leq}).

Apply Lemma~\ref{lem:dcp} to decompose $\eta=\eta_1$ into $\alpha, \beta$ w.r.t.
some decorating point $Z_0$ and some line segment $l$ (see Figure~\ref{fig:line}).
\begin{description}
\item[Case i] The line segment $l$
does not intersect $\eta_2$ in $\surfo-\Tri$.\end{description}
Then neither $\alpha$ nor $\beta$ intersect $\eta_2$ in $\surfo-\Tri$.
Since $\eta_1$ and $\eta_2$ don't share two endpoints,
without loss of generality, suppose that
the common endpoint of $\eta_1$ and $\beta$ is not an endpoint of $\eta_2$.
Consider
\[\eta_1'=\Bt{\beta}(\eta_1)=\alpha\quad
    \text{and}\quad \eta_2'=\Bt{\beta}(\eta_2).
\]
See Figure~\ref{fig:BT beta} for the two possibilities, where $Z'$ and $Z''$ could coincide.
As in \eqref{eq:stwist}, we have
\[
    X_{\eta_1}=\phi^{-1}_{X_\beta}(X_\alpha)=\phi^{-1}_{X_\beta}(X_{\eta_1'})
    \quad\text{and}\quad
    X_{\eta_2}=\phi^{-1}_{X_\beta}(X_{\eta_2'}),
\]
which implies
\begin{gather}\label{eq:stwist2}
    \Hom^\bullet(X_{\eta_1},X_{\eta_2})\simeq
    \Hom^\bullet(X_{\eta_1'},X_{\eta_2'})
\end{gather}
Moreover, we have
\begin{gather*}
    \Int(\TT_0,\eta_1')=\Int(\TT_0,\alpha)=
    \Int(\TT_0,\eta_1)-\Int(\TT_0,\beta);\\
    \Int(\TT_0,\eta_2')=\Int(\TT_0,\Bt{\beta}(\eta_2))\leq
    \Int(\TT_0,\eta_2)+\Int(\TT_0,\beta).
\end{gather*}
Thus $2^\circ$ or $3^\circ$ hold for $(\eta_1',\eta_2')$
by the inductive assumption, which implies that they also hold for
$(\eta_1,\eta_2)$ by \eqref{eq:stwist2}.

\begin{description}
\item[Case ii]
The line segment $l$ intersects $\eta_2$.\end{description}
Let $Y'$ be their nearest intersection to $Z_0$.
Then we can decompose $\eta_2$ to $\alpha$ and $\beta$,
using the line segment $l'=YZ_0(\subset l)$ as in Lemma~\ref{lem:dcp}.
There is a small difference here, that $Z_0$ might be an endpoint of $\eta_2$,
so $\alpha$ and $\beta$ are in $\CA(\surfo)$ (i.e. they might be L-arc instead of closed arc).
Since $Z_0$ is not an endpoint of $\eta_1$, we deduce that
\[\begin{array}{c}
    \frac{1}{2}\geq\Int(\eta_1,\eta_2)=\Int(\eta_1,\alpha)+\Int(\eta_1,\beta).
\end{array}\]
As $\Int(\TT_0,\alpha)+\Int(\TT_0,\beta)=\Int(\TT_0,\eta_2)$,
the inductive assumption applies to $(\eta_1,\alpha)$ and $(\eta_1,\beta)$.
Then $\dim\Hom^\bullet(X_{\eta_1},{X}_{\alpha})$ and $\dim\Hom^\bullet(X_{\eta_1},{X}_{\beta})$
are both zero (for $2^\circ$) and are $\{0,1\}$ for $3^\circ$.
Either way, we will have $\Hom^\bullet(X_{\eta_1},X_{\eta_2})=2\Int(\eta_1,\eta_2)$
as required.

\begin{figure}[t]\centering
\begin{tikzpicture}[scale=.85]
\draw[red,thick](0,0).. controls +(90:1) and +(-90:1) ..(3,2);
\draw[red,thick](6,0).. controls +(180:1) and +(0:1) ..(3,2);

\draw[red,thick](0,0).. controls +(90:.7) and +(210:1) ..(3,1)
    .. controls +(30:1) and +(180:1) ..(6,0);

\draw[red,thick](6,.5).. controls +(180:1) and +(0:1) ..(4.5,3);

\draw(0,0)node[white] {$\bullet$} node[red]{$\circ$}node[left]{}
     (3,2)node[white] {$\bullet$} node[red]{$\circ$}node[above]{}
     (6,0)node[white] {$\bullet$} node[red]{$\circ$}node[right]{$Z'$}
     (6,0.5)node[white] {$\bullet$} node[red]{$\circ$}node[right]{$Z''$}
     (4.5,3)node[white] {$\bullet$} node[red]{$\circ$}node[right]{}
     (5.5,2)node[right]{$\eta_2=\eta_2'$}
     (4.8,1)node {$\alpha$}(1.5,1.3)node {$\beta$}(3,0.9)node[below]
     {$\eta_1$};
\end{tikzpicture}
\quad
\begin{tikzpicture}[scale=.85]
\draw[red,thick](0,0).. controls +(90:1) and +(-90:1) ..(3,2);
\draw[red,thick](6,0).. controls +(180:1) and +(0:1) ..(3,2);

\draw[red,thick](0,0).. controls +(90:.7) and +(210:1) ..(3,1)
    .. controls +(30:1) and +(180:1) ..(6,0);

\draw[red,thick](6,.5).. controls +(120:1.5) and +(30:1) ..(3,2);
\draw[red,thick](6,.5).. controls +(90:4) and +(90:4) ..(0,0);
\draw(3,3.7)node{$\eta_2'$}(4,2)node[above]{$\eta_2$};

\draw(0,0)node[white] {$\bullet$} node[red]{$\circ$}node[left]{}
     (3,2)node[white] {$\bullet$} node[red]{$\circ$}node[above]{}
     (6,0)node[white] {$\bullet$} node[red]{$\circ$}node[right]{$Z'$}
     (6,0.5)node[white] {$\bullet$} node[red]{$\circ$}node[right]{$Z''$}
     (4.8,1)node {$\alpha$}(1.5,1.3)node {$\beta$}(3,0.9)node[below]
     {$\eta_1$};
\end{tikzpicture}
\caption{$\eta_1'=\alpha$ and $\eta_2'$}
\label{fig:BT beta}
\end{figure}
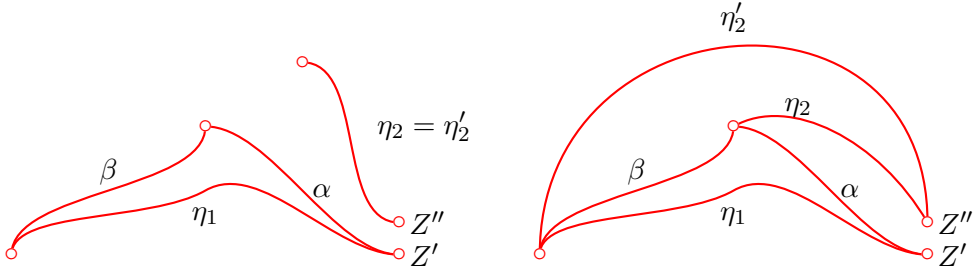

\section{Further studies}\label{sec:ss}
\subsection{Algebraic twist group of quivers with potential}\label{app:ATG}
Let $(Q,W)$ be a rigid quiver with potential such that
there is no double arrow in $Q$ and $W$ is the sum of some cycles in $Q$.

\begin{definition}\label{def:bg}
The \emph{algebraic twist group} $\AT(Q,W)$ of such a quiver with potential $(Q,W)$
is the group generated by $\{t_i\mid i\in Q_0\}$ subject to the relations
\numbers
  \item $t_i t_j= t_j t_i$ if there is no arrow between $i$ and $j$ in $Q$,
  \item $t_i t_j t_i = t_j t_i t_j$ if there is exactly one arrow between $i$ and $j$ in $Q$,
  \item $R_i=R_j$ for any $i,j$ (cyclic relation),
  if there is a cycle $Y\colon1 \to 2 \to \cdots \to m \to 1$ in $Q$ (or a term in $W$ by definition),
  where $R_i=t_i t_{i+1} \cdots t_{2m+i-3}$ with convention $k=m+k$ here.
\ends
\end{definition}

First, we show that any cyclic relations in Definition~\ref{def:bg},
that correspond to the same cycle $Y$, are equivalent to each other.

\begin{lemma}\label{lem:rel}
Let $m\geq3$ and suppose that $t_1,t_2, \cdots, t_m$ satisfy the relations
\begin{gather}\label{eq:relss}
\begin{cases}
    t_j t_i t_j=t_i t_j t_i,  &  |j-i|=1\text{ or } \{i,j\}=\{1,m\},\\
    t_i t_j=t_j t_i,  &  \text{otherwise}.
\end{cases}
\end{gather}
Let $k=m+k$ and $R_i=t_i t_{i+1} \cdots t_{2m+i-3}$.
Then the relation $R_1=R_2$ is equivalent to $R_1=R_i$ for any $3\leq i\leq m$.
\end{lemma}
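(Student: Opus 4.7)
The plan is to exploit the manifest cyclic symmetry of the defining relations \eqref{eq:relss}. Let $G$ denote the abstract group generated by $t_1,\dots,t_m$ subject to those relations, and observe that the relation set is literally invariant under the rotation $i\mapsto i+1$ of indices in $\ZZ/m\ZZ$: adjacency in the $m$-cycle (which picks up the wrap-around pair $\{1,m\}$) is preserved, and so is non-adjacency, hence so are the commutation relations. It follows that the assignment $t_i\mapsto t_{i+1}$ (indices taken mod $m$) extends uniquely to a genuine automorphism $\phi$ of $G$ of order $m$.

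The key identity is that $\phi$ shifts the $R_i$'s by one: directly from $R_i=t_it_{i+1}\cdots t_{2m+i-3}$ and $\phi(t_k)=t_{k+1}$, one reads off
\[
\phi(R_i)=t_{i+1}t_{i+2}\cdots t_{2m+i-2}=R_{i+1},
\]
where indices are mod $m$.

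With this identity in hand, the equivalence follows by a one-line chain argument. For the forward implication, assume $R_1=R_2$ holds in $G$; applying the automorphism $\phi^{k-1}$ to both sides is a legitimate operation and yields $R_k=R_{k+1}$ in $G$ for every $k$, so transitivity gives $R_1=R_2=R_3=\cdots=R_m$, whence $R_1=R_i$ for every $3\le i\le m$. For the reverse implication, assume $R_1=R_i$ holds in $G$ for every $3\le i\le m$; taking $i=m$ and applying $\phi$ yields $R_2=R_{m+1}=R_1$, as required.

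There is essentially no obstacle beyond recognizing the rotation as an \emph{automorphism of $G$ itself} rather than of some quotient; this is what makes the $\phi$-iteration argument non-circular, since applying $\phi$ to a valid equation in $G$ produces another valid equation in $G$. Everything else is formal manipulation, and the same strategy would go through verbatim for any relator $R_i$ on a cyclically symmetric presentation.
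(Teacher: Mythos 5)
Your key step—``applying the automorphism $\phi^{k-1}$ to both sides''—does not apply to the situation the lemma is about, and this is a genuine gap rather than a presentational one. The rotation $t_i\mapsto t_{i+1}$ is an automorphism only of the \emph{universal} group $G$ presented by the relations \eqref{eq:relss}. The lemma, however, is about arbitrary elements $t_1,\dots,t_m$ of an arbitrary ambient group satisfying \eqref{eq:relss} (in the intended application this ambient group is the algebraic twist group $\AT(Q,W)$ of Definition~\ref{def:bg}, whose full quiver has no cyclic symmetry at all); equivalently, it is the assertion that $R_1R_i^{-1}$ lies in the normal closure of $R_1R_2^{-1}$ in $G$, and conversely. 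Read literally, your forward implication assumes $R_1=R_2$ holds \emph{in $G$ itself}, where it does not (and if it did, there would be nothing to prove); read charitably, you are working in the quotient $G/\langle\langle R_1R_2^{-1}\rangle\rangle$, but then ``applying $\phi$'' requires $\phi$ to descend to that quotient, i.e.\ the normal closure of $R_1R_2^{-1}$ to be $\phi$-invariant. Since $\phi(R_1R_2^{-1})=R_2R_3^{-1}$, that invariance is exactly the statement that $R_2=R_3$ is a consequence of \eqref{eq:relss} together with $R_1=R_2$ — which is what you are trying to prove. So the argument is circular (the same objection applies to your reverse implication). The identity $\phi(R_i)=R_{i+1}$ is correct, but by itself it only shows that the normal closures of the various $R_iR_{i+1}^{-1}$ are permuted by $\phi$, not that they coincide.

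What is actually needed is a computation inside the ambient group using only \eqref{eq:relss}: the paper verifies the unconditional quasi-commutation identities $t_iR_1=R_1t_{i-2}$ for $i=2,\dots,m-1$ and $t_iR_{i+1}=R_it_{i-2}$ for $i=3,\dots,m$ (indices mod $m$), and then cancellation gives
\[
R_1=R_i \;\Longleftrightarrow\; R_1t_{i-2}=R_it_{i-2} \;\Longleftrightarrow\; t_iR_1=t_iR_{i+1} \;\Longleftrightarrow\; R_1=R_{i+1},
\]
for $i=2,\dots,m-1$, which chains to the full equivalence and is valid for any elements of any group satisfying \eqref{eq:relss}. If you want to keep a symmetry-flavoured argument, you must first prove such identities (or otherwise show the relevant normal closure is rotation-invariant); the symmetry of the presentation alone cannot substitute for them.
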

\begin{proof}
By the relations in \eqref{eq:relss}, it is straightforward to check the following
\[\begin{array}{rll}
  t_i R_1=R_1 t_{i-2}, \quad &i=2,\cdots,m-1.\\
  t_i R_{i+1}=R_i t_{i-2}, \quad &i=3,\cdots, m.
\end{array}
\]
Then we have
\[\begin{array}{rll}
    R_1=R_i & \Longleftrightarrow\; R_1 t_{i-2}= R_i t_{i-2}\\
    &\Longleftrightarrow\; t_1 R_1= t_i R_{i+1}\\
    &\Longleftrightarrow\; R_1=R_{i+1}
\end{array}
\]
for any $i=2,\cdots, m-1$, which implies the lemma.
\end{proof}

A consequence of Lemma~\ref{lem:rel} is
\[
    R_i=R_j \;\Longleftrightarrow\; R_k=R_l
\]
provided $i\neq j$ and $k\neq l$.

The following result was originally in \cite{KQ1} for type $A$ and $D$,
which is also independently obtained by Grant-Marsh for all Dynkin types.
\begin{proposition}\label{pp:GM}
If $(Q,W)$ is mutation-equivalent to a Dynkin diagram $\underline{Q}$,
then the algebraic twist group $\AT(Q,W)$ is isomorphic to
the corresponding braid group $\Br({\underline{Q}})$.
\end{proposition}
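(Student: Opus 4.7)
The plan is to induct on the mutation distance from the Dynkin quiver $\underline{Q}$. For the base case $(Q,W) = (\underline{Q}, 0)$, the Dynkin quiver is a tree so contains no oriented cycles; hence $W=0$, no cyclic relations of type $3^\circ$ appear in Definition~\ref{def:bg}, and the remaining relations $1^\circ,2^\circ$ are exactly the Artin presentation of $\Br(\underline{Q})$.

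For the inductive step, I would show that for each vertex $k$ of $(Q,W)$ the DWZ mutation $(Q',W') = \mu_k(Q,W)$ induces an isomorphism $\Phi_k\colon \AT(Q',W') \xrightarrow{\sim} \AT(Q,W)$, defined on generators by formulas modeled on the simple-tilting rule for spherical twists used in Remark~\ref{rem:general} (cf. \cite[Proposition~5.4]{KQ}):
\[
\Phi_k(t'_k) = t_k^{-1}, \qquad \Phi_k(t'_i) = t_k\, t_i\, t_k^{-1} \text{ for every arrow } i \to k \text{ in } Q,
\]
and $\Phi_k(t'_i) = t_i$ otherwise. Since DWZ mutation is an involution and the analogous backward formulas furnish a two-sided inverse at the level of generators, it suffices to verify that this assignment respects all defining relations of $\AT(Q',W')$.

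The verification splits into three groups. The braid and commutation relations $1^\circ,2^\circ$ of $\AT(Q',W')$ not involving $k$ are checked by a short local analysis: DWZ mutation alters only arrow multiplicities among neighbors of $k$, and each new adjacency (or non-adjacency) in $Q'$ is matched by a braid (or commutation) identity in $\AT(Q,W)$, using the identity $(t_k t_i t_k^{-1})(t_k t_j t_k^{-1}) = t_k(t_i t_j)t_k^{-1}$. Relations involving $k$ itself reduce, after absorbing $t_k^{\pm 1}$, to the braid/commutation relations at $k$ in $\AT(Q,W)$. For the cyclic relations $3^\circ$ of $W'$, Lemma~\ref{lem:rel} is crucial: for each cycle $Y'$ of length $m$ in $W'$, only one instance $R_1 = R_2$ need be checked rather than all $R_i = R_j$.

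The main obstacle is this last group. New cycles in $W'$ arise from three sources under DWZ mutation: old cycles of $W$ disjoint from $k$ (which survive unchanged and pose no new difficulty), compositions $i \to k \to j$ in $Q$ paired with the mutation-created arrow $j \to i$ in $Q'$ (yielding genuinely new 3-cycles in $W'$), and subtler 2-cycle cancellations with pre-existing cycles of $W$ meeting $k$. In each case one must reduce the single cyclic relation $R_1 = R_2$ associated to $Y'$ (via Lemma~\ref{lem:rel}) to a consequence of the cyclic relations of $W$ on adjacent old cycles combined with braid and commutation identities in $\AT(Q,W)$. Once this combinatorial core is settled, iterating $\Phi_k$ along any mutation sequence from $(Q,W)$ back to $(\underline{Q},0)$ chains together to give the required isomorphism $\AT(Q,W) \cong \Br(\underline{Q})$.
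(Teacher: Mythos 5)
Two points. First, on the relation to the paper: Proposition~\ref{pp:GM} is not proved in this paper at all --- it is quoted, with the type $A$/$D$ cases attributed to \cite{KQ1} and the general Dynkin case to Grant--Marsh; the only ingredient supplied here is Lemma~\ref{lem:rel}, which you do invoke correctly. So your plan (show the presentation of Definition~\ref{def:bg} is invariant under DWZ mutation and induct on the mutation distance to $\underline{Q}$) is exactly the strategy of those references, but what you have written is an outline rather than a proof: the step you yourself flag as ``the main obstacle'' --- verifying that each cyclic relation $R_1=R_2$ attached to a cycle of $W'$ (new $3$-cycles through $k$, cycles surviving $2$-cycle cancellation, etc.) maps under $\Phi_k$ to a consequence of the cycle, braid and commutation relations of $\AT(Q,W)$, and conversely --- is precisely the nontrivial combinatorial content of the cited proofs, and it is left entirely unverified. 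Until that reduction is actually carried out, nothing has been proved.

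Second, a concrete error in the formulas you propose: the assignment $\Phi_k(t'_k)=t_k^{-1}$, $\Phi_k(t'_i)=t_k t_i t_k^{-1}$ for $i$ adjacent to $k$, does not even respect the braid relation between $t'_i$ and $t'_k$ (note $i$ and $k$ remain adjacent after mutation). Indeed, in the abelianization the image of $t'_it'_kt'_i$ has exponent sum $+1$ while the image of $t'_kt'_it'_k$ has exponent sum $-1$, so the relation cannot hold for any choice of conjugates. The tilting rule you appeal to (Remark~\ref{rem:general}, \cite[Proposition~5.4]{KQ}) gives the new simple at the mutated vertex as a \emph{shift} $S_k[1]$, and since $\phi_{S[1]}=\phi_S$ the induced map on twists fixes the generator at $k$: the correct assignment is $\Phi_k(t'_k)=t_k$ and $\Phi_k(t'_i)=t_k^{\mp1} t_i t_k^{\pm1}$ for the relevant arrows (sign depending on forward/backward mutation), identity on the remaining generators. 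With the formula as you stated it the inductive step fails at the very first relation check, so both the generator map and the cycle-relation verification need to be redone before the induction can be chained to give $\AT(Q,W)\cong\Br(\underline{Q})$.
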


\begin{remark}
We believe that the proposition above also holds for the affine Dynkin case,
as long as $Q$ does not have double arrows.
The point is, one should be able to define an algebraic twist group
for a (good) quiver with potential,
which provides a presentation
of the corresponding spherical twist group (or/and Dehn twist group).
\end{remark}

\subsection{Intersection formulae}\label{sec:int}
Interpreting the intersection formulae between open (resp. closed) arcs
as dimension of $\Hom$ (resp. $\Ext$) play a crucial role
in many proofs (e.g.
in \cite{KS} 
and in \cite{QZ}). 
We have the following conjecture.

\begin{conjecture}\label{con0}
Let $\alpha,\beta\in\cA(\surfo)$.
We have
\begin{gather}\label{eq:con0}
    \dim\Hom^\bullet(\widetilde{X}(\alpha),\widetilde{X}(\beta))=2\Int(\alpha,\beta).
\end{gather}
\end{conjecture}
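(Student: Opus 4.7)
The plan is to prove Conjecture~\ref{con0} by a strategy modeled on that of Proposition~\ref{pp:key}: first reduce one of the two arcs to a dual arc in $\TT_0^*$ via the braid twist action, and then induct on the intersection number of the other arc with $\TT_0$.

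First I would apply Proposition~\ref{pp:oa} to write $\alpha = b(s_i)$ for some $b \in \BT(\surfo)$ and $s_i \in \TT_0^*$. The compatibility \eqref{eq:actions+} gives $\widetilde X(\alpha) = \iota(b)\widetilde X(s_i)$ and $\widetilde X(\beta) = \iota(b)\widetilde X(b^{-1}\beta)$, so the autoequivalence $\iota(b)$ provides an isomorphism
\begin{gather*}
\Hom^\bullet(\widetilde X(\alpha), \widetilde X(\beta)) \cong \Hom^\bullet(S_i, X_{b^{-1}\beta}).
\end{gather*}
Combined with the $\MCG(\surfo)$-invariance of the geometric intersection number, this reduces the statement to showing
\begin{gather*}
\dim\Hom^\bullet(S_i, X_\beta) = 2\Int(s_i, \beta)
\end{gather*}
for every $\beta \in \cA(\surfo)$ and every $i$.

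I would then induct on $\Int(\TT_0,\beta)$. For the base case $\beta \in \TT_0^*$, direct inspection of the Ext quiver $\Q{\h_0}$ suffices: sphericalness yields $\dim\Hom^\bullet(S_i, S_i) = 2 = 2\Int(s_i, s_i)$; for distinct $s_i, s_j$, the assumption that $Q_{\TT_0}$ has no double arrows forces $\Int_\Tri(s_i, s_j) \leq 1$, and a shared endpoint corresponds to a single arrow in the Ext quiver (of degree $1$ or $2$) which, paired with its 3-Calabi-Yau Serre dual, yields $\dim\Hom^\bullet(S_i, S_j) = \Int_\Tri(s_i, s_j) = 2\Int(s_i, s_j)$. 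For the inductive step, apply Lemma~\ref{lem:dcp} to decompose $\beta$ into $\beta', \beta'' \in \cA(\surfo)$ meeting at a decorating point $Z_0$ and bounding a contractible triangle with $\beta$; Proposition~\ref{pp:ses2} then supplies the non-trivial exact triangle
\begin{gather*}
X_{\beta'} \to X_\beta \to X_{\beta''} \to X_{\beta'}[1].
\end{gather*}
Applying $\Hom(S_i,-)$ yields a long exact sequence, the inductive hypothesis pins down the dimensions of $\Hom^\bullet(S_i, X_{\beta'})$ and $\Hom^\bullet(S_i, X_{\beta''})$, and the goal reduces to computing the rank $r$ of the connecting map $\delta \colon \Hom^\bullet(S_i, X_{\beta''}) \to \Hom^\bullet(S_i, X_{\beta'})[1]$ and verifying
\begin{gather*}
r = \Int(s_i,\beta') + \Int(s_i,\beta'') - \Int(s_i,\beta).
\end{gather*}

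The main obstacle is the precise determination of $r$. Proposition~\ref{pp:qz2} identifies each nonzero $\zeta$-morphism in $\Hom^\bullet(S_i, X_{\beta'})$ or $\Hom^\bullet(S_i, X_{\beta''})$ coming from a shared endpoint with a unique generator whose composition behavior at $Z_0$ dictates exactly when a pair cancels under $\delta$. A case analysis parallel to the one in \S\ref{app:B} (sub-cases according to whether $s_i$ is disjoint from $Z_0$, shares $Z_0$ as an endpoint, or crosses an edge of the triangle $\beta \cup \beta' \cup \beta''$) must then match every interior intersection of $s_i$ with $\beta$ to a pair of surviving generators (accounting for the factor of $2$ from 3-Calabi-Yau Serre duality) and every endpoint intersection to a single surviving generator, so that the cancellations produce precisely the half-integer corrections bundled into $\Int(\alpha,\beta) = \tfrac{1}{2}\Int_\Tri(\alpha,\beta) + \Int_{\surfo-\Tri}(\alpha,\beta)$. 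Making this bookkeeping rigorous in the presence of further intersections of $s_i$ with $\beta'$ or $\beta''$ away from $Z_0$ is the technical heart of the argument, and is what the forthcoming sequel carries out systematically.
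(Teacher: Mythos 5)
The statement you are trying to prove is not proved in this paper at all: it is stated as Conjecture~\ref{con0}, and the author explicitly defers its proof to the sequel \cite{QZ2} (DMS~II). Your proposal reflects this honestly, but as written it is not a proof. The reduction via Proposition~\ref{pp:oa} and \eqref{eq:actions+} to the case $\alpha=s_i\in\TT_0^*$ is fine (intersection numbers are $\MCG(\surfo)$-invariant and $\iota(b)$ is an equivalence), the base case $\beta\in\TT_0^*$ is fine, and the inductive setup via Lemma~\ref{lem:dcp} and the triangle of Proposition~\ref{pp:ses2} mirrors the proof of Proposition~\ref{pp:key}. But the decisive step --- computing the rank $r$ of the connecting map in the long exact sequence and showing it equals $\Int(s_i,\beta')+\Int(s_i,\beta'')-\Int(s_i,\beta)$ --- is exactly the content of the conjecture, and you leave it to ``the forthcoming sequel.'' That is the whole difficulty: unlike the cases $\Int\in\{0,\tfrac12\}$ treated in Proposition~\ref{pp:key}, where dimension counts of $0$ or $1$ plus one isomorphism forced by Proposition~\ref{pp:qz2} suffice, in general the inductive hypothesis only gives you \emph{dimensions} of $\Hom^\bullet(S_i,X_{\beta'})$ and $\Hom^\bullet(S_i,X_{\beta''})$, not a basis of morphisms with known compositions, so the rank of $\delta$ is not determined by the data the induction carries.

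Moreover, the tools available in this paper cannot close that gap. Proposition~\ref{pp:qz2} only describes morphisms induced by a shared decorating point; it says nothing about the (two-dimensional, CY-dual paired) contributions coming from interior intersections of $s_i$ with $\beta'$ or $\beta''$, nor about how such morphisms interact with the maps in the triangle \eqref{eq:tri}. One also needs an a priori inequality such as $\Int(s_i,\beta)\le\Int(s_i,\beta')+\Int(s_i,\beta'')$ together with an exact matching of each cancellation in the long exact sequence with a lost geometric intersection, and controlling this when $s_i$ meets the triangle bounded by $\beta,\beta',\beta''$ in its interior requires the full string-model morphism calculus developed in \cite{QZ2} (where both \eqref{eq:con0} and \eqref{eq:dimHom+} are proved). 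So your outline is a reasonable strategy, consistent in spirit with how the paper handles the special cases, but the technical heart is missing and cannot be supplied from the results proved here.
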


Moreover, we have another conjectured formula.
\begin{conjecture}\label{con1}
Denote by $\oAp(\surfo)$ the set of open arcs that appear in triangulations in $\EGp(\surfo)$.
Then there is a map $\rho\colon\oAp(\surfo)\to\per\Gamma_0$,
such that any $\eta\in\cA(\surfo)$,
\begin{gather}\label{eq:dimHom+}
    \dim\Hom^\bullet(\rho(\gamma),\widetilde{X}(\eta))=\Int(\gamma,\eta).
\end{gather}
\end{conjecture}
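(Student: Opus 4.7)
My plan is to construct $\rho$ inductively, assigning a silting object in $\per\Gamma_0$ to each triangulation in $\EGp(\surfo)$, with the indecomposable summands playing the role of $\rho$ on the open arcs of that triangulation. On the initial triangulation $\TT_0$, set $\rho(\gamma_i):=\Gamma^i=e_i\Gamma_0$, so that the silting object attached to $\TT_0$ is $\Gamma_0=\bigoplus_i\Gamma^i$ itself; Lemma~\ref{lem:Hom} already furnishes \eqref{eq:dimHom+} for every $\gamma\in\TT_0$ and every $\eta\in\cA(\surfo)$, so the base case is free.

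Next I would extend $\rho$ to all of $\oAp(\surfo)$ by induction on the number of forward flips from $\TT_0$. Given a triangulation $\TT$ on which $\rho$ is already defined and a forward flip $\TT^\sharp=\tilt{\TT}{\sharp}{\gamma}$, define $\rho(\gamma^\sharp)$ by forward silting mutation of the summand $\rho(\gamma)$ inside $T_\TT:=\bigoplus_{\gamma'\in\TT}\rho(\gamma')$: it fits into a distinguished triangle
\begin{equation}\label{eq:silt-mut}
\rho(\gamma)\xrightarrow{\;f\;} E\to\rho(\gamma^\sharp)\to\rho(\gamma)[1],
\end{equation}
where $f$ is a minimal left $\add(T_\TT/\rho(\gamma))$-approximation and the summand multiplicities of $E$ are read off from the arrows at $\gamma$ in the FST quiver $Q_\TT$. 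Well-definedness (independence of the chosen flip sequence) should follow from a canonical isomorphism $\EGp(\surfo)\cong\SEGp(\per\Gamma_0)$, lifting the square/pentagon relations for forward flips to the corresponding silting mutation relations in $\per\Gamma_0$ established in \cite{KQ}; this identification is precisely the content promised in the second part of \textbf{DMS} (Appendix~\ref{app}), so I would appeal to it.

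For the intersection formula \eqref{eq:dimHom+}, I would proceed by induction on the flip distance from $\TT_0$. Applying $\Hom^\bullet(-,X_\eta)$ to \eqref{eq:silt-mut} yields a long exact sequence whose outer terms have, by the inductive hypothesis, dimensions $\Int(\gamma,\eta)$ and $\sum\Int(\gamma',\eta)$ (summed with multiplicities over the summands of $E$). What remains is to show that the long exact sequence collapses into short exact sequences, giving
\[
\dim\Hom^\bullet(\rho(\gamma^\sharp),X_\eta)=\dim\Hom^\bullet(E,X_\eta)-\dim\Hom^\bullet(\rho(\gamma),X_\eta),
\]
and then to match this with a topological Ptolemy-type identity for intersection numbers under a flip inside the quadrilateral $Q_\gamma$ containing $\gamma$. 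The topological identity reduces, via a local picture, to the contribution of how $\eta$ traverses $Q_\gamma$ (through which pairs of sides), and is straightforward to tabulate.

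The main obstacle is the collapse step — ruling out spurious connecting homomorphisms in the long exact sequence above. I expect to handle it by a local case analysis at the flip quadrilateral $Q_\gamma$, using Proposition~\ref{pp:qz2} and Proposition~\ref{pp:ses2} to identify the relevant morphisms in $\per\Gamma_0$ with those induced by line segments at decorating points, and then verifying the Ptolemy identity case-by-case against how $\eta$ meets $Q_\gamma$. A cleaner alternative, which would bypass the case analysis entirely, is to first construct $\rho(\gamma)$ directly from a string-model attached to the open arc $\gamma$ (parallel to \S~\ref{sec:sm} but with projective summands in place of simples), and then derive \eqref{eq:dimHom+} by the same double-induction strategy used to prove Proposition~\ref{pp:key} in Appendix~\ref{app:B}; this is the route I would pursue if the collapse argument turned out to be delicate.
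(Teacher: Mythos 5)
This statement is labelled a Conjecture in the paper, and the paper does not prove it: it explicitly defers the two intersection formulae (Conjectures~\ref{con0} and~\ref{con1}) to the sequel \cite{QZ2}. What the paper \emph{does} establish is the existence of the map $\rho\colon\oAp(\surfo)\to\RR(\per\Gamma_\TT)$, in Theorem~\ref{thm:wp}, but via a different, indirect construction (it is pinned down as the unique element of the silting set $\wp(\TT)$ lying over $\underline\rho\circ F(\nu)$ under Amiot's quotient, not by chasing mutation triangles), and that construction says nothing about $\dim\Hom^\bullet$. So there is no in-paper proof of \eqref{eq:dimHom+} to compare against; you are proposing a proof of an open statement.

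Turning to your sketch, the base case is fine (it is Lemma~\ref{lem:Hom}), and the inductive definition of $\rho$ by forward silting mutation is morally equivalent to what Theorem~\ref{thm:wp} produces. The genuine gap is exactly the one you flag: the collapse of the long exact sequence obtained by applying $\Hom^\bullet(-,X_\eta)$ to the mutation triangle, and the linear ``Ptolemy'' identity you hope to match it against, are both false in general. Concretely, take $\eta$ to be the dual closed arc $\gamma^*$ of $\gamma$ in $\TT$: it lives inside the quadrilateral with diagonal $\gamma$, crosses $\gamma$ once, crosses the flipped diagonal $\gamma^\sharp$ once, and does not meet any side of that quadrilateral, so
\[
    \Int(\gamma^\sharp,\eta)=1,\qquad \Int(\gamma,\eta)=1,\qquad \Int(\gamma',\eta)=0\ \text{for every summand $\rho(\gamma')$ of $E$.}
\]
Your proposed identity would give $1=0-1$, a contradiction; the resolution is that $\dim\Hom^\bullet(C,X)=\dim\Hom^\bullet(B,X)-\dim\Hom^\bullet(A,X)+2\sum_k\operatorname{rank}(\delta_k)$, and in this case there is a \emph{nonzero} connecting homomorphism of rank one. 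Detecting and controlling exactly when these $\delta_k$ do or do not vanish is the actual content of the conjecture; the same subtlety already shows up in the closed-arc/closed-arc case, which is why the proof of Proposition~\ref{pp:key} in \S\ref{app:B} has to invoke Proposition~\ref{pp:qz2} to exhibit the isomorphism \eqref{eq:iso} rather than a vanishing. Your ``cleaner alternative''---defining $\rho(\gamma)$ directly by a string model attached to $\gamma$ and running a double induction in the style of \S\ref{app:B}---is indeed the route the authors indicate for the sequel and is much more likely to close, precisely because the string model gives access to the explicit morphisms that generate the connecting maps. As written, though, your first route does not close the collapse step and would fail on the example above.
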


We will prove these two intersection formulae in \cite{QZ2}.




\end{document}